%2multibyte Version: 5.50.0.2953 CodePage: 65001

\documentclass[11pt,a4paper]{article}
%%%%%%%%%%%%%%%%%%%%%%%%%%%%%%%%%%%%%%%%%%%%%%%%%%%%%%%%%%%%%%%%%%%%%%%%%%%%%%%%%%%%%%%%%%%%%%%%%%%%%%%%%%%%%%%%%%%%%%%%%%%%%%%%%%%%%%%%%%%%%%%%%%%%%%%%%%%%%%%%%%%%%%%%%%%%%%%%%%%%%%%%%%%%%%%%%%%%%%%%%%%%%%%%%%%%%%%%%%%%%%%%%%%%%%%%%%%%%%%%%%%%%%%%%%%%
\usepackage{amsmath}
\usepackage{amsfonts}

\setcounter{MaxMatrixCols}{10}
%TCIDATA{OutputFilter=LATEX.DLL}
%TCIDATA{Version=5.50.0.2953}
%TCIDATA{Codepage=65001}
%TCIDATA{<META NAME="SaveForMode" CONTENT="1">}
%TCIDATA{BibliographyScheme=Manual}
%TCIDATA{Created=Friday, March 20, 2015 15:24:31}
%TCIDATA{LastRevised=Thursday, May 25, 2017 16:09:23}
%TCIDATA{<META NAME="GraphicsSave" CONTENT="32">}
%TCIDATA{<META NAME="DocumentShell" CONTENT="Standard LaTeX\Blank - Standard LaTeX Article">}
%TCIDATA{CSTFile=40 LaTeX article.cst}

\newtheorem{theorem}{Theorem}

\newtheorem{axiom}[theorem]{Axiom}

\newtheorem{conjecture}[theorem]{Conjecture}
\newtheorem{corollary}[theorem]{Corollary}

\newtheorem{definition}[theorem]{Definition}
\newtheorem{example}[theorem]{Example}
\newtheorem{exercise}[theorem]{Exercise}
\newtheorem{lemma}[theorem]{Lemma}

\newtheorem{proposition}[theorem]{Proposition}
\newtheorem{remark}[theorem]{Remark}

\newenvironment{proof}[1][Proof]{\noindent\textbf{#1.} }{\ \rule{0.5em}{0.5em}}
% Macros for Scientific Word and Scientific WorkPlace 5.5 documents saved with the LaTeX filter.
% Copyright (C) 2005 Mackichan Software, Inc.

\typeout{TCILATEX Macros for Scientific Word and Scientific WorkPlace 5.5 <06 Oct 2005>.}
\typeout{NOTICE:  This macro file is NOT proprietary and may be 
freely copied and distributed.}
\makeatletter

%%%%%%%%%%%%%%%%%%%%%
% pdfTeX related.
\ifx\pdfoutput\relax\let\pdfoutput=\undefined\fi
\newcount\msipdfoutput
\ifx\pdfoutput\undefined
\else
 \ifcase\pdfoutput
 \else 
    \msipdfoutput=1
    \ifx\paperwidth\undefined
    \else
      \ifdim\paperheight=0pt\relax
      \else
        \pdfpageheight\paperheight
      \fi
      \ifdim\paperwidth=0pt\relax
      \else
        \pdfpagewidth\paperwidth
      \fi
    \fi
  \fi  
\fi

%%%%%%%%%%%%%%%%%%%%%
% FMTeXButton
% This is used for putting TeXButtons in the 
% frontmatter of a document. Add a line like
% \QTagDef{FMTeXButton}{101}{} to the filter 
% section of the cst being used. Also add a
% new section containing:
%     [f_101]
%     ALIAS=FMTexButton
%     TAG_TYPE=FIELD
%     TAG_LEADIN=TeX Button:
%
% It also works to put \defs in the preamble after 
% the \input tcilatex

%
%%%%%%%%%%%%%%%%%%%%%%
% macros for time
\newcount\@hour\newcount\@minute\chardef\@x10\chardef\@xv60
\def\tcitime{
\def\@time{%
  \@minute\time\@hour\@minute\divide\@hour\@xv
  \ifnum\@hour<\@x 0\fi\the\@hour:%
  \multiply\@hour\@xv\advance\@minute-\@hour
  \ifnum\@minute<\@x 0\fi\the\@minute
  }}%

%%%%%%%%%%%%%%%%%%%%%%
% macro for hyperref and msihyperref
%\@ifundefined{hyperref}{\def\hyperref#1#2#3#4{#2\ref{#4}#3}}{}

\def\x@hyperref#1#2#3{%
   % Turn off various catcodes before reading parameter 4
   \catcode`\~ = 12
   \catcode`\$ = 12
   \catcode`\_ = 12
   \catcode`\# = 12
   \catcode`\& = 12
   \catcode`\% = 12
   \y@hyperref{#1}{#2}{#3}%
}

\def\y@hyperref#1#2#3#4{%
   #2\ref{#4}#3
   \catcode`\~ = 13
   \catcode`\$ = 3
   \catcode`\_ = 8
   \catcode`\# = 6
   \catcode`\& = 4
   \catcode`\% = 14
}

\@ifundefined{hyperref}{\let\hyperref\x@hyperref}{}
\@ifundefined{msihyperref}{\let\msihyperref\x@hyperref}{}

% macro for external program call
\@ifundefined{qExtProgCall}{\def\qExtProgCall#1#2#3#4#5#6{\relax}}{}
%%%%%%%%%%%%%%%%%%%%%%
%
% macros for graphics
%
%
%
\def\QCTOpt[#1]#2{%
  \def\QCTOptB{#1}
  \def\QCTOptA{#2}
}
\def\QCTNOpt#1{%
  \def\QCTOptA{#1}
  \let\QCTOptB\empty
}
\def\Qct{%
  \@ifnextchar[{%
    \QCTOpt}{\QCTNOpt}
}
\def\QCBOpt[#1]#2{%
  \def\QCBOptB{#1}%
  \def\QCBOptA{#2}%
}
\def\QCBNOpt#1{%
  \def\QCBOptA{#1}%
  \let\QCBOptB\empty
}
\def\Qcb{%
  \@ifnextchar[{%
    \QCBOpt}{\QCBNOpt}%
}
\def\PrepCapArgs{%
  \ifx\QCBOptA\empty
    \ifx\QCTOptA\empty
      {}%
    \else
      \ifx\QCTOptB\empty
        {\QCTOptA}%
      \else
        [\QCTOptB]{\QCTOptA}%
      \fi
    \fi
  \else
    \ifx\QCBOptA\empty
      {}%
    \else
      \ifx\QCBOptB\empty
        {\QCBOptA}%
      \else
        [\QCBOptB]{\QCBOptA}%
      \fi
    \fi
  \fi
}
\newcount\GRAPHICSTYPE
%\GRAPHICSTYPE 0 is for TurboTeX
%\GRAPHICSTYPE 1 is for DVIWindo (PostScript)
%%%(removed)%\GRAPHICSTYPE 2 is for psfig (PostScript)
\GRAPHICSTYPE=\z@
\def\GRAPHICSPS#1{%
 \ifcase\GRAPHICSTYPE%\GRAPHICSTYPE=0
   \special{ps: #1}%
 \or%\GRAPHICSTYPE=1
   \special{language "PS", include "#1"}%
%%%\or%\GRAPHICSTYPE=2
%%%  #1%
 \fi
}%
%
%
%
% \graffile{ body }                                  %#1
%          { contentswidth (scalar)  }               %#2
%          { contentsheight (scalar) }               %#3
%          { vertical shift when in-line (scalar) }  %#4

\def\graffile#1#2#3#4{%
%%% \ifnum\GRAPHICSTYPE=\tw@
%%%  %Following if using psfig
%%%  \@ifundefined{psfig}{\input psfig.tex}{}%
%%%  \psfig{file=#1, height=#3, width=#2}%
%%% \else
  %Following for all others
  % JCS - added BOXTHEFRAME, see below
    \bgroup
	   \@inlabelfalse
       \leavevmode
       \@ifundefined{bbl@deactivate}{\def~{\string~}}{\activesoff}%
        \raise -#4 \BOXTHEFRAME{%
           \hbox to #2{\raise #3\hbox to #2{\null #1\hfil}}}%
    \egroup
}%
%
% A box for drafts
\def\draftbox#1#2#3#4{%
 \leavevmode\raise -#4 \hbox{%
  \frame{\rlap{\protect\tiny #1}\hbox to #2%
   {\vrule height#3 width\z@ depth\z@\hfil}%
  }%
 }%
}%
\newcount\@msidraft
\@msidraft=\z@
\let\nographics=\@msidraft
\newif\ifwasdraft
\wasdraftfalse

%  \GRAPHIC{ body }                                  %#1
%          { draft name }                            %#2
%          { contentswidth (scalar)  }               %#3
%          { contentsheight (scalar) }               %#4
%          { vertical shift when in-line (scalar) }  %#5
\def\GRAPHIC#1#2#3#4#5{%
   \ifnum\@msidraft=\@ne\draftbox{#2}{#3}{#4}{#5}%
   \else\graffile{#1}{#3}{#4}{#5}%
   \fi
}
\def\addtoLaTeXparams#1{%
    \edef\LaTeXparams{\LaTeXparams #1}}%
%
% JCS -  added a switch BoxFrame that can 
% be set by including X in the frame params.
% If set a box is drawn around the frame.

\newif\ifBoxFrame \BoxFramefalse
\newif\ifOverFrame \OverFramefalse
\newif\ifUnderFrame \UnderFramefalse

\def\BOXTHEFRAME#1{%
   \hbox{%
      \ifBoxFrame
         \frame{#1}%
      \else
         {#1}%
      \fi
   }%
}

\def\doFRAMEparams#1{\BoxFramefalse\OverFramefalse\UnderFramefalse\readFRAMEparams#1\end}%
\def\readFRAMEparams#1{%
 \ifx#1\end%
  \let\next=\relax
  \else
  \ifx#1i\dispkind=\z@\fi
  \ifx#1d\dispkind=\@ne\fi
  \ifx#1f\dispkind=\tw@\fi
  \ifx#1t\addtoLaTeXparams{t}\fi
  \ifx#1b\addtoLaTeXparams{b}\fi
  \ifx#1p\addtoLaTeXparams{p}\fi
  \ifx#1h\addtoLaTeXparams{h}\fi
  \ifx#1X\BoxFrametrue\fi
  \ifx#1O\OverFrametrue\fi
  \ifx#1U\UnderFrametrue\fi
  \ifx#1w
    \ifnum\@msidraft=1\wasdrafttrue\else\wasdraftfalse\fi
    \@msidraft=\@ne
  \fi
  \let\next=\readFRAMEparams
  \fi
 \next
 }%
%
%Macro for In-line graphics object
%   \IFRAME{ contentswidth (scalar)  }               %#1
%          { contentsheight (scalar) }               %#2
%          { vertical shift when in-line (scalar) }  %#3
%          { draft name }                            %#4
%          { body }                                  %#5
%          { caption}                                %#6

\def\IFRAME#1#2#3#4#5#6{%
      \bgroup
      \let\QCTOptA\empty
      \let\QCTOptB\empty
      \let\QCBOptA\empty
      \let\QCBOptB\empty
      #6%
      \parindent=0pt
      \leftskip=0pt
      \rightskip=0pt
      \setbox0=\hbox{\QCBOptA}%
      \@tempdima=#1\relax
      \ifOverFrame
          % Do this later
          \typeout{This is not implemented yet}%
          \show\HELP
      \else
         \ifdim\wd0>\@tempdima
            \advance\@tempdima by \@tempdima
            \ifdim\wd0 >\@tempdima
               \setbox1 =\vbox{%
                  \unskip\hbox to \@tempdima{\hfill\GRAPHIC{#5}{#4}{#1}{#2}{#3}\hfill}%
                  \unskip\hbox to \@tempdima{\parbox[b]{\@tempdima}{\QCBOptA}}%
               }%
               \wd1=\@tempdima
            \else
               \textwidth=\wd0
               \setbox1 =\vbox{%
                 \noindent\hbox to \wd0{\hfill\GRAPHIC{#5}{#4}{#1}{#2}{#3}\hfill}\\%
                 \noindent\hbox{\QCBOptA}%
               }%
               \wd1=\wd0
            \fi
         \else
            \ifdim\wd0>0pt
              \hsize=\@tempdima
              \setbox1=\vbox{%
                \unskip\GRAPHIC{#5}{#4}{#1}{#2}{0pt}%
                \break
                \unskip\hbox to \@tempdima{\hfill \QCBOptA\hfill}%
              }%
              \wd1=\@tempdima
           \else
              \hsize=\@tempdima
              \setbox1=\vbox{%
                \unskip\GRAPHIC{#5}{#4}{#1}{#2}{0pt}%
              }%
              \wd1=\@tempdima
           \fi
         \fi
         \@tempdimb=\ht1
         %\advance\@tempdimb by \dp1
         \advance\@tempdimb by -#2
         \advance\@tempdimb by #3
         \leavevmode
         \raise -\@tempdimb \hbox{\box1}%
      \fi
      \egroup%
}%
%
%Macro for Display graphics object
%   \DFRAME{ contentswidth (scalar)  }               %#1
%          { contentsheight (scalar) }               %#2
%          { draft label }                           %#3
%          { name }                                  %#4
%          { caption}                                %#5
\def\DFRAME#1#2#3#4#5{%
  \vspace\topsep
  \hfil\break
  \bgroup
     \leftskip\@flushglue
	 \rightskip\@flushglue
	 \parindent\z@
	 \parfillskip\z@skip
     \let\QCTOptA\empty
     \let\QCTOptB\empty
     \let\QCBOptA\empty
     \let\QCBOptB\empty
	 \vbox\bgroup
        \ifOverFrame 
           #5\QCTOptA\par
        \fi
        \GRAPHIC{#4}{#3}{#1}{#2}{\z@}%
        \ifUnderFrame 
           \break#5\QCBOptA
        \fi
	 \egroup
  \egroup
  \vspace\topsep
  \break
}%
%
%Macro for Floating graphic object
%   \FFRAME{ framedata f|i tbph x F|T }              %#1
%          { contentswidth (scalar)  }               %#2
%          { contentsheight (scalar) }               %#3
%          { caption }                               %#4
%          { label }                                 %#5
%          { draft name }                            %#6
%          { body }                                  %#7
\def\FFRAME#1#2#3#4#5#6#7{%
 %If float.sty loaded and float option is 'h', change to 'H'  (gp) 1998/09/05
  \@ifundefined{floatstyle}
    {%floatstyle undefined (and float.sty not present), no change
     \begin{figure}[#1]%
    }
    {%floatstyle DEFINED
	 \ifx#1h%Only the h parameter, change to H
      \begin{figure}[H]%
	 \else
      \begin{figure}[#1]%
	 \fi
	}
  \let\QCTOptA\empty
  \let\QCTOptB\empty
  \let\QCBOptA\empty
  \let\QCBOptB\empty
  \ifOverFrame
    #4
    \ifx\QCTOptA\empty
    \else
      \ifx\QCTOptB\empty
        \caption{\QCTOptA}%
      \else
        \caption[\QCTOptB]{\QCTOptA}%
      \fi
    \fi
    \ifUnderFrame\else
      \label{#5}%
    \fi
  \else
    \UnderFrametrue%
  \fi
  \begin{center}\GRAPHIC{#7}{#6}{#2}{#3}{\z@}\end{center}%
  \ifUnderFrame
    #4
    \ifx\QCBOptA\empty
      \caption{}%
    \else
      \ifx\QCBOptB\empty
        \caption{\QCBOptA}%
      \else
        \caption[\QCBOptB]{\QCBOptA}%
      \fi
    \fi
    \label{#5}%
  \fi
  \end{figure}%
 }%
%
%
%    \FRAME{ framedata f|i tbph x F|T }              %#1
%          { contentswidth (scalar)  }               %#2
%          { contentsheight (scalar) }               %#3
%          { vertical shift when in-line (scalar) }  %#4
%          { caption }                               %#5
%          { label }                                 %#6
%          { name }                                  %#7
%          { body }                                  %#8
%
%    framedata is a string which can contain the following
%    characters: idftbphxFT
%    Their meaning is as follows:
%             i, d or f : in-line, display, or floating
%             t,b,p,h   : LaTeX floating placement options
%             x         : fit contents box to contents
%             F or T    : Figure or Table. 
%                         Later this can expand
%                         to a more general float class.
%
%
\newcount\dispkind%

\def\makeactives{
  \catcode`\"=\active
  \catcode`\;=\active
  \catcode`\:=\active
  \catcode`\'=\active
  \catcode`\~=\active
}
\bgroup
   \makeactives
   \gdef\activesoff{%
      \def"{\string"}%
      \def;{\string;}%
      \def:{\string:}%
      \def'{\string'}%
      \def~{\string~}%
      %\bbl@deactivate{"}%
      %\bbl@deactivate{;}%
      %\bbl@deactivate{:}%
      %\bbl@deactivate{'}%
    }
\egroup

\def\FRAME#1#2#3#4#5#6#7#8{%
 \bgroup
 \ifnum\@msidraft=\@ne
   \wasdrafttrue
 \else
   \wasdraftfalse%
 \fi
 \def\LaTeXparams{}%
 \dispkind=\z@
 \def\LaTeXparams{}%
 \doFRAMEparams{#1}%
 \ifnum\dispkind=\z@\IFRAME{#2}{#3}{#4}{#7}{#8}{#5}\else
  \ifnum\dispkind=\@ne\DFRAME{#2}{#3}{#7}{#8}{#5}\else
   \ifnum\dispkind=\tw@
    \edef\@tempa{\noexpand\FFRAME{\LaTeXparams}}%
    \@tempa{#2}{#3}{#5}{#6}{#7}{#8}%
    \fi
   \fi
  \fi
  \ifwasdraft\@msidraft=1\else\@msidraft=0\fi{}%
  \egroup
 }%
%
% This macro added to let SW gobble a parameter that
% should not be passed on and expanded. 

\def\TEXUX#1{"texux"}

%
% Macros for text attributes:
%
%
%
%
%%%%%%%%%%%%%%%%%%%%%%%%%%%%%%%%%%%%%%%%%%%%%%%%%%%%%%%%%%%%%%%%%%%%%%%%
%
%
% macros for user - defined functions
%
%
% macro for unit names
%

%
% miscellaneous 
\long\def\QQQ#1#2{%
     \long\expandafter\def\csname#1\endcsname{#2}}%
\@ifundefined{QTP}{\def\QTP#1{}}{}
\@ifundefined{QEXCLUDE}{\def\QEXCLUDE#1{}}{}
\@ifundefined{Qlb}{}{}
\@ifundefined{Qlt}{}{}
\long\def\QQA#1#2{}%
\def\QTR#1#2{{\csname#1\endcsname {#2}}}%
  %	Add aliases for the ulem package

%
%
\def\EXPAND#1[#2]#3{}%
\def\NOEXPAND#1[#2]#3{}%
\def\LaTeXparent#1{}%
\def\ChildStyles#1{}%
\def\ChildDefaults#1{}%
\def\QTagDef#1#2#3{}%

% Constructs added with Scientific Notebook
\@ifundefined{correctchoice}{}{}
\@ifundefined{HTML}{\def\HTML#1{\relax}}{}
\@ifundefined{TCIIcon}{\def\TCIIcon#1#2#3#4{\relax}}{}
\if@compatibility
  \typeout{Not defining UNICODE  U or CustomNote commands for LaTeX 2.09.}
\else
  \providecommand{\UNICODE}[2][]{\protect\rule{.1in}{.1in}}
  \providecommand{\U}[1]{\protect\rule{.1in}{.1in}}
  
\fi

\@ifundefined{lambdabar}{
      
   }{}

%
% Macros for style editor docs
\@ifundefined{StyleEditBeginDoc}{}{}
%
% Macros for footnotes
\def\QQfnmark#1{\footnotemark}

%
% Macros for indexing.
%
\@ifundefined{TCIMAKEINDEX}{}{\makeindex}%
%
% Attempts to avoid problems with other styles
\@ifundefined{abstract}{%
 \def\abstract{%
  \if@twocolumn
   \section*{Abstract (Not appropriate in this style!)}%
   \else \small 
   \begin{center}{\bf Abstract\vspace{-.5em}\vspace{\z@}}\end{center}%
   \quotation 
   \fi
  }%
 }{%
 }%
\@ifundefined{endabstract}{\def\endabstract
  {\if@twocolumn\else\endquotation\fi}}{}%
\@ifundefined{maketitle}{\def\maketitle#1{}}{}%
\@ifundefined{affiliation}{\def\affiliation#1{}}{}%
\@ifundefined{proof}{}{}%
\@ifundefined{endproof}{}{}%
\@ifundefined{newfield}{\def\newfield#1#2{}}{}%
\@ifundefined{chapter}{\def\chapter#1{\par(Chapter head:)#1\par }%
 \newcount\c@chapter}{}%
\@ifundefined{part}{\def\part#1{\par(Part head:)#1\par }}{}%
\@ifundefined{section}{\def\section#1{\par(Section head:)#1\par }}{}%
\@ifundefined{subsection}{\def\subsection#1%
 {\par(Subsection head:)#1\par }}{}%
\@ifundefined{subsubsection}{\def\subsubsection#1%
 {\par(Subsubsection head:)#1\par }}{}%
\@ifundefined{paragraph}{\def\paragraph#1%
 {\par(Subsubsubsection head:)#1\par }}{}%
\@ifundefined{subparagraph}{\def\subparagraph#1%
 {\par(Subsubsubsubsection head:)#1\par }}{}%
%%%%%%%%%%%%%%%%%%%%%%%%%%%%%%%%%%%%%%%%%%%%%%%%%%%%%%%%%%%%%%%%%%%%%%%%
% These symbols are not recognized by LaTeX
\@ifundefined{therefore}{}{}%
\@ifundefined{backepsilon}{}{}%
\@ifundefined{yen}{}{}%
\@ifundefined{registered}{%
   \def\registered{\relax\ifmmode{}\r@gistered
                    \else$\m@th\r@gistered$\fi}%
 \def\r@gistered{^{\ooalign
  {\hfil\raise.07ex\hbox{$\scriptstyle\rm\text{R}$}\hfil\crcr
  \mathhexbox20D}}}}{}%
\@ifundefined{Eth}{}{}%
\@ifundefined{eth}{}{}%
\@ifundefined{Thorn}{}{}%
\@ifundefined{thorn}{}{}%
% A macro to allow any symbol that requires math to appear in text
%
\@ifundefined{degree}{}{}%
%
% macros for T3TeX files
\newdimen\theight
\@ifundefined{Column}{\def\Column{%
 \vadjust{\setbox\z@=\hbox{\scriptsize\quad\quad tcol}%
  \theight=\ht\z@\advance\theight by \dp\z@\advance\theight by \lineskip
  \kern -\theight \vbox to \theight{%
   \rightline{\rlap{\box\z@}}%
   \vss
   }%
  }%
 }}{}%
\@ifundefined{qed}{\def\qed{%
 \ifhmode\unskip\nobreak\fi\ifmmode\ifinner\else\hskip5\p@\fi\fi
 \hbox{\hskip5\p@\vrule width4\p@ height6\p@ depth1.5\p@\hskip\p@}%
 }}{}%
\@ifundefined{cents}{}{}%
\@ifundefined{tciLaplace}{}{}%
\@ifundefined{tciFourier}{}{}%
\@ifundefined{textcurrency}{}{}%
\@ifundefined{texteuro}{}{}%
\@ifundefined{euro}{}{}%
\@ifundefined{textfranc}{}{}%
\@ifundefined{textlira}{}{}%
\@ifundefined{textpeseta}{}{}%
\@ifundefined{miss}{\def\miss{\hbox{\vrule height2\p@ width 2\p@ depth\z@}}}{}%
\@ifundefined{vvert}{}{}%  %always translated to \left| or \right|
\@ifundefined{tcol}{\def\tcol#1{{\baselineskip=6\p@ \vcenter{#1}} \Column}}{}%
\@ifundefined{dB}{}{}%        %dummy entry in column 
\@ifundefined{mB}{}{}%   %column entry
\@ifundefined{nB}{}{}%     %column entry (not math)
\@ifundefined{note}{}{}%
\def\newfmtname{LaTeX2e}
% No longer load latexsym.  This is now handled by SWP, which uses amsfonts if necessary
%
\ifx\fmtname\newfmtname
  \DeclareOldFontCommand{\rm}{\normalfont\rmfamily}{\mathrm}
  \DeclareOldFontCommand{\sf}{\normalfont\sffamily}{\mathsf}
  \DeclareOldFontCommand{\tt}{\normalfont\ttfamily}{\mathtt}
  \DeclareOldFontCommand{\bf}{\normalfont\bfseries}{\mathbf}
  \DeclareOldFontCommand{\it}{\normalfont\itshape}{\mathit}
  \DeclareOldFontCommand{\sl}{\normalfont\slshape}{\@nomath\sl}
  \DeclareOldFontCommand{\sc}{\normalfont\scshape}{\@nomath\sc}
\fi

%
% Greek bold macros
% Redefine all of the math symbols 
% which might be bolded	 - there are 
% probably others to add to this list

\def\alpha{{\Greekmath 010B}}%
\def\beta{{\Greekmath 010C}}%
\def\gamma{{\Greekmath 010D}}%
\def\delta{{\Greekmath 010E}}%
\def\epsilon{{\Greekmath 010F}}%
\def\zeta{{\Greekmath 0110}}%
\def\eta{{\Greekmath 0111}}%
\def\theta{{\Greekmath 0112}}%
\def\iota{{\Greekmath 0113}}%
\def\kappa{{\Greekmath 0114}}%
\def\lambda{{\Greekmath 0115}}%
\def\mu{{\Greekmath 0116}}%
\def\nu{{\Greekmath 0117}}%
\def\xi{{\Greekmath 0118}}%
\def\pi{{\Greekmath 0119}}%
\def\rho{{\Greekmath 011A}}%
\def\sigma{{\Greekmath 011B}}%
\def\tau{{\Greekmath 011C}}%
\def\upsilon{{\Greekmath 011D}}%
\def\phi{{\Greekmath 011E}}%
\def\chi{{\Greekmath 011F}}%
\def\psi{{\Greekmath 0120}}%
\def\omega{{\Greekmath 0121}}%
\def\varepsilon{{\Greekmath 0122}}%
\def\vartheta{{\Greekmath 0123}}%
\def\varpi{{\Greekmath 0124}}%
\def\varrho{{\Greekmath 0125}}%
\def\varsigma{{\Greekmath 0126}}%
\def\varphi{{\Greekmath 0127}}%

\def\nabla{{\Greekmath 0272}}
\def\FindBoldGroup{%
   {\setbox0=\hbox{$\mathbf{x\global\edef\theboldgroup{\the\mathgroup}}$}}%
}

\def\Greekmath#1#2#3#4{%
    \if@compatibility
        \ifnum\mathgroup=\symbold
           \mathchoice{\mbox{\boldmath$\displaystyle\mathchar"#1#2#3#4$}}%
                      {\mbox{\boldmath$\textstyle\mathchar"#1#2#3#4$}}%
                      {\mbox{\boldmath$\scriptstyle\mathchar"#1#2#3#4$}}%
                      {\mbox{\boldmath$\scriptscriptstyle\mathchar"#1#2#3#4$}}%
        \else
           \mathchar"#1#2#3#4% 
        \fi 
    \else 
        \FindBoldGroup
        \ifnum\mathgroup=\theboldgroup % For 2e
           \mathchoice{\mbox{\boldmath$\displaystyle\mathchar"#1#2#3#4$}}%
                      {\mbox{\boldmath$\textstyle\mathchar"#1#2#3#4$}}%
                      {\mbox{\boldmath$\scriptstyle\mathchar"#1#2#3#4$}}%
                      {\mbox{\boldmath$\scriptscriptstyle\mathchar"#1#2#3#4$}}%
        \else
           \mathchar"#1#2#3#4% 
        \fi     	    
	  \fi}

\newif\ifGreekBold  \GreekBoldfalse
\let\SAVEPBF=\pbf
\def\pbf{\GreekBoldtrue\SAVEPBF}%

\@ifundefined{theorem}{\newtheorem{theorem}{Theorem}}{}
\@ifundefined{lemma}{\newtheorem{lemma}[theorem]{Lemma}}{}
\@ifundefined{corollary}{\newtheorem{corollary}[theorem]{Corollary}}{}
\@ifundefined{conjecture}{}{}
\@ifundefined{proposition}{\newtheorem{proposition}[theorem]{Proposition}}{}
\@ifundefined{axiom}{}{}
\@ifundefined{remark}{\newtheorem{remark}{Remark}}{}
\@ifundefined{example}{}{}
\@ifundefined{exercise}{}{}
\@ifundefined{definition}{}{}

\@ifundefined{mathletters}{%
  \newcounter{equationnumber}  
  \def\mathletters{%
     \addtocounter{equation}{1}
     \edef\@currentlabel{\theequation}%
     \setcounter{equationnumber}{\c@equation}
     \setcounter{equation}{0}%
     \edef\theequation{\@currentlabel\noexpand\alph{equation}}%
  }
  
}{}

%Logos
\@ifundefined{BibTeX}{%
    \def\BibTeX{{\rm B\kern-.05em{\sc i\kern-.025em b}\kern-.08em
                 T\kern-.1667em\lower.7ex\hbox{E}\kern-.125emX}}}{}%
\@ifundefined{AmS}%
    {\def\AmS{{\protect\usefont{OMS}{cmsy}{m}{n}%
                A\kern-.1667em\lower.5ex\hbox{M}\kern-.125emS}}}{}%
\@ifundefined{AmSTeX}{}{}%
%

% This macro is a fix to eqnarray
\def\@@eqncr{\let\@tempa\relax
    \ifcase\@eqcnt \def\@tempa{& & &}\or \def\@tempa{& &}%
      \else \def\@tempa{&}\fi
     \@tempa
     \if@eqnsw
        \iftag@
           \@taggnum
        \else
           \@eqnnum\stepcounter{equation}%
        \fi
     \fi
     \global\tag@false
     \global\@eqnswtrue
     \global\@eqcnt\z@\cr}

\def\TCItag{\@ifnextchar*{\@TCItagstar}{\@TCItag}}
\def\@TCItag#1{%
    \global\tag@true
    \global\def\@taggnum{(#1)}%
    \global\def\@currentlabel{#1}}
\def\@TCItagstar*#1{%
    \global\tag@true
    \global\def\@taggnum{#1}%
    \global\def\@currentlabel{#1}}
%
%%%%%%%%%%%%%%%%%%%%%%%%%%%%%%%%%%%%%%%%%%%%%%%%%%%%%%%%%%%%%%%%%%%%%
%
%
%
%
%
%
%
%
%
%
%
%
%
%
%
%
%
% Macros for text size operators:
%

\def\tint{\msi@int\textstyle\int}%
\def\tiint{\msi@int\textstyle\iint}%
\def\tiiint{\msi@int\textstyle\iiint}%
\def\tiiiint{\msi@int\textstyle\iiiint}%
\def\tidotsint{\msi@int\textstyle\idotsint}%
\def\toint{\msi@int\textstyle\oint}%

%
%
%
%
%
%
%
%
%
%
%
%
%
%
%Macros for display size operators:
%

\newtoks\temptoksa
\newtoks\temptoksb
\newtoks\temptoksc

\def\msi@int#1#2{%
 \def\@temp{{#1#2\the\temptoksc_{\the\temptoksa}^{\the\temptoksb}}}%   
 \futurelet\@nextcs
 \@int
}

\def\@int{%
   \ifx\@nextcs\limits
      \typeout{Found limits}%
      \temptoksc={\limits}%
	  \let\@next\@intgobble%
   \else\ifx\@nextcs\nolimits
      \typeout{Found nolimits}%
      \temptoksc={\nolimits}%
	  \let\@next\@intgobble%
   \else
      \typeout{Did not find limits or no limits}%
      \temptoksc={}%
      \let\@next\msi@limits%
   \fi\fi
   \@next   
}%

\def\@intgobble#1{%
   \typeout{arg is #1}%
   \msi@limits
}

\def\msi@limits{%
   \temptoksa={}%
   \temptoksb={}%
   \@ifnextchar_{\@limitsa}{\@limitsb}%
}

\def\@limitsa_#1{%
   \temptoksa={#1}%
   \@ifnextchar^{\@limitsc}{\@temp}%
}

\def\@limitsb{%
   \@ifnextchar^{\@limitsc}{\@temp}%
}

\def\@limitsc^#1{%
   \temptoksb={#1}%
   \@ifnextchar_{\@limitsd}{\@temp}%   
}

\def\@limitsd_#1{%
   \temptoksa={#1}%
   \@temp
}

\def\dint{\msi@int\displaystyle\int}%
\def\diint{\msi@int\displaystyle\iint}%
\def\diiint{\msi@int\displaystyle\iiint}%
\def\diiiint{\msi@int\displaystyle\iiiint}%
\def\didotsint{\msi@int\displaystyle\idotsint}%
\def\doint{\msi@int\displaystyle\oint}%

\def\dsum{\mathop{\displaystyle \sum }}%

\if@compatibility\else
  % Always load amsmath in LaTeX2e mode
  \RequirePackage{amsmath}
\fi

\def\ExitTCILatex{\makeatother }

\bgroup
\ifx\ds@amstex\relax
   \message{amstex already loaded}\aftergroup\ExitTCILatex
\else
   \@ifpackageloaded{amsmath}%
      {\if@compatibility\message{amsmath already loaded}\fi\aftergroup\ExitTCILatex}
      {}
   \@ifpackageloaded{amstex}%
      {\if@compatibility\message{amstex already loaded}\fi\aftergroup\ExitTCILatex}
      {}
   \@ifpackageloaded{amsgen}%
      {\if@compatibility\message{amsgen already loaded}\fi\aftergroup\ExitTCILatex}
      {}
\fi
\egroup

%Exit if any of the AMS macros are already loaded.
%This is always the case for LaTeX2e mode.

%%%%%%%%%%%%%%%%%%%%%%%%%%%%%%%%%%%%%%%%%%%%%%%%%%%%%%%%%%%%%%%%%%%%%%%%%%
% NOTE: The rest of this file is read only if in LaTeX 2.09 compatibility
% mode. This section is used to define AMS-like constructs in the
% event they have not been defined.
%%%%%%%%%%%%%%%%%%%%%%%%%%%%%%%%%%%%%%%%%%%%%%%%%%%%%%%%%%%%%%%%%%%%%%%%%%
\typeout{TCILATEX defining AMS-like constructs in LaTeX 2.09 COMPATIBILITY MODE}
%%%%%%%%%%%%%%%%%%%%%%%%%%%%%%%%%%%%%%%%%%%%%%%%%%%%%%%%%%%%%%%%%%%%%%%%
%  Macros to define some AMS LaTeX constructs when 
%  AMS LaTeX has not been loaded
% 
% These macros are copied from the AMS-TeX package for doing
% multiple integrals.
%
\let\DOTSI\relax
\def\RIfM@{\relax\ifmmode}%
\def\FN@{\futurelet\next}%
\newcount\intno@
\def\iint{\DOTSI\intno@\tw@\FN@\ints@}%
\def\iiint{\DOTSI\intno@\thr@@\FN@\ints@}%
\def\iiiint{\DOTSI\intno@4 \FN@\ints@}%
\def\idotsint{\DOTSI\intno@\z@\FN@\ints@}%
\def\ints@{\findlimits@\ints@@}%
\newif\iflimtoken@
\newif\iflimits@
\def\findlimits@{\limtoken@true\ifx\next\limits\limits@true
 \else\ifx\next\nolimits\limits@false\else
 \limtoken@false\ifx\ilimits@\nolimits\limits@false\else
 \ifinner\limits@false\else\limits@true\fi\fi\fi\fi}%
\def\multint@{\int\ifnum\intno@=\z@\intdots@                          %1
 \else\intkern@\fi                                                    %2
 \ifnum\intno@>\tw@\int\intkern@\fi                                   %3
 \ifnum\intno@>\thr@@\int\intkern@\fi                                 %4
 \int}%                                                               %5
\def\multintlimits@{\intop\ifnum\intno@=\z@\intdots@\else\intkern@\fi
 \ifnum\intno@>\tw@\intop\intkern@\fi
 \ifnum\intno@>\thr@@\intop\intkern@\fi\intop}%
\def\intic@{%
    \mathchoice{\hskip.5em}{\hskip.4em}{\hskip.4em}{\hskip.4em}}%
\def\negintic@{\mathchoice
 {\hskip-.5em}{\hskip-.4em}{\hskip-.4em}{\hskip-.4em}}%
\def\ints@@{\iflimtoken@                                              %1
 \def\ints@@@{\iflimits@\negintic@
   \mathop{\intic@\multintlimits@}\limits                             %2
  \else\multint@\nolimits\fi                                          %3
  \eat@}%                                                             %4
 \else                                                                %5
 \def\ints@@@{\iflimits@\negintic@
  \mathop{\intic@\multintlimits@}\limits\else
  \multint@\nolimits\fi}\fi\ints@@@}%
\def\intkern@{\mathchoice{\!\!\!}{\!\!}{\!\!}{\!\!}}%
\def\plaincdots@{\mathinner{\cdotp\cdotp\cdotp}}%
\def\intdots@{\mathchoice{\plaincdots@}%
 {{\cdotp}\mkern1.5mu{\cdotp}\mkern1.5mu{\cdotp}}%
 {{\cdotp}\mkern1mu{\cdotp}\mkern1mu{\cdotp}}%
 {{\cdotp}\mkern1mu{\cdotp}\mkern1mu{\cdotp}}}%
%
%
%  These macros are for doing the AMS \text{} construct
%
\def\RIfM@{\relax\protect\ifmmode}
\def\text{\RIfM@\expandafter\text@\else\expandafter\mbox\fi}
\let\nfss@text\text
\def\text@#1{\mathchoice
   {\textdef@\displaystyle\f@size{#1}}%
   {\textdef@\textstyle\tf@size{\firstchoice@false #1}}%
   {\textdef@\textstyle\sf@size{\firstchoice@false #1}}%
   {\textdef@\textstyle \ssf@size{\firstchoice@false #1}}%
   \glb@settings}

\def\textdef@#1#2#3{\hbox{{%
                    \everymath{#1}%
                    \let\f@size#2\selectfont
                    #3}}}
\newif\iffirstchoice@
\firstchoice@true
%
%These are the AMS constructs for multiline limits.
%
\def\Let@{\relax\iffalse{\fi\let\\=\cr\iffalse}\fi}%
\def\vspace@{\def\vspace##1{\crcr\noalign{\vskip##1\relax}}}%
\def\multilimits@{\bgroup\vspace@\Let@
 \baselineskip\fontdimen10 \scriptfont\tw@
 \advance\baselineskip\fontdimen12 \scriptfont\tw@
 \lineskip\thr@@\fontdimen8 \scriptfont\thr@@
 \lineskiplimit\lineskip
 \vbox\bgroup\ialign\bgroup\hfil$\m@th\scriptstyle{##}$\hfil\crcr}%
\def\Sb{_\multilimits@}%
\def\endSb{\crcr\egroup\egroup\egroup}%
\def\Sp{^\multilimits@}%

%
%
%These are AMS constructs for horizontal arrows
%
\newdimen\ex@
\ex@.2326ex
\def\rightarrowfill@#1{$#1\m@th\mathord-\mkern-6mu\cleaders
 \hbox{$#1\mkern-2mu\mathord-\mkern-2mu$}\hfill
 \mkern-6mu\mathord\rightarrow$}%
\def\leftarrowfill@#1{$#1\m@th\mathord\leftarrow\mkern-6mu\cleaders
 \hbox{$#1\mkern-2mu\mathord-\mkern-2mu$}\hfill\mkern-6mu\mathord-$}%
\def\leftrightarrowfill@#1{$#1\m@th\mathord\leftarrow
\mkern-6mu\cleaders
 \hbox{$#1\mkern-2mu\mathord-\mkern-2mu$}\hfill
 \mkern-6mu\mathord\rightarrow$}%
\def\overrightarrow{\mathpalette\overrightarrow@}%
\def\overrightarrow@#1#2{\vbox{\ialign{##\crcr\rightarrowfill@#1\crcr
 \noalign{\kern-\ex@\nointerlineskip}$\m@th\hfil#1#2\hfil$\crcr}}}%

\def\overleftarrow{\mathpalette\overleftarrow@}%
\def\overleftarrow@#1#2{\vbox{\ialign{##\crcr\leftarrowfill@#1\crcr
 \noalign{\kern-\ex@\nointerlineskip}$\m@th\hfil#1#2\hfil$\crcr}}}%
\def\overleftrightarrow{\mathpalette\overleftrightarrow@}%
\def\overleftrightarrow@#1#2{\vbox{\ialign{##\crcr
   \leftrightarrowfill@#1\crcr
 \noalign{\kern-\ex@\nointerlineskip}$\m@th\hfil#1#2\hfil$\crcr}}}%
\def\underrightarrow{\mathpalette\underrightarrow@}%
\def\underrightarrow@#1#2{\vtop{\ialign{##\crcr$\m@th\hfil#1#2\hfil
  $\crcr\noalign{\nointerlineskip}\rightarrowfill@#1\crcr}}}%

\def\underleftarrow{\mathpalette\underleftarrow@}%
\def\underleftarrow@#1#2{\vtop{\ialign{##\crcr$\m@th\hfil#1#2\hfil
  $\crcr\noalign{\nointerlineskip}\leftarrowfill@#1\crcr}}}%
\def\underleftrightarrow{\mathpalette\underleftrightarrow@}%
\def\underleftrightarrow@#1#2{\vtop{\ialign{##\crcr$\m@th
  \hfil#1#2\hfil$\crcr
 \noalign{\nointerlineskip}\leftrightarrowfill@#1\crcr}}}%
%%%%%%%%%%%%%%%%%%%%%

\def\qopnamewl@#1{\mathop{\operator@font#1}\nlimits@}
\let\nlimits@\displaylimits
\def\setboxz@h{\setbox\z@\hbox}

\def\varlim@#1#2{\mathop{\vtop{\ialign{##\crcr
 \hfil$#1\m@th\operator@font lim$\hfil\crcr
 \noalign{\nointerlineskip}#2#1\crcr
 \noalign{\nointerlineskip\kern-\ex@}\crcr}}}}

 \def\rightarrowfill@#1{\m@th\setboxz@h{$#1-$}\ht\z@\z@
  $#1\copy\z@\mkern-6mu\cleaders
  \hbox{$#1\mkern-2mu\box\z@\mkern-2mu$}\hfill
  \mkern-6mu\mathord\rightarrow$}
\def\leftarrowfill@#1{\m@th\setboxz@h{$#1-$}\ht\z@\z@
  $#1\mathord\leftarrow\mkern-6mu\cleaders
  \hbox{$#1\mkern-2mu\copy\z@\mkern-2mu$}\hfill
  \mkern-6mu\box\z@$}

\def\projlim{\qopnamewl@{proj\,lim}}
\def\injlim{\qopnamewl@{inj\,lim}}
\def\varinjlim{\mathpalette\varlim@\rightarrowfill@}
\def\varprojlim{\mathpalette\varlim@\leftarrowfill@}
\def\varliminf{\mathpalette\varliminf@{}}
\def\varliminf@#1{\mathop{\underline{\vrule\@depth.2\ex@\@width\z@
   \hbox{$#1\m@th\operator@font lim$}}}}
\def\varlimsup{\mathpalette\varlimsup@{}}
\def\varlimsup@#1{\mathop{\overline
  {\hbox{$#1\m@th\operator@font lim$}}}}

%
%Companion to stackrel
%
%
%
% These are AMS environments that will be defined to
% be verbatims if amstex has not actually been 
% loaded
%
%
\begingroup \catcode `|=0 \catcode `[= 1
\catcode`]=2 \catcode `\{=12 \catcode `\}=12
\catcode`\\=12 
|gdef|@alignverbatim#1\end{align}[#1|end[align]]
|gdef|@salignverbatim#1\end{align*}[#1|end[align*]]

|gdef|@alignatverbatim#1\end{alignat}[#1|end[alignat]]
|gdef|@salignatverbatim#1\end{alignat*}[#1|end[alignat*]]

|gdef|@xalignatverbatim#1\end{xalignat}[#1|end[xalignat]]
|gdef|@sxalignatverbatim#1\end{xalignat*}[#1|end[xalignat*]]

|gdef|@gatherverbatim#1\end{gather}[#1|end[gather]]
|gdef|@sgatherverbatim#1\end{gather*}[#1|end[gather*]]

|gdef|@gatherverbatim#1\end{gather}[#1|end[gather]]
|gdef|@sgatherverbatim#1\end{gather*}[#1|end[gather*]]

|gdef|@multilineverbatim#1\end{multiline}[#1|end[multiline]]
|gdef|@smultilineverbatim#1\end{multiline*}[#1|end[multiline*]]

|gdef|@arraxverbatim#1\end{arrax}[#1|end[arrax]]
|gdef|@sarraxverbatim#1\end{arrax*}[#1|end[arrax*]]

|gdef|@tabulaxverbatim#1\end{tabulax}[#1|end[tabulax]]
|gdef|@stabulaxverbatim#1\end{tabulax*}[#1|end[tabulax*]]

|endgroup

\def\align{\@verbatim \frenchspacing\@vobeyspaces \@alignverbatim
You are using the "align" environment in a style in which it is not defined.}

\@namedef{align*}{\@verbatim\@salignverbatim
You are using the "align*" environment in a style in which it is not defined.}
\expandafter\let\csname endalign*\endcsname =\endtrivlist

\def\alignat{\@verbatim \frenchspacing\@vobeyspaces \@alignatverbatim
You are using the "alignat" environment in a style in which it is not defined.}

\@namedef{alignat*}{\@verbatim\@salignatverbatim
You are using the "alignat*" environment in a style in which it is not defined.}
\expandafter\let\csname endalignat*\endcsname =\endtrivlist

\def\xalignat{\@verbatim \frenchspacing\@vobeyspaces \@xalignatverbatim
You are using the "xalignat" environment in a style in which it is not defined.}

\@namedef{xalignat*}{\@verbatim\@sxalignatverbatim
You are using the "xalignat*" environment in a style in which it is not defined.}
\expandafter\let\csname endxalignat*\endcsname =\endtrivlist

\def\gather{\@verbatim \frenchspacing\@vobeyspaces \@gatherverbatim
You are using the "gather" environment in a style in which it is not defined.}

\@namedef{gather*}{\@verbatim\@sgatherverbatim
You are using the "gather*" environment in a style in which it is not defined.}
\expandafter\let\csname endgather*\endcsname =\endtrivlist

\def\multiline{\@verbatim \frenchspacing\@vobeyspaces \@multilineverbatim
You are using the "multiline" environment in a style in which it is not defined.}

\@namedef{multiline*}{\@verbatim\@smultilineverbatim
You are using the "multiline*" environment in a style in which it is not defined.}
\expandafter\let\csname endmultiline*\endcsname =\endtrivlist

\def\arrax{\@verbatim \frenchspacing\@vobeyspaces \@arraxverbatim
You are using a type of "array" construct that is only allowed in AmS-LaTeX.}

\def\tabulax{\@verbatim \frenchspacing\@vobeyspaces \@tabulaxverbatim
You are using a type of "tabular" construct that is only allowed in AmS-LaTeX.}

\@namedef{arrax*}{\@verbatim\@sarraxverbatim
You are using a type of "array*" construct that is only allowed in AmS-LaTeX.}
\expandafter\let\csname endarrax*\endcsname =\endtrivlist

\@namedef{tabulax*}{\@verbatim\@stabulaxverbatim
You are using a type of "tabular*" construct that is only allowed in AmS-LaTeX.}
\expandafter\let\csname endtabulax*\endcsname =\endtrivlist

% macro to simulate ams tag construct

% This macro is a fix to the equation environment
 \def\endequation{%
     \ifmmode\ifinner % FLEQN hack
      \iftag@
        \addtocounter{equation}{-1} % undo the increment made in the begin part
        $\hfil
           \displaywidth\linewidth\@taggnum\egroup \endtrivlist
        \global\tag@false
        \global\@ignoretrue   
      \else
        $\hfil
           \displaywidth\linewidth\@eqnnum\egroup \endtrivlist
        \global\tag@false
        \global\@ignoretrue 
      \fi
     \else   
      \iftag@
        \addtocounter{equation}{-1} % undo the increment made in the begin part
        \eqno \hbox{\@taggnum}
        \global\tag@false%
        $$\global\@ignoretrue
      \else
        \eqno \hbox{\@eqnnum}% $$ BRACE MATCHING HACK
        $$\global\@ignoretrue
      \fi
     \fi\fi
 } 

 \newif\iftag@ \tag@false
 
 \def\TCItag{\@ifnextchar*{\@TCItagstar}{\@TCItag}}
 \def\@TCItag#1{%
     \global\tag@true
     \global\def\@taggnum{(#1)}%
     \global\def\@currentlabel{#1}}
 \def\@TCItagstar*#1{%
     \global\tag@true
     \global\def\@taggnum{#1}%
     \global\def\@currentlabel{#1}}

  \@ifundefined{tag}{
     \def\tag{\@ifnextchar*{\@tagstar}{\@tag}}
     \def\@tag#1{%
         \global\tag@true
         \global\def\@taggnum{(#1)}}
     \def\@tagstar*#1{%
         \global\tag@true
         \global\def\@taggnum{#1}}
  }{}

\def\dfrac#1#2{{\displaystyle {#1 \over #2}}}%
\def\binom#1#2{{#1 \choose #2}}%
\def\dbinom#1#2{{\displaystyle {#1 \choose #2}}}%

% Do not add anything to the end of this file.  
% The last section of the file is loaded only if 
% amstex has not been.
\makeatother

\begin{document}

\title{Harmonic functions which vanish on coaxial cylinders}
\date{}
\author{Stephen J. Gardiner and Hermann Render}
\maketitle

\begin{abstract}
It was recently established that a function which is harmonic on an infinite
cylinder and vanishes on the boundary necessarily extends to an entire
harmonic function. This paper considers harmonic functions on an annular
cylinder which vanish on both the inner and outer cylindrical boundary
components. Such functions are shown to extend harmonically to the whole of
space apart from the common axis of symmetry. One of the ingredients in the
proof is a new estimate for the zeros of cross product Bessel functions.
\end{abstract}

\section{Introduction}

\footnotetext{%
\noindent 2010 \textit{Mathematics Subject Classification } 31B05, 33C10.
\par
\noindent \textit{Keywords: }harmonic continuation, Green function,
cylindrical harmonics, cross product Bessel functions{}}The Schwarz
reflection principle is a beautiful and important result concerning the
extension of a harmonic function $h$ on a domain $\Omega \subset \mathbb{R}%
^{N}$ through a relatively open subset $E$ of $\partial \Omega $ on which $h$
vanishes. The extension is defined by a simple formula, and the domain of
extension is independent of the choice of $h$. When $N=2$ such a reflection
principle holds whenever $E$ is contained in an analytic arc (see Chapter 9
of \cite{Kh96}). When $N\geq 3$ and $N$ is odd, Ebenfelt and Khavinson \cite%
{EK} (see also Chapter 10 of \cite{Kh96}) have shown that a point-to-point
reflection law can only hold when the containing real analytic surface is
either a hyperplane or a sphere. Thus, for other surfaces in higher
dimensions, more elaborate arguments are required to investigate whether
such harmonic extension is still possible.

An important particular case concerns cylindrical surfaces, since a cylinder
is the Cartesian product of a line and a sphere, each of which separately
admits Schwarz reflection. Indeed, prior to the results of \cite{EK}, the
existence of a point-to-point reflection law for cylinders in $\mathbb{R}%
^{3} $ had already been investigated and disproved by Khavinson and Shapiro 
\cite{KS}. Nevertheless, Khavinson asked whether, using $B^{\prime }$ to
denote the open unit ball in $\mathbb{R}^{N-1}$, a harmonic function on the
cylinder $B^{\prime }\times \mathbb{R}$ which vanishes on $\partial
B^{\prime }\times \mathbb{R}$ must automatically have a harmonic extension
to the whole of $\mathbb{R}^{N}$.

This was verified in a recent paper of the authors \cite{GR1}. More
generally, for any $a>0$, it was shown there that a harmonic function on a
finite cylinder $B^{\prime }\times (-a,a)$ which vanishes on $\partial
B^{\prime }\times (-a,a)$ has a harmonic extension to the strip $\mathbb{R}%
^{N-1}\times (-a,a)$. The proof relied on a study of the Green function $%
G_{\Omega }(\cdot ,y)$ for the infinite cylinder $\Omega =B^{\prime }\times 
\mathbb{R}$ with pole at $y\in \Omega $. It is a classical fact that, in
three dimensions, $G_{\Omega }(\cdot ,y)$ can be represented as a double
series involving Bessel functions $J_{n}$ of the first kind of order $n$ and
their zeros, and Chebychev polynomials. In \cite{GR1} such a representation
was established for all dimensions (ultraspherical polynomials take the
place of Chebychev polynomials when $N\geq 4$), and a rigorous analysis of
its convergence properties outside $\Omega $ revealed that $G_{\Omega
}(\cdot ,y)$ possesses a harmonic extension to $\mathbb{R}^{N-1}\times
\left( \mathbb{R}\setminus \left\{ y_{N}\right\} \right) $.

In this paper we turn our attention to the corresponding problem for annular
cylinders. Let $(x^{\prime },x_{N})$ denote a typical point of $\mathbb{R}%
^{N-1}\times \mathbb{R}$ and $\left\Vert x^{\prime }\right\Vert $ denote the
Euclidean norm of $x^{\prime }$. We define 
\begin{equation*}
\Omega _{b}=A_{b}^{\prime }\times \mathbb{R}\text{, \ \ where \ \ }%
A_{b}^{\prime }=\{x^{\prime }:1<\left\Vert x^{\prime }\right\Vert <b\}\text{
\ \ \ }(b>1).
\end{equation*}%
Any harmonic function $h$ on $\Omega _{b}$ that vanishes on the outer
cylindrical boundary component was shown in \cite{GR2} to have a harmonic
extension to the set $\{x^{\prime }:1<\left\Vert x^{\prime }\right\Vert
<2b-1\}\times \mathbb{R}$. We will now establish that considerably more can
be said when $h$ also vanishes on the inner cylindrical boundary component.

\begin{theorem}
\label{main}If $h$ is a harmonic function on $\Omega _{b}$ that vanishes on $%
\partial \Omega _{b}$, then $h$ has a harmonic extension to $\left( \mathbb{R%
}^{N-1}\backslash \{0^{\prime }\}\right) \times \mathbb{R}$.
\end{theorem}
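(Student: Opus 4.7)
The plan is to mirror the Green-function strategy of \cite{GR1}, working first on the finite annular cylinder $\Omega_b^{a} = A_b^{\prime} \times (-a, a)$ for arbitrary $a > 0$. Since $h$ vanishes on the lateral part of $\partial \Omega_b^{a}$, the Poisson representation for this cylinder reduces to integrals over the two caps:
\begin{equation*}
h(x) = \int_{A_b^{\prime}} h(z^{\prime}, -a)\, P_{a}^{-}(x, z^{\prime})\, dz^{\prime} + \int_{A_b^{\prime}} h(z^{\prime}, a)\, P_{a}^{+}(x, z^{\prime})\, dz^{\prime} \qquad (x \in \Omega_b^{a}),
\end{equation*}
where $P_{a}^{\pm}(x, \cdot)$ denotes the Poisson kernel of $\Omega_b^{a}$ at the cap $A_b^{\prime} \times \{\pm a\}$. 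It will suffice to show that, for each fixed $z^{\prime} \in A_b^{\prime}$, the map $x \mapsto P_{a}^{\pm}(x, z^{\prime})$ extends harmonically to $(\mathbb{R}^{N-1} \setminus \{0^{\prime}\}) \times (-a, a)$, continuously in $z^{\prime}$. Since $a > 0$ is arbitrary, the extensions obtained in this way patch together to give the claimed continuation of $h$ on $(\mathbb{R}^{N-1} \setminus \{0^{\prime}\}) \times \mathbb{R}$.

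Separation of variables supplies the Dirichlet eigenfunctions of the Laplacian on $A_b^{\prime}$ in the form
\begin{equation*}
\phi_{n,k}(x^{\prime}) = R_{n,k}(\|x^{\prime}\|)\, S_n(x^{\prime}/\|x^{\prime}\|),
\end{equation*}
where $S_n$ ranges over spherical harmonics of degree $n$ on the unit sphere in $\mathbb{R}^{N-1}$ and, with $\nu = n + (N-3)/2$,
\begin{equation*}
R_{n,k}(r) = J_{\nu}(\lambda_{n,k} r)\, Y_{\nu}(\lambda_{n,k}) - Y_{\nu}(\lambda_{n,k} r)\, J_{\nu}(\lambda_{n,k}).
\end{equation*}
This combination vanishes at $r = 1$ by construction, and it vanishes at $r = b$ precisely when $\lambda_{n,k}$ is a positive zero of the cross product Bessel function $\lambda \mapsto J_{\nu}(\lambda) Y_{\nu}(b\lambda) - Y_{\nu}(\lambda) J_{\nu}(b\lambda)$. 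Solving the axial Green function problem with eigenvalue $\lambda_{n,k}^{2}$ then yields
\begin{equation*}
P_{a}^{\pm}(x, z^{\prime}) = \sum_{n, k} \phi_{n,k}(x^{\prime})\, \phi_{n,k}(z^{\prime})\, \frac{\sinh(\lambda_{n,k}(a \pm x_N))}{\sinh(2 a \lambda_{n,k})},
\end{equation*}
whose axial ratio decays exponentially in $\lambda_{n,k}$ at rate $a \mp x_N$, bounded below by $a - |x_N| > 0$ on compact subsets of $(-a, a)$.

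Because each $R_{n,k}$ is real analytic on $(0, \infty)$, the same series defines a candidate harmonic continuation of $P_{a}^{\pm}(\cdot, z^{\prime})$ to $(\mathbb{R}^{N-1} \setminus \{0^{\prime}\}) \times (-a, a)$. The task is to prove absolute and $C^{2}$-uniform convergence of this series on compact subsets. When $\|x^{\prime}\|$ drops below $1$, the $Y_{\nu}(\lambda_{n,k} \|x^{\prime}\|)$ term in $R_{n,k}$ becomes large; for fixed small $r$ its leading small-argument behaviour is of order $\Gamma(\nu)\, (\lambda_{n,k} r / 2)^{-\nu}$, which grows super-exponentially in $n$. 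This growth must be dominated by the exponential axial decay, whose rate is proportional to $\lambda_{n,k}$. The main obstacle, and the principal technical contribution of the paper, is therefore the promised new lower bound on the zeros $\lambda_{n,k}$ of the cross product Bessel function, asserting roughly that they grow at least linearly in both $n$ and $k$ with an explicit constant. Combining this bound with uniform Debye-type asymptotics for $J_{\nu}$ and $Y_{\nu}$ of large order ensures that the axial decay beats the radial growth, the series and its derivatives converge uniformly on compact subsets, the limit is harmonic in $x$ and continuous in $z^{\prime}$, and substitution into the Poisson representation delivers the harmonic extension of $h$.
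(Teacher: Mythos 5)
Your overall architecture is the same as the paper's: pass to a finite annular cylinder, expand the relevant kernel in cross--product Bessel eigenfunctions, control the series outside the annulus using a new lower bound on the zeros $\rho_{\nu,m}$, and let the cylinder length tend to infinity. (The paper works with the Green function of the \emph{infinite} cylinder and represents $h$ as a potential of measures on four horizontal slices, rather than with the cap Poisson kernels of the finite cylinder, but this is a cosmetic difference; for Theorem \ref{main} your Poisson representation over the two caps is legitimate since $h$ is continuous on the closed finite cylinder.) You have also correctly identified the decisive technical input, namely the universal lower bound $\rho_{\nu,m}>(\nu+m/4)/b$ of Proposition \ref{Unu}(i), which rests on the new spacing estimate of Theorem \ref{app}.

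There is, however, a genuine gap at the step ``it will suffice to show that $P_{a}^{\pm}(\cdot,z')$ extends harmonically to $(\mathbb{R}^{N-1}\setminus\{0'\})\times(-a,a)$,'' with the series ``converging uniformly on compact subsets'' of that set. This intermediate claim is not what your own comparison of rates delivers, and it is not obtainable by this method. For $\|x'\|=r<1$ the $n$-th angular block of the extended series grows like $r^{-\nu_n}=e^{\nu_n\log(1/r)}$ (this is Proposition \ref{Unu}(v)), while the axial factor contributes at best $e^{-\rho_{\nu_n,k}(a-|x_N|)}\le e^{-\nu_n(a-|x_N|)/b}$. The product over $n$ is summable precisely when $\log(1/r)<(a-|x_N|)/b$, i.e.\ on the region $\|x'\|>e^{-(a-|x_N|)/b}$, which pinches to $\|x'\|\ge 1$ as $|x_N|\to a$. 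So for each fixed finite $a$ the extension region necessarily excludes a neighbourhood of the axis near the caps, exactly as in the paper's penultimate theorem; a compact subset of $(\mathbb{R}^{N-1}\setminus\{0'\})\times(-a,a)$ that comes close to the caps with small $\|x'\|$ lies outside the domain of convergence. The conclusion of Theorem \ref{main} still follows, because for any target point $x$ with $x'\ne 0'$ one can choose $a$ large enough that $x$ lies in the (correctly restricted) extension region, and the extensions for different $a$ agree by uniqueness of harmonic continuation from the annulus; but you must state and prove the finite-$a$ extension on the smaller region and only then take the union over $a$. As written, the asserted uniform convergence is false and the quantitative comparison that determines the true region of convergence is missing. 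Two smaller omissions: you need a lower bound on the normalizing integrals $\rho_{\nu,m}^{2}\int_{1}^{b}U_{\nu}^{2}(\rho_{\nu,m},t)\,t\,dt$ (Proposition \ref{Unu}(iv)) so that the eigenfunction coefficients in your kernel do not degrade the estimates, and you should note that summability in $k$ for fixed $n$ also uses the linear-in-$m$ growth of $\rho_{\nu,m}$, which is where the spacing theorem enters.
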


The proof again depends on an analysis of the Green function, but this turns
out to be more challenging for the annular cylinder. Instead of $J_{\nu }$,
the double series expansions now involve factors of the form $J_{\nu }\left(
\rho t\right) Y_{\nu }\left( \rho b\right) -J_{\nu }\left( \rho b\right)
Y_{\nu }\left( \rho t\right) $, where $Y_{\nu }$ is the Bessel function of
the second kind, and the sequence $(\rho _{\nu ,m})_{m\geq 1}$ of positive $%
\rho $-zeros of this expression when $t=1$. Known asymptotic estimates for $%
\rho _{\nu ,m}$ for fixed $\nu $ are insufficient for our purposes, so we
are led to establish a universal lower bound. We use this to show that a
harmonic function on $A_{b}^{\prime }\times (-a,a)$ which vanishes on $%
\partial A_{b}^{\prime }\times (-a,a)$\ must extend harmonically to all of $%
\left( \mathbb{R}^{N-1}\backslash \overline{B^{^{\prime }}}\right) \times
(-a,a)$. It also extends to a specified part of $\overline{B^{\prime }}%
\times (-a,a)$, which increases with $a$. Theorem \ref{main} then follows on
letting $a\rightarrow \infty $.

The proof of Theorem \ref{main} will be developed in Sections \ref{Prep} - %
\ref{endmain}, subject to verification of the estimates for $\rho _{\nu ,m}$%
. These estimates are then established in the final two sections of the
paper.

From now on we will assume that $N\geq 3$.

\section{Zeros of cross product Bessel functions\label{Prep}}

We refer to Watson \cite{Wat} for the definition of $J_{\nu }$ and $Y_{\nu }$%
, the usual Bessel functions of order $\nu \geq 0$ of the first and second
kinds, respectively, and define $N_{\nu }=J_{\nu }^{2}+Y_{\nu }^{2}$.
Further, let $C_{\nu }$ denote any cylinder function of order $\nu $, that
is, $C_{\nu }=\alpha J_{\nu }+\beta Y_{\nu }$ for some $\alpha ,\beta \in 
\mathbb{R}$. We collect below some properties of these functions for later
use.

\begin{lemma}
\label{JV}(i) $\dfrac{d}{dz}z^{\nu }C_{\nu }(z)=z^{\nu }C_{\nu -1}(z)$\ \
and \ $\dfrac{d}{dz}\dfrac{C_{\nu }(z)}{z^{\nu }}=-\dfrac{C_{\nu +1}(z)}{%
z^{\nu }}$.\newline
(ii) $C_{\nu -1}(z)+C_{\nu +1}(z)=\dfrac{2\nu }{z}C_{\nu }(z)$ \ and $C_{\nu
-1}(z)-C_{\nu +1}(z)=2C_{\nu }^{\prime }(z)$.\newline
(iii) $J_{\nu }(t)Y_{\nu }^{\prime }(t)-Y_{\nu }(t)J_{\nu }^{\prime }(t)=%
\dfrac{2}{\pi t}$ \ $(t>0)$.\newline
(iv) If $\nu \geq \frac{1}{2}$, then the function $t\longmapsto tN_{\nu }(t)$
is decreasing on $(0,\infty )$ and 
\begin{equation*}
\dfrac{2}{\pi t}\leq N_{\nu }(t)<\dfrac{2}{\pi }\dfrac{1}{\sqrt{t^{2}-\nu
^{2}}}\ \ \ \ (t>\nu ).
\end{equation*}%
If $0\leq \nu <\frac{1}{2}$, then the function $t\longmapsto tN_{\nu }(t)$
is increasing on $(0,\infty )$ and tends to $2/\pi $ as\ $t\rightarrow
\infty $.\newline
(v) The function $N_{\nu }$ is strictly decreasing on $(0,\infty )$.\newline
(vi) If $y(t)$ denotes $\sqrt{t}C_{\nu }(\kappa t)$, where $\kappa $ is a
non-zero constant, then 
\begin{equation*}
\frac{d^{2}y}{dt^{2}}+\left( \kappa ^{2}+\frac{\frac{1}{4}-\nu ^{2}}{t^{2}}%
\right) y=0\text{ \ \ \ }(t>0).
\end{equation*}
\end{lemma}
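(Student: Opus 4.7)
I would treat the six parts in order, since each is a variant of a classical Bessel identity, but several require nontrivial integral representations that deserve separate handling.

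Parts (i) and (ii) I would obtain directly from the power-series definition of $J_{\nu}$ and the fact that $Y_{\nu}$ is a linear combination of $J_{\nu}$ and $J_{-\nu}$; linearity in $(\alpha,\beta)$ then extends both sets of identities to arbitrary cylinder functions $C_{\nu}$. Part (iii) I would prove by noting that, from Bessel's equation, the Wronskian $W(t) = J_{\nu}(t) Y_{\nu}'(t) - Y_{\nu}(t) J_{\nu}'(t)$ satisfies $(tW)' = 0$, hence $W = c/t$; the constant $c = 2/\pi$ is read off from the standard Hankel asymptotic expansions of $J_{\nu}$ and $Y_{\nu}$ at infinity.

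For part (vi) I would just substitute: if $w(t) = C_{\nu}(\kappa t)$, then Bessel's equation gives
\[
w''(t) + \frac{1}{t}w'(t) + \left(\kappa^{2} - \frac{\nu^{2}}{t^{2}}\right)w(t) = 0.
\]
Writing $y = t^{1/2} w$, I would compute $y' = t^{-1/2}w/2 + t^{1/2}w'$ and $y'' = -t^{-3/2}w/4 + t^{-1/2}w' + t^{1/2}w''$, and then verify by direct algebra, using the displayed ODE for $w$, that $y'' + (\kappa^{2} + (1/4 - \nu^{2})/t^{2})y = 0$.

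The substantive work lies in (iv) and (v). My plan would be to invoke Nicholson's integral representation
\[
N_{\nu}(t) = \frac{8}{\pi^{2}}\int_{0}^{\infty} K_{0}(2t\sinh u)\cosh(2\nu u)\,du
\]
(Watson, \S13.73). Since $K_{0}$ is strictly positive and strictly decreasing on $(0,\infty)$, differentiation under the integral sign with $K_{0}' = -K_{1} < 0$ immediately gives strict decrease of $N_{\nu}$, proving (v). For the monotonicity of $tN_{\nu}(t)$ in (iv), I would compute $(tN_{\nu})'$ by differentiating $tN_{\nu}$ under the integral and use the identity $\frac{d}{dz}[zK_{0}(z)] = K_{0}(z) - zK_{1}(z)$ together with an integration by parts in $u$, exploiting $\frac{d}{du}\cosh(2\nu u) = 2\nu\sinh(2\nu u)$, to rewrite the derivative in a form whose sign is governed by the factor $2\nu - 1$; this is the step where I expect the bookkeeping to be most delicate, because the correct integration by parts must produce integrands of one sign. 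The limit $tN_{\nu}(t) \to 2/\pi$ as $t\to\infty$ follows from the leading term in the Hankel asymptotics, and combining it with monotonicity gives the lower bound $N_{\nu}(t) \geq 2/(\pi t)$ when $\nu \geq 1/2$. The upper bound $N_{\nu}(t) < 2/(\pi\sqrt{t^{2}-\nu^{2}})$ for $t > \nu \geq 1/2$ I would derive from the Schafheitlin–Watson estimate: set $M(t) = t\sqrt{t^{2}-\nu^{2}}\,N_{\nu}(t)$ and show, again using Bessel's equation and (iii), that $M'(t)$ has constant sign on $(\nu,\infty)$, with limit $M(t) \to 2/\pi$ at infinity. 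The main obstacle throughout (iv) is arranging the integral and ODE manipulations so that the threshold $\nu = 1/2$ emerges transparently rather than being verified case by case.
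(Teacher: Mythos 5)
The paper gives no argument of its own here---it simply cites Watson (pp.\ 45, 66, 76 and \S 13.74) for (i)--(v) and Szeg\"{o} for (vi)---and most of your sketch is a faithful reconstruction of the proofs in those sources: the series recurrences for (i)--(ii), the Abel/Wronskian argument for (iii), the direct substitution for (vi), and Nicholson's integral for (v) and for the monotonicity of $tN_{\nu}(t)$ in (iv) (your planned integration by parts does reduce the sign of $(tN_{\nu})'$ to that of $\tanh u\cosh 2\nu u-2\nu \sinh 2\nu u$, i.e.\ to comparing $\tanh u$ with $2\nu \tanh (2\nu u)$, which cleanly splits at $\nu =\tfrac12$). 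One small caveat on (i)--(ii): expressing $Y_{\nu }$ as a combination of $J_{\nu }$ and $J_{-\nu }$ is only valid for non-integer $\nu $, so integer orders require a limiting argument; this is routine but should be said.

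The one genuine flaw is your route to the upper bound in (iv). The function $M(t)=t\sqrt{t^{2}-\nu ^{2}}\,N_{\nu }(t)$ grows like $2t/\pi $, so it cannot tend to $2/\pi $; you presumably intend $\sqrt{t^{2}-\nu ^{2}}\,N_{\nu }(t)$. Even for that quantity, the claimed monotonicity on $(\nu ,\infty )$ does not follow in any visible way from ``Bessel's equation and (iii)'', and you would be proving something strictly stronger than what is needed. The bound you actually want drops out in one line from the Nicholson formula you already invoked: since $\sinh u>u$ for $u>0$ and $K_{0}$ is positive and strictly decreasing,
\[
N_{\nu }(t)<\frac{8}{\pi ^{2}}\int_{0}^{\infty }K_{0}(2tu)\cosh (2\nu u)\,du=\frac{2}{\pi \sqrt{t^{2}-\nu ^{2}}}\qquad (t>\nu ),
\]
where the last equality is the classical transform $\int_{0}^{\infty }K_{0}(a u)\cosh (b u)\,du=\pi /(2\sqrt{a^{2}-b^{2}})$ with $a=2t>b=2\nu $. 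This is in fact Watson's own proof in \S 13.74; I would replace your ``Schafheitlin--Watson'' step by it. The upper bound is not decorative---the paper uses it (via Proposition 3(i) and Lemma 7(ii)) to get the $1/m$ decay of $N_{\nu _{n}}(\rho _{\nu _{n},m}b)$---so this step needs to be sound.
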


\begin{proof}
(i) and (ii) See pp.45, 66 of Watson \cite{Wat}.

(iii) See p.76, (1) of \cite{Wat}.

(iv) and (v) See Section 13.74 of \cite{Wat}.

(vi) See p.17, (1.8.9) of Szeg\"{o} \cite{Sze}.
\end{proof}

\bigskip

We now fix $b>1$ and define 
\begin{equation*}
U_{\nu }(\rho ,t)=J_{\nu }(\rho t)Y_{\nu }(\rho b)-J_{\nu }(\rho b)Y_{\nu
}(\rho t)\text{ \ \ \ }(\rho >0,t>0)\text{.}
\end{equation*}%
It is known \cite{Coc} (cf. Theorem X of Chapter VII in \cite{GMM}, and the
paragraph following the proof of Lemma \ref{uxy}\ below) that the zeros of
the function $\rho \mapsto U_{\nu }(\rho ,1)$ are all real and simple. We
denote by $(\rho _{\nu ,m})_{m\geq 1}$ the infinite sequence formed by the
positive zeros of this function arranged in increasing order. Clearly, 
\begin{equation}
\text{the function }x^{\prime }\mapsto U_{\nu }(\rho _{\nu ,m},\left\Vert
x^{\prime }\right\Vert )\text{ \ vanishes on }\partial A_{b}^{\prime }\text{
\ \ \ }(\nu \geq 0,m\geq 1).  \label{RE}
\end{equation}

Although the sequence $(\rho _{\nu ,m})_{m\geq 1}$ has been studied over
many years (as illustrated by \cite{McM}, \cite{SMM}), the following
important tool in the proof of Theorem \ref{main} appears to be new. We
defer its proof until Section \ref{appbegin}.

\begin{theorem}
\label{app}If $\nu \geq \frac{1}{2}$, then%
\begin{equation*}
\rho _{\nu ,m+1}-\rho _{\nu ,m}>\frac{\pi }{2b-1}\text{ \ \ \ }(m\geq 2).
\end{equation*}
\end{theorem}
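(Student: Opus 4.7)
The approach I would take is to convert the problem into one about the derivative of a single real-valued phase function. Since $N_\nu = J_\nu^2 + Y_\nu^2$ is positive on $(0,\infty)$, the planar curve $t \mapsto (J_\nu(t), Y_\nu(t))$ avoids the origin, so one can choose a continuous branch $\theta_\nu$ of its argument, giving $J_\nu(t) = N_\nu^{1/2}(t)\cos\theta_\nu(t)$ and $Y_\nu(t) = N_\nu^{1/2}(t)\sin\theta_\nu(t)$. The identity $\sin(\alpha-\beta) = \sin\alpha\cos\beta - \cos\alpha\sin\beta$ then factors $U_\nu$ as
\begin{equation*}
U_\nu(\rho, 1) = N_\nu^{1/2}(\rho)\, N_\nu^{1/2}(\rho b)\, \sin\!\big(\theta_\nu(\rho b) - \theta_\nu(\rho)\big).
\end{equation*}
Setting $\phi(\rho) = \theta_\nu(\rho b) - \theta_\nu(\rho)$, the positive zeros $\rho_{\nu,m}$ are exactly the positive values at which $\phi$ attains an integer multiple of $\pi$.

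The next step is to compute $\phi'$ and apply the mean value theorem. Writing $J_\nu = R\cos\theta_\nu$, $Y_\nu = R\sin\theta_\nu$ with $R = N_\nu^{1/2}$, a direct differentiation gives $J_\nu Y_\nu' - J_\nu' Y_\nu = N_\nu\, \theta_\nu'$, which combined with Lemma \ref{JV}(iii) yields $\theta_\nu'(t) = 2/(\pi t N_\nu(t))$. Hence
\begin{equation*}
\phi'(\rho) = \frac{2}{\pi\rho}\left(\frac{1}{N_\nu(\rho b)} - \frac{1}{N_\nu(\rho)}\right),
\end{equation*}
and by Lemma \ref{JV}(v), $N_\nu$ is strictly decreasing, so $\phi'(\rho) > 0$. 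Thus $\phi$ is strictly increasing, and between consecutive zeros $\phi$ jumps by exactly $\pi$; the mean value theorem then produces $\xi \in (\rho_{\nu,m}, \rho_{\nu,m+1})$ with $\rho_{\nu,m+1} - \rho_{\nu,m} = \pi/\phi'(\xi)$, reducing the theorem to a uniform upper bound for $\phi'$.

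To close the argument, I would use Lemma \ref{JV}(iv): for $\nu \geq 1/2$ the map $s \mapsto s N_\nu(s)$ is decreasing on $(0,\infty)$ with limit $2/\pi$ at infinity, so $N_\nu(s) \geq 2/(\pi s)$ for every $s > 0$. Discarding the nonpositive term $-1/N_\nu(\rho)$, this gives
\begin{equation*}
\phi'(\rho) \leq \frac{2}{\pi\rho}\cdot\frac{1}{N_\nu(\rho b)} \leq \frac{2}{\pi\rho}\cdot\frac{\pi\rho b}{2} = b < 2b-1,
\end{equation*}
where the final inequality uses $b > 1$. This yields $\rho_{\nu,m+1} - \rho_{\nu,m} > \pi/(2b-1)$ at once. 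I anticipate no real obstacle; the one delicate point is identifying the correct global lower estimate $N_\nu(s) \geq 2/(\pi s)$, which is precisely where the hypothesis $\nu \geq 1/2$ enters. In fact the argument gives the sharper bound $\pi/b$ and works for every $m \geq 1$, so the restriction $m \geq 2$ in the statement is not needed for this proof.
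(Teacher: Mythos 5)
Your proof is correct, and it takes a genuinely different route from the paper's. You introduce a Pr\"{u}fer-type phase $\theta _{\nu }$ for the curve $t\mapsto (J_{\nu }(t),Y_{\nu }(t))$ (which avoids the origin since $N_{\nu }>0$), factor $U_{\nu }(\rho ,1)=\sqrt{N_{\nu }(\rho )N_{\nu }(\rho b)}\,\sin \big(\theta _{\nu }(\rho b)-\theta _{\nu }(\rho )\big)$, and reduce the gap estimate to the pointwise bound $\phi ^{\prime }\leq b$ for $\phi (\rho )=\theta _{\nu }(\rho b)-\theta _{\nu }(\rho )$; this bound follows from the Wronskian identity of Lemma \ref{JV}(iii) together with Lemma \ref{JV}(iv),(v). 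The global estimate $N_{\nu }(s)\geq 2/(\pi s)$ for all $s>0$ that you need is indeed available: $sN_{\nu }(s)$ is decreasing for $\nu \geq \frac{1}{2}$ and the two-sided bound in Lemma \ref{JV}(iv) forces its limit at infinity to be $2/\pi $. The one step worth spelling out is that, because $\phi $ is strictly increasing (as $N_{\nu }$ is strictly decreasing), consecutive positive zeros of $U_{\nu }(\cdot ,1)$ correspond to \emph{consecutive} integer multiples of $\pi $ attained by $\phi $, so that $\phi (\rho _{\nu ,m+1})-\phi (\rho _{\nu ,m})=\pi $ exactly; you assert this and it is correct. The paper instead fixes the second variable and studies the zero curves $x\mapsto y_{\nu ,k}(x)$ of the two-variable function $u_{\nu }(x,y)$: it needs Sturm's comparison and convexity theorems in the $x$-variable (giving (\ref{eqZeroDiff}) and (\ref{SCT})), the Wronskian-type identities of Lemma \ref{uxy}, and the convexity of the curves $y_{\nu ,k}$, which rests on the second-order differential equation of Proposition \ref{ThmDE}; Sturm's convexity theorem is precisely where the factor $2b-1$ and the restriction $m\geq 2$ enter. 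Your phase-function argument is shorter, bypasses Sections \ref{appbegin} and \ref{append} entirely, and yields the stronger conclusion $\rho _{\nu ,m+1}-\rho _{\nu ,m}\geq \pi /b$ for all $m\geq 1$, which more than suffices for the application in Proposition \ref{Unu}(i).
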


Some further facts about $U_{\nu }$ and $(\rho _{\nu ,m})$ are assembled
below.

\begin{proposition}
\label{Unu}(i) If $\nu \geq \frac{1}{2}$, then 
\begin{equation}
\rho _{\nu ,m}>\dfrac{1}{b}\left( \nu +\dfrac{m}{4}\right) \text{ \ \ \ }%
(m\geq 1).  \label{lower}
\end{equation}%
Also, for any $\nu \geq 0$,%
\begin{equation}
\frac{\rho _{\nu ,m}}{m}\rightarrow \frac{\pi }{b-1}\text{ \ \ \ }%
(m\rightarrow \infty ).  \label{mcm}
\end{equation}%
\newline
(ii) If $U_{\nu }(\rho _{\nu ,m},a)=0$, where $0<a\leq b$, then%
\begin{equation*}
\left\{ \frac{\partial U_{\nu }}{\partial t}(\rho _{\nu ,m},a)\right\} ^{2}=%
\frac{4}{\pi ^{2}a^{2}}\frac{N_{\nu }(\rho _{\nu ,m}b)}{N_{\nu }(\rho _{\nu
,m}a)}\leq \frac{4}{\pi ^{2}a^{2}}.
\end{equation*}%
\newline
(iii) If $U_{\nu }(\rho _{\nu ,m},a)=0$, where $0<a<b$, then%
\begin{equation*}
\rho _{\nu ,m}^{2}\int_{a}^{b}\{U_{\nu }(\rho _{\nu ,m},t)\}^{2}~t~dt=\frac{2%
}{\pi ^{2}}\left( 1-\frac{N_{\nu }(\rho _{\nu ,m}b)}{N_{\nu }(\rho _{\nu
,m}a)}\right) \leq \frac{2}{\pi ^{2}}.
\end{equation*}%
\newline
(iv) If $\nu \geq \frac{1}{2}$, then%
\begin{equation}
\rho _{\nu ,m}^{2}\int_{1}^{b}\left\{ U_{\nu }(\rho _{\nu ,m},t)\right\}
^{2}t~dt\geq \frac{2}{\pi ^{2}}\frac{b-1}{b}\text{ \ \ \ }(m\geq 1).
\label{half}
\end{equation}%
Also,%
\begin{equation}
\rho _{0,m}^{2}\int_{1}^{b}\left\{ U_{0}(\rho _{0,m},t)\right\} ^{2}t~dt\geq 
\frac{2}{\pi ^{2}}\left( 1-\frac{N_{0}(\rho _{0,1}b)}{N_{0}(\rho _{0,1})}%
\right) \text{ \ \ \ }(m\geq 1).  \label{zero}
\end{equation}%
\newline
(v) If $a$ is the least positive zero of $U_{\nu }(\rho _{\nu ,m},\cdot )$,
where $\nu >0$, then%
\begin{equation*}
\left\vert U_{\nu }(\rho _{\nu ,m},t)\right\vert \leq \dfrac{t^{-\nu }}{\pi
\nu }\text{ \ \ \ }(0<t<a,m\geq 1).
\end{equation*}
\end{proposition}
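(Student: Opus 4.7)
My overall plan is to deal with (ii) and (iii) by direct computation, since they deliver the boundary-derivative and energy identities on which the quantitative bounds (iv) and (v) depend; part (i) is handled essentially separately.

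For (ii), observe that $U_\nu(\rho,a) = 0$ means $\lambda := J_\nu(\rho a)/Y_\nu(\rho a) = J_\nu(\rho b)/Y_\nu(\rho b)$. Substituting $J_\nu(\rho b) = \lambda Y_\nu(\rho b)$ into $\partial_t U_\nu(\rho,a) = \rho[J_\nu'(\rho a) Y_\nu(\rho b) - J_\nu(\rho b) Y_\nu'(\rho a)]$ and factoring out $Y_\nu(\rho b)/Y_\nu(\rho a)$ reduces the bracket to $J_\nu'(\rho a) Y_\nu(\rho a) - J_\nu(\rho a) Y_\nu'(\rho a) = -2/(\pi \rho a)$ by Lemma \ref{JV}(iii). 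After squaring, the relation $N_\nu(z) = (1+\lambda^2) Y_\nu(z)^2$ at both $z = \rho a$ and $z = \rho b$ converts $[Y_\nu(\rho b)/Y_\nu(\rho a)]^2$ into $N_\nu(\rho b)/N_\nu(\rho a)$; the inequality then follows from Lemma \ref{JV}(v). For (iii), multiplying the Bessel equation $u'' + u'/t + (\rho^2 - \nu^2/t^2) u = 0$ satisfied by $u(t) = U_\nu(\rho,t)$ by $2t^2 u'$ yields the exact differential identity $\frac{d}{dt}\{t^2 u'^2 + (\rho^2 t^2 - \nu^2) u^2\} = 2\rho^2 t u^2$. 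Integrating from $a$ to $b$, using $u(a) = u(b) = 0$, and plugging in the two boundary derivative values from (ii) produces precisely $(2/\pi^2)(1 - N_\nu(\rho b)/N_\nu(\rho a))$.

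Part (i) follows from the classical McMahon asymptotic (cited as \cite{McM}) for (\ref{mcm}), while (\ref{lower}) would be obtained by combining Theorem \ref{app} with an elementary Sturm-comparison base-case estimate on $\rho_{\nu,1}$ and $\rho_{\nu,2}$. For (iv), specialising (iii) to $a = 1$: when $\nu \geq 1/2$, Lemma \ref{JV}(iv) tells us that $t N_\nu(t)$ is decreasing, forcing $N_\nu(\rho b)/N_\nu(\rho) \leq 1/b$ and yielding (\ref{half}). For $\nu = 0$ the same identity reduces (\ref{zero}) to proving that $\rho \mapsto N_0(\rho b)/N_0(\rho)$ is non-increasing along $(\rho_{0,m})_{m \geq 1}$, which I would obtain by computing its logarithmic derivative via the identity $N_\nu'(z) = -2[J_\nu(z) J_{\nu+1}(z) + Y_\nu(z) Y_{\nu+1}(z)]$, a consequence of Lemma \ref{JV}(i).

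For (v), note first that $U_\nu(\rho_{\nu,m},1) = 0$ forces $a \leq 1$. Setting $\widetilde{U}_\nu(\rho,t,a) := J_\nu(\rho t) Y_\nu(\rho a) - J_\nu(\rho a) Y_\nu(\rho t)$, the factorisation used in (ii) gives $U_\nu(\rho, t) = [Y_\nu(\rho b)/Y_\nu(\rho a)]\, \widetilde{U}_\nu(\rho,t,a)$, with prefactor of modulus $\sqrt{N_\nu(\rho b)/N_\nu(\rho a)} \leq 1$. I would then argue that $t \mapsto t^\nu \widetilde{U}_\nu(\rho,t,a)$ is monotone on $(0,a)$, so its modulus is controlled by its limit as $t \to 0^+$, namely $|J_\nu(\rho a)| \Gamma(\nu) 2^\nu/(\pi \rho^\nu)$. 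Combining the standard inequality $|J_\nu(z)| \leq (z/2)^\nu/\Gamma(\nu+1)$ with $a \leq 1$ collapses this limit to $1/(\pi \nu)$. The two genuinely delicate points I anticipate are the $\nu = 0$ monotonicity needed in (iv) (which does not drop out of Lemma \ref{JV} directly, since $t N_0(t)$ is actually \emph{increasing}) and the $(0,a)$-monotonicity of $t^\nu \widetilde{U}_\nu$ in (v), which amounts to a sign statement for the shifted cross-product $J_{\nu-1}(\rho t) Y_\nu(\rho a) - J_\nu(\rho a) Y_{\nu-1}(\rho t)$ throughout that interval.
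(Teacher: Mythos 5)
Your computations for (ii) and (iii) are correct and essentially the paper's own: the identity $\frac{d}{dt}\{t^{2}u'^{2}+(\rho^{2}t^{2}-\nu^{2})u^{2}\}=2\rho^{2}tu^{2}$ is exactly Watson's Lommel integral that the paper quotes, and your factorisation in (ii) is the same Wronskian argument, except that by dividing by $Y_{\nu}(\rho a)$ you silently exclude the degenerate case $Y_{\nu}(\rho a)=0$ (the paper's pair $C_{\nu}=\alpha J_{\nu}+\beta Y_{\nu}$, $D_{\nu}=-\beta J_{\nu}+\alpha Y_{\nu}$ avoids this). The $\nu\geq\frac12$ half of (iv) is also fine. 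The rest of the plan has three genuine gaps. In (i), Theorem \ref{app} only propagates the bound from $m=2$ onward, so everything rests on proving $\rho_{\nu,1}>(\nu+\frac12)/b$, and "an elementary Sturm-comparison base-case estimate" does not deliver this: comparing $y''+(\rho^{2}+(\frac14-\nu^{2})t^{-2})y=0$ on $[1,b]$ with a constant-coefficient equation only yields $\rho_{\nu,1}^{2}>(\nu^{2}-\frac14)/b^{2}+\pi^{2}/(b-1)^{2}$, which is smaller than $(\nu+\frac12)^{2}/b^{2}$ once $\nu$ is large. The paper instead shows $U_{\nu}(\cdot,1)$ has no zero on $(0,j_{\nu,1}^{\prime}/b]$ by comparing $\{J_{\nu}(\rho)Y_{\nu}(\rho b)\}^{2}$ with $\{J_{\nu}(\rho b)Y_{\nu}(\rho)\}^{2}$ via the monotonicity of $J_{\nu}$ below $j_{\nu,1}^{\prime}$ and of $N_{\nu}$, and then uses $j_{\nu,1}^{\prime}>\sqrt{\nu(\nu+2)}$ to capture the correct $\nu$-dependence. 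In (iv) with $\nu=0$, writing down the logarithmic derivative of $\rho\mapsto N_{0}(\rho b)/N_{0}(\rho)$ merely restates the problem as the monotonicity of $t\mapsto-tN_{0}^{\prime}(t)/N_{0}(t)$; the identity $N_{0}^{\prime}=-2(J_{0}J_{1}+Y_{0}Y_{1})$ gives no sign information, and the paper has to invoke Landau's theorem, which rests on the Nicholson integral representation of $N_{0}$. You flag this as delicate, but flagging it is not proving it.

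The most serious problem is (v). Your bound requires $t\mapsto t^{\nu}\widetilde{U}_{\nu}(\rho,t,a)$ to be monotone on $(0,a)$, which by Lemma \ref{JV}(i) is equivalent to the companion cylinder function $C_{\nu-1}=Y_{\nu}(\rho a)J_{\nu-1}-J_{\nu}(\rho a)Y_{\nu-1}$ having no zero in $(0,\rho a)$, where $\rho a$ is the \emph{first} positive zero of $C_{\nu}=Y_{\nu}(\rho a)J_{\nu}-J_{\nu}(\rho a)Y_{\nu}$. That is false as a general statement about cylinder functions: for $C_{1}=Y_{1}$ the first zero of $Y_{0}$ ($\approx0.894$) lies well below the first zero of $Y_{1}$ ($\approx2.197$), so $zY_{1}(z)$ decreases past its limit $-2/\pi$ at $0^{+}$ before climbing back to $0$, and the "control by the limit at $t\to0^{+}$" step would give a wrong bound. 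So this step needs an argument specific to your $\widetilde{U}_{\nu}$, which you do not supply and which I do not believe is available. The paper circumvents the issue entirely: it considers $t^{1-2\nu}\frac{d}{dt}(t^{\nu}U_{\nu})$, whose derivative equals $-\rho^{2}t^{1-\nu}U_{\nu}$ by Lemma \ref{JV}(vi) and hence has a fixed sign on $(0,a)$; this quantity is therefore monotone and dominated by its value $c=a^{1-\nu}\,\partial U_{\nu}/\partial t(\rho,a)$ at the endpoint, and integrating $\frac{d}{dt}(t^{\nu}U_{\nu})$ back from $a$ gives $|t^{\nu}U_{\nu}|\leq|c|\int_{t}^{a}\tau^{2\nu-1}d\tau\leq|c|a^{2\nu}/(2\nu)$, with $|c|\leq 2a^{-\nu}/\pi$ from part (ii). No sign information about $C_{\nu-1}$ is needed. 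You should adopt an argument of this type for (v), and supply real proofs for the two monotonicity facts in (i) and (iv).
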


\begin{proof}
(i) Let $j_{\nu ,1}^{\prime }$ denote the first positive zero of $J_{\nu
}^{\prime }$. Then $J_{\nu }$ is strictly increasing on $(0,j_{\nu
,1}^{\prime }]$, so $J_{\nu }(\rho )/J_{\nu }(\rho b)<1$ when $\rho \in (0,$ 
$j_{\nu ,1}^{\prime }/b]$, because $b>1$. Since $N_{\nu }$ is decreasing, by
Lemma \ref{JV}(v), we see that%
\begin{equation*}
\left\{ \frac{J_{\nu }(\rho )}{J_{\nu }(\rho b)}\right\} ^{2}<\frac{N_{\nu
}(\rho )}{N_{\nu }(\rho b)}\text{ \ \ \ }\left( 0<\rho \leq \frac{j_{\nu
,1}^{\prime }}{b}\right) ,
\end{equation*}%
whence%
\begin{equation*}
\left\{ J_{\nu }(\rho )Y_{\nu }(\rho b)\right\} ^{2}<\left\{ J_{\nu }(\rho
b)Y_{\nu }(\rho )\right\} ^{2}\text{ \ \ \ }\left( 0<\rho \leq \frac{j_{\nu
,1}^{\prime }}{b}\right) .
\end{equation*}%
It follows that $U_{\nu }(\cdot ,1)$ has no zeros on $(0,$ $j_{\nu
,1}^{\prime }/b]$, and so $\rho _{\nu ,1}>j_{\nu ,1}^{\prime }/b$. We know
from p.486, (3) of Watson \cite{Wat} that $j_{\nu ,1}^{\prime }>\sqrt{\nu
(\nu +2)}$, and clearly 
\begin{equation*}
\sqrt{\nu (\nu +2)}-\nu \geq (\sqrt{5}-1)/2>1/2\text{ \ \ \ }(\nu \geq 1/2).
\end{equation*}%
Thus $\rho _{\nu ,1}>(\nu +1/2)/b$, and so (\ref{lower}) holds when $m=1,2$.
The general case now follows from Theorem \ref{app}. The limit (\ref{mcm})
is contained in asymptotic estimates of McMahon \cite{McM} (cf. Cochran \cite%
{Coc}).

(ii) Let $C_{\nu }=\alpha J_{\nu }+\beta Y_{\nu }$ and $D_{\nu }=-\beta
J_{\nu }+\alpha Y_{\nu }$, where $\alpha ^{2}+\beta ^{2}\neq 0$. Then 
\begin{equation*}
\left\{ C_{\nu }\right\} ^{2}+\left\{ D_{\nu }\right\} ^{2}=(\alpha
^{2}+\beta ^{2})\left( \left\{ J_{\nu }\right\} ^{2}+\left\{ Y_{\nu
}\right\} ^{2}\right) =(\alpha ^{2}+\beta ^{2})N_{\nu },
\end{equation*}%
and%
\begin{equation*}
\left( C_{\nu }D_{\nu }^{\prime }-C_{\nu }^{\prime }D_{\nu }\right)
(t)=(\alpha ^{2}+\beta ^{2})(J_{\nu }Y_{\nu }^{\prime }-J_{\nu }^{\prime
}Y_{\nu })(t)=\frac{2(\alpha ^{2}+\beta ^{2})}{\pi t}
\end{equation*}%
by Lemma \ref{JV}(iii).\ If $\rho $ is a zero of $C_{\nu }$, we thus see
that $\rho C_{\nu }^{\prime }(\rho )D_{\nu }(\rho )=-2(\alpha ^{2}+\beta
^{2})/\pi $, and so%
\begin{equation*}
\left\{ \rho C_{\nu }^{\prime }(\rho )\right\} ^{2}=\frac{4}{\pi ^{2}}\frac{%
(\alpha ^{2}+\beta ^{2})^{2}}{\left\{ C_{\nu }(\rho )\right\} ^{2}+\left\{
D_{\nu }(\rho )\right\} ^{2}}=\frac{4}{\pi ^{2}}\frac{\alpha ^{2}+\beta ^{2}%
}{N_{\nu }(\rho )}.
\end{equation*}%
We will now apply this formula to the cylinder function $C_{\nu }(t)$
defined by $C_{\nu }(\rho _{\nu ,m}t)=U_{\nu }(\rho _{\nu ,m},t)$. Thus $%
\alpha =Y_{\nu }(\rho _{\nu ,m}b)$, $\beta =-J_{\nu }(\rho _{\nu ,m}b)$ and
so $\alpha ^{2}+\beta ^{2}=N_{\nu }(\rho _{\nu ,m}b)$. By putting $\rho
=\rho _{\nu ,m}a$, and noting that 
\begin{equation}
\frac{\partial U_{\nu }}{\partial t}(\rho _{\nu ,m},t)=\rho _{\nu ,m}C_{\nu
}^{\prime }(\rho _{\nu ,m}t),  \label{der}
\end{equation}%
we obtain the stated equality, and the subsequent inequality follows from
Lemma \ref{JV}(v).

(iii) We know from p.135, (11) of \cite{Wat} that 
\begin{equation*}
\int_{a}^{b}\{C_{\nu }(\rho t)\}^{2}~t~dt=\left[ \frac{t^{2}}{2}\left\{
\left( 1-\frac{\nu ^{2}}{\rho ^{2}t^{2}}\right) \{C_{\nu }(\rho
t)\}^{2}+\{C_{\nu }^{\prime }(\rho t)\}^{2}\right\} \right] _{a}^{b}
\end{equation*}%
for any cylinder function $C_{\nu }$. When $C_{\nu }(\rho _{\nu ,m}t)=U_{\nu
}(\rho _{\nu ,m},t)$, we can use (\ref{der}), and then part (ii), to see that%
\begin{eqnarray*}
\rho _{\nu ,m}^{2}\int_{a}^{b}\{U_{\nu }(\rho _{\nu ,m},t)\}^{2}~t~dt &=&%
\frac{b^{2}}{2}\left\{ \frac{\partial U_{\nu }}{\partial t}(\rho _{\nu
,m},b)\right\} ^{2}-\frac{a^{2}}{2}\left\{ \frac{\partial U_{\nu }}{\partial
t}(\rho _{\nu ,m},a)\right\} ^{2} \\
&=&\frac{2}{\pi ^{2}}\left( 1-\frac{N_{\nu }(\rho _{\nu ,m}b)}{N_{\nu }(\rho
_{\nu ,m}a)}\right) \leq \frac{2}{\pi ^{2}}.
\end{eqnarray*}

(iv) If $\nu \geq \frac{1}{2}$, then we know from Lemma \ref{JV}(iv) that $%
bN_{\nu }(\rho _{\nu ,m}b)\leq N_{\nu }(\rho _{\nu ,m})$, so (\ref{half})
follows from part (iii), with $a=1$. Next, we note from Section 4.1 of
Landau \cite{Lan} and the Nicholson integral formula for $N_{0}$ (see p.444,
(1) of \cite{Wat}) that the function $t\longmapsto -tN_{0}^{\prime
}(t)/N_{0}(t)$ is strictly increasing on $(0,\infty )$, whence 
\begin{equation*}
-\frac{btN_{0}^{\prime }(bt)}{N_{0}(bt)}>-\frac{tN_{0}^{\prime }(t)}{N_{0}(t)%
},\text{ \ or \ }bN_{0}(t)N_{0}^{\prime }(bt)-N_{0}(bt)N_{0}^{\prime }(t)<0%
\text{ \ \ \ }(t>0).
\end{equation*}%
It follows that the function $t\mapsto N_{0}(bt)/N_{0}(t)$ is decreasing, so%
\begin{equation*}
1-\frac{N_{0}(\rho _{0,m}b)}{N_{0}(\rho _{0,m})}\geq 1-\frac{N_{0}(\rho
_{0,1}b)}{N_{0}(\rho _{0,1})},
\end{equation*}%
and (\ref{zero}) now follows from part (iii).

(v) Let $y(t)=\sqrt{t}U_{\nu }(\rho _{\nu ,m},t)$, where $\nu >0$. Then%
\begin{equation*}
\frac{d}{dt}\left( t^{1-2\nu }\frac{d}{dt}\left( t^{\nu -1/2}y\right)
\right) =t^{1/2-\nu }y^{\prime \prime }-(\nu ^{2}-1/4)t^{-3/2-\nu }y=-\rho
_{\nu ,m}^{2}t^{1/2-\nu }y,
\end{equation*}%
by Lemma \ref{JV}(vi). Thus the left hand side of the above equation has the
opposite sign to $y$ on $(0,a)$. Let%
\begin{equation}
c=t^{1-2\nu }\frac{d}{dt}\left( t^{\nu -1/2}y\right) \left\vert
_{t=a}\right. =a^{1/2-\nu }y^{\prime }(a)=a^{1-\nu }\frac{\partial U_{\nu }}{%
\partial t}(\rho _{\nu ,m},a).  \label{c}
\end{equation}%
If $y<0$ on $(0,a)$, then $c>0$ and $t^{1-2\nu }\dfrac{d}{dt}\left( t^{\nu
-1/2}y\right) <c$ on $(0,a)$. These last two inequalities are reversed if $%
y>0$ on $(0,a)$. In either case, since $y(a)=0$, we see that%
\begin{equation*}
\left\vert t^{\nu -1/2}y(t)\right\vert \leq \left\vert c\right\vert
\int_{t}^{a}\tau ^{2\nu -1}d\tau \leq \frac{\left\vert c\right\vert }{2\nu }%
a^{2\nu }\text{ \ \ \ }(0<t<a),
\end{equation*}%
whence%
\begin{equation*}
\left\vert U_{\nu }(\rho _{\nu ,m},t)\right\vert =\left\vert
t^{-1/2}y(t)\right\vert \leq \frac{\left\vert c\right\vert }{2\nu }\frac{%
a^{2\nu }}{t^{\nu }}\leq \frac{a^{\nu }t^{-\nu }}{\pi \nu }\leq \frac{%
t^{-\nu }}{\pi \nu }\text{ \ \ \ }(0<t<a),
\end{equation*}%
by (\ref{c}), part (ii) and the fact that $a\leq 1$.
\end{proof}

\section{Some integrals and inequalities}

It will be convenient to define%
\begin{equation*}
\psi _{\nu }(t)=t^{\nu }-t^{-\nu }\text{ \ \ }(t>0,\nu >0).
\end{equation*}

\begin{proposition}
\label{coeff}Let $0<a<s<b$.\newline
(i) If $I_{\nu }(s)=\dint_{a}^{b}f_{\nu ,s}\left( t\right) C_{\nu }\left(
\rho t\right) t~dt$, where $\nu >0$, $\rho >0$ and 
\begin{equation*}
f_{\nu ,s}\left( t\right) =\left\{ 
\begin{array}{cc}
\dfrac{\psi _{\nu }(t/a)\psi _{\nu }(b/s)}{\psi _{\nu }(b/a)} & \left( a\leq
t\leq s\right) \\ 
\text{ } & \text{ } \\ 
\dfrac{\psi _{\nu }(s/a)\psi _{\nu }(b/t)}{\psi _{\nu }(b/a)} & \left(
s<t\leq b\right)%
\end{array}%
\right. ,
\end{equation*}%
then 
\begin{equation*}
\frac{\rho ^{2}}{2\nu }I_{\nu }(s)=C_{\nu }\left( \rho s\right) -\frac{%
C_{\nu }\left( \rho a\right) \psi _{\nu }(b/s)+C_{\nu }\left( \rho b\right)
\psi _{\nu }(s/a)}{\psi _{\nu }(b/a)}.
\end{equation*}%
(ii) If $I_{0}(s)=\dint_{a}^{b}f_{0,s}\left( t\right) C_{0}\left( \rho
t\right) t~dt$, where $\rho >0$ and 
\begin{equation*}
f_{0,s}\left( t\right) =\left\{ 
\begin{array}{cc}
\dfrac{\log (t/a)\log (b/s)}{\log (b/a)} & \left( a\leq t\leq s\right) \\ 
\text{ } & \text{ } \\ 
\dfrac{\log (s/a)\log (b/t)}{\log (b/a)} & \left( s<t\leq b\right)%
\end{array}%
\right. ,
\end{equation*}%
then 
\begin{equation*}
\rho ^{2}I_{0}(s)=C_{0}\left( \rho s\right) -\frac{C_{0}\left( \rho a\right)
\log (b/s)+C_{0}\left( \rho b\right) \log (s/a)}{\log (b/a)}.
\end{equation*}
\end{proposition}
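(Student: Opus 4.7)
The plan is to recognise $f_{\nu ,s}(t)$ as, up to a multiplicative constant, the Dirichlet Green function on $[a,b]$ for the Sturm--Liouville operator $Lu := (tu')' - \nu^{2}u/t$, and then to evaluate $I_{\nu}(s)$ via Green's identity applied to the pair $f_{\nu ,s}$ and $v(t) := C_{\nu}(\rho t)$.

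As preparation I would record the following: the homogeneous equation $Lu=0$ is of Euler type, with fundamental system $\{t^{\nu},t^{-\nu}\}$ in part~(i) and $\{1,\log t\}$ in part~(ii); hence $w_{1}(t):=\psi_{\nu}(t/a)$ and $w_{2}(t):=\psi_{\nu}(b/t)$ are precisely the solutions vanishing at $t=a$ and $t=b$ respectively (with $\log(t/a),\log(b/t)$ replacing them in~(ii)). A direct computation gives the Abel identity $t\,[w_{1}w_{2}'-w_{1}'w_{2}](t)=-2\nu\,\psi_{\nu}(b/a)$ in part~(i) (and $-\log(b/a)$ in part~(ii)), together with the endpoint values $aw_{1}'(a)=2\nu=-bw_{2}'(b)$ in~(i) (respectively $aw_{1}'(a)=1=-bw_{2}'(b)$ in~(ii)). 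Inspection of the piecewise formula then shows $f_{\nu,s}(t)=w_{1}(\min(t,s))\,w_{2}(\max(t,s))/\psi_{\nu}(b/a)$, which is $2\nu$ (respectively $1$) times the classical Green function of $L$. The final preparatory fact is that, substituting $z=\rho t$ in Bessel's equation, one obtains $L[C_{\nu}(\rho t)] = -\rho^{2}t\,C_{\nu}(\rho t)$.

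The main step is to compute $\int_{a}^{b} f_{\nu,s}(t)\,Lv(t)\,dt$ in two ways. On one hand, the ODE for $v$ immediately makes it $-\rho^{2}I_{\nu}(s)$. On the other hand, since $f_{\nu,s}$ vanishes at the endpoints, a first integration by parts reduces the integral to $-\int_{a}^{b} f_{\nu,s}'(t)\,t v'(t)\,dt - \nu^{2}\int_{a}^{b} f_{\nu,s}\,v/t\,dt$. Splitting the first term at $t=s$, where $f_{\nu,s}'$ has a jump, and integrating by parts again on each subinterval, the interior contributions $(t f_{\nu,s}')' = \nu^{2} f_{\nu,s}/t$ (valid on each side because $Lw_{1}=Lw_{2}=0$) cancel the second integral above, leaving three boundary contributions:
\[
\rho^{2} I_{\nu}(s) \;=\; s v(s)\!\left[f_{\nu,s}'(s^{-})-f_{\nu,s}'(s^{+})\right] + b f_{\nu,s}'(b^{-})v(b) - a f_{\nu,s}'(a^{+})v(a).
\]
The Wronskian identity gives $s[f_{\nu,s}'(s^{-})-f_{\nu,s}'(s^{+})]=2\nu$, while the explicit derivative values at the endpoints yield $a f_{\nu,s}'(a^{+}) = 2\nu\,\psi_{\nu}(b/s)/\psi_{\nu}(b/a)$ and $b f_{\nu,s}'(b^{-}) = -2\nu\,\psi_{\nu}(s/a)/\psi_{\nu}(b/a)$. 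Dividing by $2\nu$ rearranges to the stated formula for part~(i).

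Part~(ii) follows by exactly the same scheme with $w_{1}=\log(t/a)$, $w_{2}=\log(b/t)$: the absence of the factor $2\nu$ in the endpoint derivatives and the Wronskian precisely accounts for the coefficient $\rho^{2}$ rather than $\rho^{2}/(2\nu)$ appearing in front of $I_{0}(s)$ and for $\log(\cdot)$ replacing $\psi_{\nu}(\cdot)$ throughout. No serious obstacle arises; the computation is routine once the Green-function identification is made, and the only point requiring care is bookkeeping the signs of the boundary terms and handling the derivative jump at $s$ consistently.
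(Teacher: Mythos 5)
Your argument is correct, and it takes a genuinely different route from the paper. You identify $f_{\nu,s}$ (up to the factor $2\nu$, resp.\ $1$) as the Dirichlet Green function of the Sturm--Liouville operator $Lu=(tu')'-\nu^2u/t$ on $[a,b]$, observe that $L[C_\nu(\rho\,\cdot)]=-\rho^2 t\,C_\nu(\rho t)$, and then evaluate $\int_a^b f_{\nu,s}\,Lv\,dt$ twice: once from the ODE, once by double integration by parts, the interior terms cancelling because $Lw_1=Lw_2=0$ and leaving only the derivative jump at $s$ and the two endpoint contributions. I checked your constants: the Wronskian $t[w_1w_2'-w_1'w_2]=-2\nu\,\psi_\nu(b/a)$, the jump $s[f'(s^-)-f'(s^+)]=2\nu$, and the endpoint derivatives $af'(a^+)=2\nu\psi_\nu(b/s)/\psi_\nu(b/a)$, $bf'(b^-)=-2\nu\psi_\nu(s/a)/\psi_\nu(b/a)$ are all right, and they reproduce the stated identity exactly (likewise in the logarithmic case). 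The paper instead computes $I_\nu(s)$ head-on: it splits the integral at $s$, antidifferentiates $t^{\nu+1}C_\nu(\rho t)$ and $t^{1-\nu}C_\nu(\rho t)$ via the recurrences $\frac{d}{dz}z^\nu C_\nu=z^\nu C_{\nu-1}$ and $\frac{d}{dz}z^{-\nu}C_\nu=-z^{-\nu}C_{\nu+1}$ from Lemma \ref{JV}(i), and then recombines the resulting $C_{\nu\pm1}$ terms using $C_{\nu-1}+C_{\nu+1}=\frac{2\nu}{z}C_\nu$. The paper's computation is entirely elementary and self-contained within its Bessel toolkit, whereas your Green-function reading is more conceptual: it explains a priori why the answer must be $C_\nu(\rho s)$ minus the $L$-harmonic interpolant of its boundary values, and it would generalise immediately to other potentials. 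The only things worth making explicit in a written-up version are the smoothness of $f_{\nu,s}$ away from $t=s$ justifying the second integration by parts, and the vanishing of $f_{\nu,s}$ at $a$ and $b$ which kills the first boundary term; both are immediate here.
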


\begin{proof}
(i) By Lemma \ref{JV}(i) 
\begin{eqnarray*}
\rho \int_{a}^{s}t^{\nu +1}C_{\nu }\left( \rho t\right) dt &=&\left[ t^{\nu
+1}C_{\nu +1}\left( \rho t\right) \right] _{a}^{s}, \\
-\rho \int_{a}^{s}t^{1-\nu }C_{\nu }\left( \rho t\right) dt &=&\left[
t^{1-\nu }C_{\nu -1}\left( \rho t\right) \right] _{a}^{s},
\end{eqnarray*}%
so 
\begin{equation*}
\rho \int_{a}^{s}t\psi _{\nu }(t/a)C_{\nu }\left( \rho t\right) dt=s^{\nu
+1}a^{-\nu }C_{\nu +1}\left( \rho s\right) +s^{1-\nu }a^{\nu }C_{\nu
-1}\left( \rho s\right) -\frac{2\nu }{\rho }C_{\nu }\left( \rho a\right) ,
\end{equation*}%
by Lemma \ref{JV}(ii). Similarly, 
\begin{eqnarray*}
\rho \int_{s}^{b}t\psi _{\nu }(b/t)C_{\nu }\left( \rho t\right) dt &=&\rho
b^{\nu }\int_{s}^{b}t^{1-\nu }C_{\nu }\left( \rho t\right) dt-\rho b^{-\nu
}\int_{s}^{b}t^{\nu +1}C_{\nu }\left( \rho t\right) dt \\
&=&-b^{\nu }\left[ t^{1-\nu }C_{\nu -1}\left( \rho t\right) \right]
_{s}^{b}-b^{-\nu }\left[ t^{\nu +1}C_{\nu +1}\left( \rho t\right) \right]
_{s}^{b} \\
&=&b^{\nu }s^{1-\nu }C_{\nu -1}\left( \rho s\right) +b^{-\nu }s^{\nu
+1}C_{\nu +1}\left( \rho s\right) -\frac{2\nu }{\rho }C_{\nu }\left( \rho
b\right) .
\end{eqnarray*}%
Hence%
\begin{eqnarray*}
\psi _{\nu }(b/a)\rho I_{\nu }(s) &=&\psi _{\nu }(b/s)\rho \int_{a}^{s}t\psi
_{\nu }(t/a)C_{\nu }\left( \rho t\right) dt+\psi _{\nu }(s/a)\rho
\int_{s}^{b}t\psi _{\nu }(b/t)C_{\nu }\left( \rho t\right) dt \\
&=&\psi _{\nu }(b/s)\left( s^{\nu +1}a^{-\nu }C_{\nu +1}\left( \rho s\right)
+s^{1-\nu }a^{\nu }C_{\nu -1}\left( \rho s\right) -\frac{2\nu }{\rho }C_{\nu
}\left( \rho a\right) \right) \\
&&+\psi _{\nu }(s/a)\left( b^{\nu }s^{1-\nu }C_{\nu -1}\left( \rho s\right)
+b^{-\nu }s^{\nu +1}C_{\nu +1}\left( \rho s\right) -\frac{2\nu }{\rho }%
C_{\nu }\left( \rho b\right) \right) .
\end{eqnarray*}%
The coefficients of the cylinder functions $C_{\nu +1},C_{\nu -1}$ in the
above expression are, respectively,%
\begin{eqnarray*}
a^{-\nu }\left( b^{\nu }s-b^{-\nu }s^{2\nu +1}\right) +b^{-\nu }\left(
a^{-\nu }s^{2\nu +1}-a^{\nu }s\right) &=&s\psi _{\nu }(b/a), \\
a^{\nu }\left( b^{\nu }s^{1-2\nu }-b^{-\nu }s\right) +b^{\nu }\left( a^{-\nu
}s-a^{\nu }s^{1-2\nu }\right) &=&s\psi _{\nu }(b/a).
\end{eqnarray*}%
Thus we can again use Lemma \ref{JV}(ii) to see that%
\begin{equation*}
\psi _{\nu }(b/a)\rho I_{\nu }(s)=s\psi _{\nu }(b/a)\frac{2\nu }{\rho s}%
C_{\nu }\left( \rho s\right) -\frac{2\nu }{\rho }C_{\nu }\left( \rho
a\right) \psi _{\nu }(b/s)-\frac{2\nu }{\rho }C_{\nu }\left( \rho b\right)
\psi _{\nu }(s/a),
\end{equation*}%
as claimed.

(ii) By Lemma \ref{JV}(i)%
\begin{eqnarray*}
\rho \int_{a}^{s}\log (t/a)C_{0}\left( \rho t\right) t~dt &=&\left[
tC_{1}\left( \rho t\right) \log (t/a)\right] _{a}^{s}-\int_{a}^{s}C_{1}%
\left( \rho t\right) dt \\
&=&sC_{1}\left( \rho s\right) \log (s/a)+\rho ^{-1}\left[ C_{0}\left( \rho
t\right) \right] _{a}^{s}, \\
\rho \int_{s}^{b}\log (b/t)C_{0}\left( \rho t\right) t~dt &=&\left[
tC_{1}\left( \rho t\right) \log (b/t)\right] _{s}^{b}+\int_{s}^{b}C_{1}%
\left( \rho t\right) dt \\
&=&-sC_{1}\left( \rho s\right) \log (b/s)-\rho ^{-1}\left[ C_{0}\left( \rho
t\right) \right] _{s}^{b}.
\end{eqnarray*}%
Hence%
\begin{eqnarray*}
\rho ^{2}\log (b/a)I_{0}(s) &=&\rho ^{2}\log (b/s)\int_{a}^{s}\log
(t/a)C_{0}\left( \rho t\right) t~dt+\rho ^{2}\log (s/a)\int_{s}^{b}\log
(b/t)C_{0}\left( \rho t\right) t~dt \\
&=&\log (b/s)\left( \rho sC_{1}\left( \rho s\right) \log (s/a)+C_{0}\left(
\rho s\right) -C_{0}\left( \rho a\right) \right) \\
&&+\log (s/a)\left( -\rho sC_{1}\left( \rho s\right) \log (b/s)-C_{0}\left(
\rho b\right) +C_{0}\left( \rho s\right) \right) \\
&=&\log (b/a)C_{0}\left( \rho s\right) -\log (b/s)C_{0}\left( \rho a\right)
-\log (s/a)C_{0}\left( \rho b\right) ,
\end{eqnarray*}%
as required.
\end{proof}

\begin{proposition}
\label{CS}If $0<a\leq s\leq b$ and $U_{\nu }(\rho _{\nu ,m},a)=0$,\ then 
\begin{equation}
\left\vert U_{\nu }(\rho _{\nu ,m},s)\right\vert \leq \frac{\rho _{\nu ,m}}{%
2\pi \nu }b\text{ \ \ }(\nu >0),\text{ \ \ \ }\left\vert U_{0}(\rho
_{0,m},s)\right\vert \leq \frac{\rho _{0,m}}{4\pi }b\log \frac{b}{a}
\label{CSa}
\end{equation}%
and%
\begin{equation}
\left\vert \frac{\partial U_{\nu }}{\partial t}(\rho _{\nu ,m},s)\right\vert
\leq \frac{\rho _{\nu ,m}}{\pi }\dfrac{b}{a}\text{ \ \ \ }(\nu \geq 0).
\label{CSb}
\end{equation}
\end{proposition}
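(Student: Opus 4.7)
The strategy is to apply Proposition \ref{coeff} with $\rho = \rho_{\nu,m}$ to the cylinder function $C_\nu$ satisfying $C_\nu(\rho_{\nu,m}t) = U_\nu(\rho_{\nu,m},t)$. Since $U_\nu(\rho_{\nu,m},b) = 0$ directly from the definition of $U_\nu$ and $U_\nu(\rho_{\nu,m},a) = 0$ by hypothesis, the two boundary terms in that proposition drop out, leaving the clean representation
\[
U_\nu(\rho_{\nu,m},s) = \frac{\rho_{\nu,m}^{2}}{2\nu}\int_{a}^{b} f_{\nu,s}(t)\,U_\nu(\rho_{\nu,m},t)\,t\,dt \qquad (\nu > 0),
\]
and the corresponding identity $U_{0}(\rho_{0,m},s) = \rho_{0,m}^{2}\int_{a}^{b} f_{0,s}(t)\,U_{0}(\rho_{0,m},t)\,t\,dt$. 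Combined with the $L^{2}$-bound $\rho_{\nu,m}^{2}\int_{a}^{b} U_\nu(\rho_{\nu,m},t)^{2}\,t\,dt \leq 2/\pi^{2}$ from Proposition \ref{Unu}(iii), Cauchy--Schwarz reduces both inequalities in (\ref{CSa}) to a uniform bound on the kernel.

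For $\nu > 0$ I would prove $f_{\nu,s}(t) \leq 1$ on $[a,b]$. On $[a,s]$ this amounts to the submultiplicative inequality $\psi_\nu(x)\psi_\nu(y) \leq \psi_\nu(xy)$ for $x, y \geq 1$; expansion gives $\psi_\nu(xy) - \psi_\nu(x)\psi_\nu(y) = (xy)^{-\nu}(x^{2\nu} + y^{2\nu} - 2) \geq 0$ by AM--GM. The interval $[s,b]$ is symmetric. For $\nu = 0$, AM--GM applied to $\log(s/a) + \log(b/s) = \log(b/a)$ gives $f_{0,s} \leq \tfrac14\log(b/a)$. Integrating against $t\,dt$ on $[a,b]$ and plugging into the Cauchy--Schwarz estimate then yields both parts of (\ref{CSa}).

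For (\ref{CSb}) I would differentiate the integral representation with respect to $s$. Continuity of $f_{\nu,s}$ in $t$ at $t = s$ kills the Leibniz boundary terms, and writing $\tilde\psi_\nu(x) := x^{\nu} + x^{-\nu}$ (so that $\psi_\nu'(x) = \nu\tilde\psi_\nu(x)/x$) gives
\[
\partial_s U_\nu(\rho_{\nu,m},s) = \frac{\rho_{\nu,m}^{2}}{2s\,\psi_\nu(b/a)}\int_{a}^{b}\tilde g_{\nu,s}(t)\,U_\nu(\rho_{\nu,m},t)\,t\,dt,
\]
with $\tilde g_{\nu,s}(t) = -\tilde\psi_\nu(b/s)\psi_\nu(t/a)$ on $(a,s)$ and $\tilde\psi_\nu(s/a)\psi_\nu(b/t)$ on $(s,b)$. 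The pivotal estimate is $|\tilde g_{\nu,s}(t)| \leq 2\psi_\nu(b/a)$, which I plan to obtain from the product-to-sum identity $\tilde\psi_\nu(x)\psi_\nu(y) = \psi_\nu(xy) + \psi_\nu(y/x)$ (immediate from expansion). On each region the two arguments arising this way, together with their reciprocals, all lie in $[a/b, b/a]$, so each $\psi_\nu$ term has absolute value at most $\psi_\nu(b/a)$, and a short sign analysis closes the bound. Cauchy--Schwarz with Proposition \ref{Unu}(iii) then delivers $|\partial_t U_\nu(\rho_{\nu,m},s)| \leq \rho_{\nu,m} b/(\pi s) \leq \rho_{\nu,m} b/(\pi a)$. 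The case $\nu = 0$ is handled analogously, with kernel $\tilde h_{0,s}(t) = -\log(t/a)$ on $(a,s)$ and $\log(b/t)$ on $(s,b)$, and the trivial bound $|\tilde h_{0,s}| \leq \log(b/a)$.

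The main obstacle is (\ref{CSb}): correctly assembling the differentiated integral representation (the Leibniz boundary terms cancel only because $f_{\nu,s}$ itself is continuous across $t = s$, even though its $s$-derivative is not) and then establishing the uniform bound $|\tilde g_{\nu,s}| \leq 2\psi_\nu(b/a)$. Both rest on the multiplicative identities for $\psi_\nu$, and the sign bookkeeping in $\tilde\psi_\nu(x)\psi_\nu(y) = \psi_\nu(xy) + \psi_\nu(y/x)$ must be handled carefully over the range where $xy$ or $y/x$ dips below $1$.
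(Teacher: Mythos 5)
Your proposal is correct and follows essentially the same route as the paper: the integral representation from Proposition \ref{coeff} with both boundary terms vanishing, Cauchy--Schwarz against the $L^{2}$ bound of Proposition \ref{Unu}(iii), and the identity $\left(x^{\nu}+x^{-\nu}\right)\psi _{\nu }(y)=\psi _{\nu }(xy)+\psi _{\nu }(y/x)$ to bound the differentiated kernel by $2\psi _{\nu }(b/a)$. The only cosmetic difference is that you establish $f_{\nu ,s}\leq 1$ via submultiplicativity of $\psi _{\nu }$ rather than by locating the maximum at $t=s=\sqrt{ab}$ as the paper does; both give the same constants.
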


\begin{proof}
We may assume that $s\in (a,b)$, since (\ref{CSa}) trivially holds when $%
s\in \{a,b\}$ and (\ref{CSb}) extends by continuity to the endpoints. Let 
\begin{equation*}
I_{\nu }(s)=\int_{a}^{b}f_{\nu ,s}\left( t\right) U_{\nu }(\rho _{\nu
,m},t)~t~dt,
\end{equation*}%
where $\nu \geq 0\ $and$\ f_{\nu ,s}$ is defined as in the previous
proposition. It is easy to see that 
\begin{equation*}
\max_{t\in \lbrack a,b]}f_{\nu ,s}(t)=f_{\nu ,s}(s)\text{ \ \ and \ \ }%
\max_{s\in \lbrack a,b]}f_{\nu ,s}(s)=f_{\nu ,\sqrt{ab}}(\sqrt{ab}).
\end{equation*}%
Further, 
\begin{equation*}
f_{\nu ,\sqrt{ab}}(\sqrt{ab})=\frac{\left\{ \psi _{\nu }(\sqrt{b/a})\right\}
^{2}}{\psi _{\nu }(b/a)}=\frac{1-(a/b)^{\nu }}{1+(a/b)^{\nu }}\leq 1\text{ \
\ \ }(\nu >0)
\end{equation*}%
and 
\begin{equation*}
f_{0,\sqrt{ab}}(\sqrt{ab})=\frac{\left\{ \log \sqrt{b/a}\right\} ^{2}}{\log
b/a}=\frac{\log (b/a)}{4}.
\end{equation*}%
Thus, by the Cauchy-Schwarz inequality and Proposition \ref{Unu}(iii), 
\begin{eqnarray}
\left\vert I_{\nu }(s)\right\vert &\leq &\left\{ \int_{a}^{b}\{f_{\nu
,s}(t)\}^{2}~t~dt\right\} ^{1/2}\left\{ \int_{a}^{b}\{U_{\nu }(\rho _{\nu
,m},t)\}^{2}~t~dt\right\} ^{1/2}  \notag \\
&\leq &\left\{ \frac{b^{2}-a^{2}}{2}\right\} ^{1/2}\left\{ \frac{2}{\pi
^{2}\rho _{\nu ,m}^{2}}\right\} ^{1/2}\leq \frac{b}{\pi \rho _{\nu ,m}}\text{
\ \ \ }(\nu >0),  \label{P1}
\end{eqnarray}%
and similarly%
\begin{equation}
\left\vert I_{0}(s)\right\vert \leq \frac{b\log (b/a)}{4\pi \rho _{0,m}}.
\label{P2}
\end{equation}

Next, we observe that%
\begin{equation}
I_{\nu }^{\prime }(s)=\int_{a}^{b}g_{\nu ,s}(t)U_{\nu }(\rho _{\nu
,m},t)~t~dt\text{ \ \ \ }(a<s<b),  \notag
\end{equation}%
where%
\begin{equation*}
g_{\nu ,s}\left( t\right) =\frac{d}{ds}f_{\nu ,s}(t)=\left\{ 
\begin{array}{cc}
-\dfrac{\nu }{s}\dfrac{\psi _{\nu }(t/a)}{\psi _{\nu }(b/a)}\left( \left( 
\dfrac{b}{s}\right) ^{\nu }+\left( \dfrac{b}{s}\right) ^{-\nu }\right) & 
\left( a\leq t<s\right) \\ 
\text{ } & \text{ } \\ 
\dfrac{\nu }{s}\dfrac{\psi _{\nu }(b/t)}{\psi _{\nu }(b/a)}\left( \left( 
\dfrac{s}{a}\right) ^{\nu }+\left( \dfrac{s}{a}\right) ^{-\nu }\right) & 
\left( s<t\leq b\right)%
\end{array}%
\right.
\end{equation*}%
when $\nu >0$, and%
\begin{equation*}
g_{0,s}\left( t\right) =\frac{d}{ds}f_{0,s}(t)=\left\{ 
\begin{array}{cc}
-\dfrac{1}{s}\dfrac{\log (t/a)}{\log (b/a)} & \left( a\leq t<s\right) \\ 
\text{ } & \text{ } \\ 
\dfrac{1}{s}\dfrac{\log (b/t)}{\log (b/a)} & \left( s<t\leq b\right)%
\end{array}%
\right. .
\end{equation*}%
Since%
\begin{equation*}
\psi _{\nu }(t/a)\leq \psi _{\nu }(s/a)\text{ \ \ }(a\leq t<s)\text{ \ \ and
\ \ }\psi _{\nu }(b/t)\leq \psi _{\nu }(b/s)\text{ \ \ \ }(s<t\leq b),
\end{equation*}%
and 
\begin{eqnarray*}
\psi _{\nu }\left( \frac{s}{a}\right) \left( \left( \dfrac{b}{s}\right)
^{\nu }+\left( \dfrac{b}{s}\right) ^{-\nu }\right) &=&\psi _{\nu }\left( 
\frac{b}{a}\right) +\psi _{\nu }\left( \frac{s^{2}}{ab}\right) \leq 2\psi
_{\nu }\left( \frac{b}{a}\right) , \\
\psi _{\nu }\left( \frac{b}{s}\right) \left( \left( \dfrac{s}{a}\right)
^{\nu }+\left( \dfrac{s}{a}\right) ^{-\nu }\right) &=&\psi _{\nu }\left( 
\frac{b}{a}\right) +\psi _{\nu }\left( \frac{ab}{s^{2}}\right) \leq 2\psi
_{\nu }\left( \frac{b}{a}\right) ,
\end{eqnarray*}%
we see that $\left\vert g_{\nu ,s}\left( t\right) \right\vert \leq 2\nu
/s\leq 2\nu /a$ when $\nu >0$. Thus, by the Cauchy-Schwarz inequality,%
\begin{equation}
\left\vert I_{\nu }^{\prime }(s)\right\vert \leq \frac{2\nu }{a}\left\{ 
\frac{b^{2}-a^{2}}{2}\right\} ^{1/2}\left\{ \frac{2}{\pi ^{2}\rho _{\nu
,m}^{2}}\right\} ^{1/2}\leq \frac{2\nu }{\pi \rho _{\nu ,m}}\dfrac{b}{a}%
\text{ \ \ \ }(\nu >0).  \label{P3}
\end{equation}%
Similarly, since clearly $\left\vert g_{0,s}\left( t\right) \right\vert \leq
1/s\leq 1/a$, we have%
\begin{equation}
\left\vert I_{0}^{\prime }(s)\right\vert \leq \frac{1}{a}\left\{ \frac{%
b^{2}-a^{2}}{2}\right\} ^{1/2}\left\{ \frac{2}{\pi ^{2}\rho _{0,m}^{2}}%
\right\} ^{1/2}\leq \frac{1}{\pi \rho _{0,m}}\dfrac{b}{a}.  \label{P4}
\end{equation}

The inequalities (\ref{CSa}) and (\ref{CSb}) follow from (\ref{P1}) - (\ref%
{P4}), since we can put $C_{\nu }(\rho t)=U_{\nu }(\rho ,t)$\ in Proposition %
\ref{coeff} to see that 
\begin{equation*}
U_{\nu }(\rho _{\nu ,m},s)=\dfrac{\rho _{\nu ,m}^{2}}{2\nu }I_{\nu }(s)\text{
\ \ }(\nu >0)\text{ \ \ and \ \ }U_{0}(\rho _{0,m},s)=\rho
_{0,m}^{2}I_{0}(s).
\end{equation*}
\end{proof}

\section{Intermediate series expansions}

We recall the following result from Section 1.11 of Titchmarsh \cite{Tit}.
(We have reformulated it using equation (1.6.4) there and Proposition \ref%
{Unu}(iii) above.)

\begin{proposition}
\label{SL}Let $f:$ $[1,b]\rightarrow \mathbb{R}$ be a continuous function of
bounded variation and let 
\begin{equation*}
a_{m}=\frac{1}{\int_{1}^{b}\left\{ U_{\nu }\left( \rho _{\nu ,m},\tau
\right) \right\} ^{2}\tau ~d\tau }\int_{1}^{b}f\left( \tau \right) U_{\nu
}\left( \rho _{\nu ,m},\tau \right) \tau ~d\tau .
\end{equation*}%
Then the series $\sum_{m=1}^{\infty }a_{m}U_{\nu }\left( \rho _{\nu
,m},t\right) $ converges pointwise to $f\left( t\right) $ on $(1,b)$.
\end{proposition}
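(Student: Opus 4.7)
The plan is to recognise $\{U_{\nu}(\rho_{\nu,m},\cdot)\}_{m\geq 1}$ as the eigenfunctions of a regular Sturm--Liouville problem on $[1,b]$, and then to invoke the classical pointwise expansion theorem in Section 1.11 of Titchmarsh \cite{Tit}. Since the proposition is announced in the paper as a mere reformulation of a known result, the task is really one of bookkeeping: matching weights, boundary conditions and normalisations.

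First I would set up the Sturm--Liouville framework. For each fixed $\rho>0$, the function $t\mapsto U_{\nu}(\rho,t)$ is a cylinder function of argument $\rho t$, so by Lemma \ref{JV}(vi) the function $y(t)=\sqrt{t}\,U_{\nu}(\rho,t)$ satisfies
$$y''+\Bigl(\rho^{2}+\dfrac{\tfrac{1}{4}-\nu^{2}}{t^{2}}\Bigr)y=0\qquad (t>0).$$
Equivalently, $u(t)=U_{\nu}(\rho,t)$ solves the self-adjoint Bessel-type equation $(tu')'+(\rho^{2}t-\nu^{2}/t)u=0$ on $(0,\infty)$, with natural weight $t$. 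The boundary conditions are $u(1)=0$, which is exactly the defining equation of the zeros $\rho_{\nu,m}$, and $u(b)=0$, which is built into the definition of $U_{\nu}$. Because $[1,b]$ avoids the singular point $t=0$, this is a \emph{regular} Sturm--Liouville problem on $[1,b]$ with Dirichlet conditions, whose eigenvalues are $\rho_{\nu,m}^{2}$ and whose eigenfunctions are $U_{\nu}(\rho_{\nu,m},\cdot)$.

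I would then apply Titchmarsh's theorem directly: for a regular problem of this type, the eigenfunction expansion of any continuous function of bounded variation on $[1,b]$ converges pointwise to the function on the open interval $(1,b)$. The coefficients are the standard ones with weight $t\,d\tau$, matching the formula in the statement, since equation (1.6.4) of \cite{Tit} records precisely the weight $t$ that appears in the self-adjoint form above. Finally, Proposition \ref{Unu}(iii), applied with $a=1$, yields the closed-form normaliser
$$\int_{1}^{b}\{U_{\nu}(\rho_{\nu,m},\tau)\}^{2}\,\tau\,d\tau=\dfrac{2}{\pi^{2}\rho_{\nu,m}^{2}}\Bigl(1-\dfrac{N_{\nu}(\rho_{\nu,m}b)}{N_{\nu}(\rho_{\nu,m})}\Bigr),$$
which is finite and non-zero, so the denominator in the definition of $a_{m}$ makes sense and agrees with Titchmarsh's normalisation. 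Since the result is then a direct citation, there is no genuine obstacle; the only points requiring care are verifying the self-adjoint weight (addressed via Lemma \ref{JV}(vi)) and confirming that the normaliser does not vanish (guaranteed by Proposition \ref{Unu}(iii), together with the monotonicity of $N_{\nu}$ from Lemma \ref{JV}(v)).
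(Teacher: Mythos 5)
Your proposal is correct and takes essentially the same route as the paper, which presents Proposition \ref{SL} as a direct citation of Section 1.11 of Titchmarsh \cite{Tit}, reformulated via equation (1.6.4) there and Proposition \ref{Unu}(iii). The Sturm--Liouville bookkeeping you supply (the self-adjoint form with weight $t$, Dirichlet conditions at $t=1$ and $t=b$, and the non-vanishing of the normaliser via Lemma \ref{JV}(v)) is exactly the implicit content of that reformulation.
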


Formula (\ref{Af}) below is stated without proof by Carslaw \cite{Car}.

\begin{proposition}
\label{Uexp}Let $1<s<b$.\newline
(a)\ If $\nu >0$, then 
\begin{equation}
2\nu \sum\limits_{m=1}^{\infty }\frac{U_{\nu }(\rho _{\nu ,m},s)U_{\nu
}(\rho _{\nu ,m},t)}{\rho _{\nu ,m}^{2}\int_{1}^{b}\left\{ U_{\nu }\left(
\rho _{\nu ,m},\tau \right) \right\} ^{2}\tau ~d\tau }=\left\{ 
\begin{array}{cc}
\dfrac{\psi _{\nu }(t)\psi _{\nu }(b/s)}{\psi _{\nu }(b)} & \left( 1\leq
t\leq s\right) \\ 
\text{ } & \text{ } \\ 
\dfrac{\psi _{\nu }(s)\psi _{\nu }(b/t)}{\psi _{\nu }(b)} & \left( s<t\leq
b\right)%
\end{array}%
\right. ,  \label{Af}
\end{equation}%
and the series converges uniformly for $t\in \lbrack 1,b]$.\newline
(b) In the case where $\nu =0$,%
\begin{equation}
\sum\limits_{m=1}^{\infty }\frac{U_{0}(\rho _{0,m},s)U_{0}(\rho _{0,m},t)}{%
\rho _{0,m}^{2}\int_{1}^{b}\left\{ U_{0}\left( \rho _{0,m},\tau \right)
\right\} ^{2}\tau ~d\tau }=\left\{ 
\begin{array}{cc}
\dfrac{(\log t)\log (b/s)}{\log b} & (1<t\leq s) \\ 
\text{ } & \text{ } \\ 
\dfrac{(\log s)\log (b/t)}{\log b} & (s<t<b)%
\end{array}%
\right. ,  \label{Bf}
\end{equation}%
and the series converges uniformly for $t\in \lbrack 1,b]$.
\end{proposition}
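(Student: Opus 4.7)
The right-hand sides of (\ref{Af}) and (\ref{Bf}) are exactly the piecewise functions $f_{\nu,s}$ and $f_{0,s}$ from Proposition \ref{coeff} specialised to $a=1$, so the plan is to identify these identities as eigenfunction expansions of $f_{\nu,s}$ in the orthogonal system $\{U_\nu(\rho_{\nu,m},\cdot)\}_{m\ge 1}$ and derive them from Proposition \ref{SL}.

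For part (a), in Proposition \ref{coeff}(i) I take $a=1$, $\rho=\rho_{\nu,m}$, and the cylinder function $C_\nu(\rho t)=U_\nu(\rho,t)$ (that is, $\alpha=Y_\nu(\rho b)$, $\beta=-J_\nu(\rho b)$). Since $U_\nu(\rho_{\nu,m},1)=0$ by definition of $\rho_{\nu,m}$ and $U_\nu(\rho,b)\equiv 0$ for every $\rho$, both boundary contributions in that formula disappear and it collapses to
\[
\int_{1}^{b} f_{\nu,s}(t)\,U_\nu(\rho_{\nu,m},t)\,t\,dt \;=\; \frac{2\nu}{\rho_{\nu,m}^{2}}\,U_\nu(\rho_{\nu,m},s).
\]
The function $f_{\nu,s}$ is continuous on $[1,b]$ (its two branches meet at $t=s$) and piecewise smooth, so of bounded variation, and Proposition \ref{SL} delivers the pointwise convergence of (\ref{Af}) on $(1,b)$. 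Part (b) is strictly parallel: apply Proposition \ref{coeff}(ii) with $a=1$ and $\rho=\rho_{0,m}$; the boundary terms again vanish, and Proposition \ref{SL} together with the lower estimate (\ref{zero}) yields (\ref{Bf}) pointwise.

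The main obstacle is upgrading pointwise convergence to uniform convergence on $[1,b]$. The endpoints cause no trouble since $U_\nu(\rho_{\nu,m},1)=U_\nu(\rho_{\nu,m},b)=0$, and the corner $t=s$ lies interior to the interval of continuity. The natural bounds on the general term come from Proposition \ref{CS} (controlling $|U_\nu(\rho_{\nu,m},s)|$ and $|U_\nu(\rho_{\nu,m},t)|$) together with Proposition \ref{Unu}(iv) (a uniform positive lower bound for $\rho_{\nu,m}^2\int_1^b\{U_\nu(\rho_{\nu,m},\tau)\}^2\tau\,d\tau$), but these are too crude on their own: the Proposition \ref{CS} estimate grows linearly in $\rho_{\nu,m}$, giving only an $O(1)$ per-term bound. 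Sharper control is obtained by combining the Bessel asymptotics $J_\nu(z),Y_\nu(z)=O(z^{-1/2})$ at infinity with the zero spacing $\rho_{\nu,m}\sim \pi m/(b-1)$ from (\ref{mcm}), which improves $|U_\nu(\rho_{\nu,m},t)|$ to $O(\rho_{\nu,m}^{-1})$ uniformly in $t\in[1,b]$. Each term of the series is then $O(m^{-2})$, and the Weierstrass M-test completes the uniform convergence; the same argument handles part (b).
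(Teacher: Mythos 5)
Your proposal is correct and follows essentially the same route as the paper: compute the coefficients from Proposition \ref{coeff} with $a=1$ (the boundary terms vanishing since $U_{\nu}(\rho_{\nu,m},1)=U_{\nu}(\rho_{\nu,m},b)=0$), invoke Proposition \ref{SL} for pointwise convergence, and obtain uniform convergence from the $O(z^{-1/2})$ decay of cylinder functions applied to each factor of $U_{\nu}$, the lower bound on the normalisation integral from Proposition \ref{Unu}(iv) and (\ref{zero}), and $\rho_{\nu,m}\gtrsim m$. The only quibble is the side remark that the Proposition \ref{CS} bound would give an $O(1)$ per-term bound (it actually gives $O(\rho_{\nu,m}^{2})$), but since you correctly discard that route this does not affect the argument.
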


\begin{proof}
We know from p.199 of \cite{Wat} that any cylinder function $C_{\nu }$
satisfies $C_{\nu }(t)=O(t^{-1/2})$ as $t\rightarrow \infty $. Applying this
estimate separately to each factor in the definition of $U_{\nu }(\rho _{\nu
,m},t)$, we see that $\left\vert U_{\nu }(\rho _{\nu ,m},\cdot )\right\vert
\leq C(b,\nu )/\rho _{\nu ,m}$ on $[1,b]$. Thus, by parts (i) and (iv) of
Proposition \ref{Unu}, the series in (\ref{Af}) converges uniformly on $%
[1,b] $. Part (a) now follows from Proposition \ref{SL} and the fact that 
\begin{equation*}
\int_{1}^{b}f_{\nu ,s}\left( \tau \right) U_{\nu }\left( \rho _{\nu ,m},\tau
\right) \tau ~d\tau =\frac{2\nu }{\rho _{\nu ,m}^{2}}U_{\nu }\left( \rho
_{\nu ,m},s\right) ,
\end{equation*}%
by Proposition \ref{coeff}(i),$\ $where $f_{\nu ,s}\left( t\right) $ denotes
the right hand side of (\ref{Af}).

Part (b) follows in similar fashion from Proposition \ref{coeff}(ii)\textbf{.%
}
\end{proof}

\bigskip

If $\lambda >0$, let $P_{n}^{(\lambda )}$ be the usual ultraspherical
(Gegenbauer) polynomial defined by the expansion 
\begin{equation*}
(1-2tu+u^{2})^{-\lambda }=\sum\limits_{n=0}^{\infty }P_{n}^{(\lambda
)}(t)u^{n}\text{ \ \ \ }(\left\vert t\right\vert \leq 1,\left\vert
u\right\vert <1).
\end{equation*}%
(See Section 4.7 of Szeg\"{o} \cite{Sze}, or Chapter IV of Stein and Weiss 
\cite{SW}.) We note for future reference that%
\begin{equation}
\left\vert P_{n}^{(\lambda )}(t)\right\vert \leq P_{n}^{(\lambda )}(1)=%
\binom{n+2\lambda -1}{n}\text{ \ \ \ }(\left\vert t\right\vert \leq 1)
\label{pnl}
\end{equation}%
(see Lemma 6(i) of \cite{GR1}). Also, let $T_{n}(t)$ be the Chebychev
polynomial given by $\cos (n\cos ^{-1}t)$ when $\left\vert t\right\vert \leq
1$, and let 
\begin{equation*}
\nu _{n}=n+\frac{N-3}{2}\text{ \ \ \ }(n\geq 0).
\end{equation*}

We will need the following known expansions for the Green function $%
G_{A_{b}^{\prime }}(\cdot ,\cdot )$ of the annular region $A_{b}^{\prime }$
in $\mathbb{R}^{N-1}$ $(N\geq 3)$.

\begin{proposition}
\label{B}Suppose that $y^{\prime }\in A_{b}^{\prime }$.\newline
(i) Let $N\geq 4$. If $1<\left\Vert x^{\prime }\right\Vert <\left\Vert
y^{\prime }\right\Vert $, then 
\begin{equation}
G_{A_{b}^{\prime }}(x^{\prime },y^{\prime })=\left( \left\Vert x^{\prime
}\right\Vert \left\Vert y^{\prime }\right\Vert \right) ^{\frac{3-N}{2}%
}\sum_{n=0}^{\infty }P_{n}^{\left( \frac{N-3}{2}\right) }\left( \dfrac{%
\left\langle x^{\prime },y^{\prime }\right\rangle }{\left\Vert x^{\prime
}\right\Vert \left\Vert y^{\prime }\right\Vert }\right) \frac{\psi _{\nu
_{n}}(\left\Vert x^{\prime }\right\Vert )\psi _{\nu _{n}}\left( b/\left\Vert
y^{\prime }\right\Vert \right) }{\psi _{\nu _{n}}(b)};  \label{R1}
\end{equation}%
and, if $\left\Vert y^{\prime }\right\Vert <\left\Vert x^{\prime
}\right\Vert <b$,\ then%
\begin{equation}
G_{A_{b}^{\prime }}(x^{\prime },y^{\prime })=\left( \left\Vert x^{\prime
}\right\Vert \left\Vert y^{\prime }\right\Vert \right) ^{\frac{3-N}{2}%
}\sum_{n=0}^{\infty }P_{n}^{\left( \frac{N-3}{2}\right) }\left( \dfrac{%
\left\langle x^{\prime },y^{\prime }\right\rangle }{\left\Vert x^{\prime
}\right\Vert \left\Vert y^{\prime }\right\Vert }\right) \frac{\psi _{\nu
_{n}}(\left\Vert y^{\prime }\right\Vert )\psi _{\nu _{n}}\left( b/\left\Vert
x^{\prime }\right\Vert \right) }{\psi _{\nu _{n}}(b)}.  \label{S1}
\end{equation}%
(ii)\ Let $N=3$. If $1<\left\Vert x^{\prime }\right\Vert <\left\Vert
y^{\prime }\right\Vert $, then%
\begin{equation}
G_{A_{b}^{\prime }}(x^{\prime },y^{\prime })=\frac{\log (b/\left\Vert
y^{\prime }\right\Vert )}{\log b}\log \left\Vert x^{\prime }\right\Vert
+\sum_{n=1}^{\infty }\frac{1}{n}T_{n}\left( \frac{\left\langle x^{\prime
},y^{\prime }\right\rangle }{\left\Vert x^{\prime }\right\Vert \left\Vert
y^{\prime }\right\Vert }\right) \frac{\psi _{n}(\left\Vert x^{\prime
}\right\Vert )\psi _{n}\left( b/\left\Vert y^{\prime }\right\Vert \right) }{%
\psi _{n}(b)};  \label{R2}
\end{equation}%
and, if $\left\Vert y^{\prime }\right\Vert <\left\Vert x^{\prime
}\right\Vert <b$,\ then%
\begin{equation}
G_{A_{b}^{\prime }}(x^{\prime },y^{\prime })=\frac{\log (b/\left\Vert
x^{\prime }\right\Vert )}{\log b}\log \left\Vert y^{\prime }\right\Vert
+\sum_{n=1}^{\infty }\frac{1}{n}T_{n}\left( \frac{\left\langle x^{\prime
},y^{\prime }\right\rangle }{\left\Vert x^{\prime }\right\Vert \left\Vert
y^{\prime }\right\Vert }\right) \frac{\psi _{n}(\left\Vert y^{\prime
}\right\Vert )\psi _{n}\left( b/\left\Vert x^{\prime }\right\Vert \right) }{%
\psi _{n}(b)}.  \label{S2}
\end{equation}
\end{proposition}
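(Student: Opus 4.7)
The plan is to show that each of the four series has the three defining properties of $G_{A_{b}^{\prime }}(\cdot ,y^{\prime })$: it is harmonic in $x^{\prime }$ on $A_{b}^{\prime }\setminus \{y^{\prime }\}$, vanishes on $\partial A_{b}^{\prime }$, and differs from the $\mathbb{R}^{N-1}$ fundamental solution near $y^{\prime }$ by a function that extends harmonically across $y^{\prime }$. Uniqueness of the Green function will then deliver the identity, and the symmetry $G_{A_{b}^{\prime }}(x^{\prime },y^{\prime })=G_{A_{b}^{\prime }}(y^{\prime },x^{\prime })$ transfers (\ref{R1}) to (\ref{S1}) and (\ref{R2}) to (\ref{S2}).

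For harmonicity, observe that $\left\Vert x^{\prime }\right\Vert ^{(3-N)/2\pm \nu _{n}}$ equals $\left\Vert x^{\prime }\right\Vert ^{n}$ or $\left\Vert x^{\prime }\right\Vert ^{-(n+N-3)}$, so writing $\psi _{\nu _{n}}(\left\Vert x^{\prime }\right\Vert )=\left\Vert x^{\prime }\right\Vert ^{\nu _{n}}-\left\Vert x^{\prime }\right\Vert ^{-\nu _{n}}$ exhibits each summand of (\ref{R1}) as a linear combination of the solid zonal harmonics $\left\Vert x^{\prime }\right\Vert ^{n}P_{n}^{((N-3)/2)}(\cos \theta )$ and $\left\Vert x^{\prime }\right\Vert ^{-(n+N-3)}P_{n}^{((N-3)/2)}(\cos \theta )$, with $\theta $ the angle between $x^{\prime }$ and $y^{\prime }$; these are harmonic on $\mathbb{R}^{N-1}\setminus \{0^{\prime }\}$. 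The estimate (\ref{pnl}) is polynomial in $n$, while the summand carries the exponential factor $(\left\Vert x^{\prime }\right\Vert /\left\Vert y^{\prime }\right\Vert )^{\nu _{n}}$, so the series converges uniformly together with all $x^{\prime }$-derivatives on compact subsets of $\{1<\left\Vert x^{\prime }\right\Vert <\left\Vert y^{\prime }\right\Vert \}$. The boundary values read off from $\psi _{\nu _{n}}(1)=0$ for (\ref{R1}) at $\left\Vert x^{\prime }\right\Vert =1$ and from $\psi _{\nu _{n}}(b/b)=0$ for (\ref{S1}) at $\left\Vert x^{\prime }\right\Vert =b$. In the case $N=3$, the modes $n\geq 1$ are handled identically with $T_{n}$ in place of $P_{n}^{((N-3)/2)}$, and the $n=0$ piece is a multiple of $\log \left\Vert x^{\prime }\right\Vert $ (respectively $\log (b/\left\Vert x^{\prime }\right\Vert )$), harmonic on $\mathbb{R}^{2}\setminus \{0^{\prime }\}$ and vanishing on the prescribed boundary component.

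The main obstacle is matching the singularity at $x^{\prime }=y^{\prime }$. Writing $r_{<}=\min (\left\Vert x^{\prime }\right\Vert ,\left\Vert y^{\prime }\right\Vert )$, $r_{>}=\max (\left\Vert x^{\prime }\right\Vert ,\left\Vert y^{\prime }\right\Vert )$, $t=\left\langle x^{\prime },y^{\prime }\right\rangle /(\left\Vert x^{\prime }\right\Vert \left\Vert y^{\prime }\right\Vert )$ and $u=r_{<}/r_{>}$, the key ingredient is the classical expansion
\begin{equation*}
\frac{1}{\left\Vert x^{\prime }-y^{\prime }\right\Vert ^{N-3}}=(\left\Vert x^{\prime }\right\Vert \left\Vert y^{\prime }\right\Vert )^{(3-N)/2}\sum_{n=0}^{\infty }P_{n}^{((N-3)/2)}(t)\left( \frac{r_{<}}{r_{>}}\right) ^{\nu _{n}},
\end{equation*}
obtained by factoring $\left\Vert x^{\prime }-y^{\prime }\right\Vert ^{2}=r_{>}^{2}(1-2tu+u^{2})$ and invoking the generating definition of $P_{n}^{(\lambda )}$; a parallel $T_{n}$-expansion yields $-\log \left\Vert x^{\prime }-y^{\prime }\right\Vert $ in the $N=3$ case. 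Expanding $\psi _{\nu _{n}}(\left\Vert x^{\prime }\right\Vert )\psi _{\nu _{n}}(b/\left\Vert y^{\prime }\right\Vert )/\psi _{\nu _{n}}(b)$ into its four power-products, the leading contribution reproduces $(r_{<}/r_{>})^{\nu _{n}}$ up to a geometric correction in $b^{-2\nu _{n}}$; the other three terms each carry a negative power of $b$ and, once multiplied by $(\left\Vert x^{\prime }\right\Vert \left\Vert y^{\prime }\right\Vert )^{(3-N)/2}P_{n}^{((N-3)/2)}(t)$, become products of a solid zonal harmonic in $x^{\prime }$ with one in $y^{\prime }$, whose summed series is jointly harmonic on $A_{b}^{\prime }\times A_{b}^{\prime }$. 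The routine but laborious task is to verify uniform convergence of these correction series on compacta, which follows from (\ref{pnl}) combined with exponential factors $b^{-\nu _{n}}$ or $(\left\Vert x^{\prime }\right\Vert \left\Vert y^{\prime }\right\Vert /b^{2})^{\nu _{n}}$ whose base is strictly less than one. Thus the right-hand side of (\ref{R1}) minus the appropriate multiple of $\left\Vert x^{\prime }-y^{\prime }\right\Vert ^{-(N-3)}$ is harmonic on $A_{b}^{\prime }$ in $x^{\prime }$ and vanishes on $\partial A_{b}^{\prime }$, hence coincides with $G_{A_{b}^{\prime }}(\cdot ,y^{\prime })$ minus that multiple of the fundamental solution, yielding (\ref{R1}); the companions (\ref{S1}), (\ref{R2}), (\ref{S2}) follow by symmetry and the $N=3$ analogues.
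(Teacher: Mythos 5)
Your argument is essentially correct, but it takes a genuinely different route from the paper: the paper disposes of Proposition \ref{B} in two lines by citing Corollary 1.1 of Grossi--Vujadinovic \cite{GV} (after a dilation) for $N\geq 4$ and Proposition 2.1 of Grossi--Takahashi \cite{GT} together with the two logarithmic $T_{n}$-expansions for $N=3$, whereas you reconstruct the expansions from scratch by verifying the three characterizing properties of the Green function and invoking uniqueness. Your key computations check out: the summands are indeed combinations of the solid zonal harmonics $\left\Vert x^{\prime }\right\Vert ^{n}$ and $\left\Vert x^{\prime }\right\Vert ^{3-N-n}$ times $P_{n}^{((N-3)/2)}$, the bound (\ref{pnl}) is polynomial in $n$ against geometric decay, and the four power-products of $\psi _{\nu _{n}}(\left\Vert x^{\prime }\right\Vert )\psi _{\nu _{n}}(b/\left\Vert y^{\prime }\right\Vert )/\psi _{\nu _{n}}(b)$ reproduce $(r_{<}/r_{>})^{\nu _{n}}$ plus corrections with bases $\left\Vert x^{\prime }\right\Vert \left\Vert y^{\prime }\right\Vert /b^{2}$, $1/(\left\Vert x^{\prime }\right\Vert \left\Vert y^{\prime }\right\Vert )$, etc., all strictly less than one on $A_{b}^{\prime }\times A_{b}^{\prime }$. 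The one step you should make explicit rather than leave to ``symmetry'': the uniqueness argument requires a single candidate harmonic on all of $A_{b}^{\prime }\setminus \{y^{\prime }\}$, obtained by gluing (\ref{R1}) on $\left\Vert x^{\prime }\right\Vert \leq \left\Vert y^{\prime }\right\Vert $ to (\ref{S1}) on $\left\Vert x^{\prime }\right\Vert \geq \left\Vert y^{\prime }\right\Vert $, and harmonicity across the sphere $\left\Vert x^{\prime }\right\Vert =\left\Vert y^{\prime }\right\Vert $ is not automatic. It follows because the correction series (candidate minus $\left\Vert x^{\prime }-y^{\prime }\right\Vert ^{3-N}$, resp.\ minus $-\log \left\Vert x^{\prime }-y^{\prime }\right\Vert $ plus the elementary $\log $ terms when $N=3$) is given by one and the same formula, symmetric in $\left\Vert x^{\prime }\right\Vert $ and $\left\Vert y^{\prime }\right\Vert $, on both sides; once that is observed, both (\ref{R1}) and (\ref{S1}) drop out of a single application of the maximum principle, and the appeal to the symmetry $G_{A_{b}^{\prime }}(x^{\prime },y^{\prime })=G_{A_{b}^{\prime }}(y^{\prime },x^{\prime })$ becomes unnecessary. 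What your approach buys is a self-contained proof independent of \cite{GV} and \cite{GT}; what the paper's approach buys is brevity.
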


\begin{proof}
(i) This follows by dilation from Corollary 1.1 of Grossi and Vujadinovic 
\cite{GV}.

(ii) This follows by combining Proposition 2.1 of Grossi and Takahashi \cite%
{GT} (cf. Hickey \cite{Hic}) with the expansions%
\begin{equation*}
-\log \left\Vert \frac{x^{\prime }}{\left\Vert y^{\prime }\right\Vert }-%
\frac{y^{\prime }}{\left\Vert y^{\prime }\right\Vert }\right\Vert
=\sum\limits_{n=1}^{\infty }\frac{1}{n}T_{n}\left( \frac{\left\langle
x^{\prime },y^{\prime }\right\rangle }{\left\Vert x^{\prime }\right\Vert
\left\Vert y^{\prime }\right\Vert }\right) \left( \frac{\left\Vert x^{\prime
}\right\Vert }{\left\Vert y^{\prime }\right\Vert }\right) ^{n}\text{ \ \ \ }%
(\left\Vert x^{\prime }\right\Vert <\left\Vert y^{\prime }\right\Vert ),
\end{equation*}%
\begin{equation*}
-\log \left\Vert \frac{x^{\prime }}{\left\Vert x^{\prime }\right\Vert }-%
\frac{y^{\prime }}{\left\Vert x^{\prime }\right\Vert }\right\Vert
=\sum\limits_{n=1}^{\infty }\frac{1}{n}T_{n}\left( \frac{\left\langle
x^{\prime },y^{\prime }\right\rangle }{\left\Vert x^{\prime }\right\Vert
\left\Vert y^{\prime }\right\Vert }\right) \left( \frac{\left\Vert y^{\prime
}\right\Vert }{\left\Vert x^{\prime }\right\Vert }\right) ^{n}\text{ \ \ \ }%
(\left\Vert y^{\prime }\right\Vert <\left\Vert x^{\prime }\right\Vert ).
\end{equation*}
\end{proof}

\bigskip

Let $y^{\prime }\in A_{b}^{\prime }$ and $\delta \in (0,1)$, and let $%
S_{y^{\prime }}$ be the sphere in $\mathbb{R}^{N-1}$\ centred at $0^{\prime
} $ that contains $y^{\prime }$. We define $\mu _{y^{\prime },\delta }$ to
be the probability measure on $S_{y^{\prime }}$ that has density with
respect to surface area measure proportional to%
\begin{equation*}
\exp \left( -\left( 1-\frac{\left\Vert z^{\prime }-y^{\prime }\right\Vert
^{2}}{\delta ^{2}\left\Vert y^{\prime }\right\Vert ^{2}}\right) ^{-1}\right) 
\text{ \ when }\left\Vert z^{\prime }-y^{\prime }\right\Vert <\delta
\left\Vert y^{\prime }\right\Vert \text{, \ \ and }0\text{ otherwise.}
\end{equation*}%
We further define the Green potential 
\begin{equation*}
G_{A_{b}^{\prime }}\mu _{y^{\prime },\delta }(x^{\prime })=\int
G_{A_{b}^{\prime }}(x^{\prime },z^{\prime })~d\mu _{y^{\prime },\delta
}(z^{\prime })\text{ \ \ \ }(x^{\prime }\in A_{b}^{\prime }),
\end{equation*}%
and the function%
\begin{equation*}
P_{n}^{(\frac{N-3}{2})}\mu _{y^{\prime },\delta }(x^{\prime })=\int
P_{n}^{\left( \frac{N-3}{2}\right) }\left( \dfrac{\left\langle x^{\prime
},z^{\prime }\right\rangle }{\left\Vert x^{\prime }\right\Vert \left\Vert
z^{\prime }\right\Vert }\right) ~d\mu _{y^{\prime },\delta }(z^{\prime })%
\text{ \ \ \ }(x^{\prime }\in \mathbb{R}^{N-1}\backslash \{0^{\prime }\})
\end{equation*}%
when $N\geq 4$. When $N=3$\ the function $T_{n}\mu _{y^{\prime },\delta }$
is defined from $T_{n}$ analogously.

We recall the following result (see \cite{Kal}).

\begin{proposition}
\label{F}Let $f\in C^{\infty }(\partial B^{\prime })$ and let $c_{i,j}$ be
the Fourier coefficients of $f$ with respect to an orthonormal basis $%
\{H_{i,j}:j=1,...,M(i)\}$ of the spherical harmonics of degree $i$ in $%
\mathbb{R}^{N-1}$. Then the series $\sum_{i=0}^{\infty
}\sum_{j=1}^{M(i)}c_{i,j}H_{i,j}$ converges uniformly on $\partial B^{\prime
}$ to $f$, and so the series 
\begin{equation*}
\sum_{i=0}^{\infty }\left\Vert x^{\prime }\right\Vert
^{i}\sum_{j=1}^{M(i)}c_{i,j}H_{i,j}\left( \frac{x^{\prime }}{\left\Vert
x^{\prime }\right\Vert }\right)
\end{equation*}%
converges uniformly on $B^{\prime }\backslash \{0^{\prime }\}$ to the
Poisson integral of $f$ in $B^{\prime }$.
\end{proposition}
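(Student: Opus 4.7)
My plan is to prove this via the classical smoothness-via-eigenvalues argument for spherical harmonic expansions. The key observation is that each $H_{i,j}$ is an eigenfunction of the Laplace--Beltrami operator $\Delta_S$ on $\partial B'$ with eigenvalue $-i(i+N-3)$, so repeated application of this self-adjoint operator converts smoothness of $f$ into rapid decay of its Fourier coefficients.

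First, since $f\in C^\infty(\partial B')$, each iterated Laplacian $\Delta_S^k f$ lies in $L^2(\partial B')$ and, by self-adjointness, its Fourier coefficient against $H_{i,j}$ is exactly $(-i(i+N-3))^k c_{i,j}$. Parseval's identity applied to $\Delta_S^k f$ therefore yields
\[
\sum_{i=0}^{\infty}\sum_{j=1}^{M(i)} i^{4k}\,|c_{i,j}|^{2}\;\le\;C_{k}<\infty
\]
for every integer $k\ge0$, so $\sum_{j}|c_{i,j}|^{2}$ decays faster than any polynomial in $i$. I then combine this with the addition formula $\sum_{j=1}^{M(i)}|H_{i,j}(\xi)|^{2}=M(i)/\sigma_{N-2}$ (a constant independent of $\xi\in\partial B'$), where $\sigma_{N-2}$ is the surface area and $M(i)=O(i^{N-3})$ grows polynomially. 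By Cauchy--Schwarz,
\[
\Bigl|\sum_{j=1}^{M(i)} c_{i,j} H_{i,j}(\xi)\Bigr|\;\le\;\Bigl(\sum_{j}|c_{i,j}|^{2}\Bigr)^{1/2}\Bigl(M(i)/\sigma_{N-2}\Bigr)^{1/2},
\]
and taking $k$ large in the previous bound makes the right hand side summable in $i$. Hence $\sum_{i,j}c_{i,j}H_{i,j}$ converges uniformly on $\partial B'$. The uniform limit agrees with $f$ in the $L^{2}$ sense (the $c_{i,j}$ are by definition the $L^{2}$ Fourier coefficients), and uniform convergence together with continuity of $f$ forces equality pointwise.

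For the Poisson-integral statement, note that each term $\|x'\|^{i}H_{i,j}(x'/\|x'\|)$ is the harmonic extension to $\overline{B'}$ of $H_{i,j}|_{\partial B'}$. The partial sums $S_{n}(x')=\sum_{i\le n}\sum_{j}c_{i,j}\|x'\|^{i}H_{i,j}(x'/\|x'\|)$ are harmonic polynomials, so by the maximum principle $\sup_{\overline{B'}}|S_{n}-S_{m}|=\sup_{\partial B'}|S_{n}-S_{m}|$, and the uniform Cauchy property already established on $\partial B'$ transfers to $\overline{B'}$. The uniform limit is therefore harmonic on $B'$, continuous on $\overline{B'}$ and equal to $f$ on $\partial B'$; by uniqueness of the Dirichlet problem in $B'$ it coincides with the Poisson integral of $f$. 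In particular convergence is uniform on $B'\setminus\{0'\}$, as claimed.

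The argument is essentially routine once the two ingredients are in hand; the only point requiring care is the justification that $\Delta_{S}^{k}f$ has Fourier coefficients $(-i(i+N-3))^{k}c_{i,j}$, which follows from the self-adjointness of $\Delta_{S}$ and the eigenvalue identity $\Delta_{S}H_{i,j}=-i(i+N-3)H_{i,j}$, together with the correct polynomial dimension estimate $M(i)=O(i^{N-3})$ (which reduces to $M(i)\le 2$ when $N=3$, so the argument is uniform across dimensions).
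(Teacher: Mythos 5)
Your proof is correct. Note that the paper itself does not prove this proposition: it is stated as a recalled result with a citation to Kalf's survey \cite{Kal}, so there is no in-paper argument to compare against. Your route --- using self-adjointness of the Laplace--Beltrami operator and the eigenvalue identity $\Delta_S H_{i,j}=-i(i+N-3)H_{i,j}$ to get super-polynomial decay of $\bigl(\sum_j|c_{i,j}|^2\bigr)^{1/2}$, then the addition formula and the bound $M(i)=O(i^{N-3})$ to get uniform convergence on $\partial B'$, and finally the maximum principle plus uniqueness for the Dirichlet problem to identify the solid-harmonic series with the Poisson integral --- is the standard one and all steps check out.
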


\begin{remark}
\label{BR}By Proposition \ref{B} we obtain formulae for $G_{A_{b}^{\prime
}}\mu _{y^{\prime },\delta }(x^{\prime })$ if we replace $P_{n}^{\left( 
\frac{N-3}{2}\right) }\left( \frac{\left\langle x^{\prime },y^{\prime
}\right\rangle }{\left\Vert x^{\prime }\right\Vert \left\Vert y^{\prime
}\right\Vert }\right) $ by $P_{n}^{(\frac{N-3}{2})}\mu _{y^{\prime },\delta
}(x^{\prime })$ in (\ref{R1}) and (\ref{S1}), and $T_{n}\left( \frac{%
\left\langle x^{\prime },y^{\prime }\right\rangle }{\left\Vert x^{\prime
}\right\Vert \left\Vert y^{\prime }\right\Vert }\right) $ by $T_{n}\mu
_{y^{\prime },\delta }(x^{\prime })$ in (\ref{R2}) and (\ref{S2}). Further,
the series in (\ref{R1}) and (\ref{R2}) would now converge uniformly on $%
\{x^{\prime }:1<\left\Vert x^{\prime }\right\Vert \leq \left\Vert y^{\prime
}\right\Vert \}$. When $N\geq 4$ this follows from the proof of Corollary
1.1 in \cite{GV} with the additional ingredient that the restriction of the
Newtonian potential%
\begin{equation*}
x^{\prime }\longmapsto \int \left\Vert x^{\prime }-z^{\prime }\right\Vert
^{3-N}d\mu _{y^{\prime },\delta }(z^{\prime })\text{ \ \ \ }(x^{\prime }\in 
\mathbb{R}^{N-1})
\end{equation*}%
to $S_{y^{\prime }}$ is $C^{\infty }$ (cf. Theorem 3.3.3 of \cite{AG}), and
so we can appeal to the preceding proposition. The case where $N=3$ follows
similarly from \cite{GT}. Further, inversion can be used to show that the
series in (\ref{S1}) and (\ref{S2}) would converge uniformly on $\{x^{\prime
}:\left\Vert y^{\prime }\right\Vert \leq \left\Vert x^{\prime }\right\Vert
<b\}$.
\end{remark}

\section{Proofs of main results\label{endmain}}

Let $\eta _{t}$ denote the unit measure concentrated at $t\in \mathbb{R}$.

\begin{lemma}
\label{hc}For any $n\geq 0,m\geq 1$ and $y\in \Omega _{b}$, let $u_{n,m,y}$
be the function on $(\mathbb{R}^{N-1}\backslash \{0^{\prime }\})\times 
\mathbb{R}$\ defined by%
\begin{equation*}
x\mapsto \left\Vert x^{\prime }\right\Vert ^{\frac{3-N}{2}}P_{n}^{\left( 
\frac{N-3}{2}\right) }\left( \dfrac{\left\langle x^{\prime },y^{\prime
}\right\rangle }{\left\Vert x^{\prime }\right\Vert \left\Vert y^{\prime
}\right\Vert }\right) \frac{U_{\nu _{n}}(\rho _{\nu _{n},m},\left\Vert
x^{\prime }\right\Vert )U_{\nu _{n}}(\rho _{\nu _{n},m},\left\Vert y^{\prime
}\right\Vert )}{\rho _{\nu _{n},m}\int_{1}^{b}U_{\nu _{n}}^{2}(\rho _{\nu
_{n},m},t)~t~dt}e^{-\rho _{\nu _{n},m}\left\vert x_{N}-y_{N}\right\vert }%
\text{ \ \ \ }(N\geq 4),
\end{equation*}%
\begin{equation*}
x\mapsto T_{n}\left( \frac{\left\langle x^{\prime },y^{\prime }\right\rangle 
}{\left\Vert x^{\prime }\right\Vert \left\Vert y^{\prime }\right\Vert }%
\right) \frac{U_{\nu _{n}}(\rho _{\nu _{n},m},\left\Vert x^{\prime
}\right\Vert )U_{\nu _{n}}(\rho _{\nu _{n},m},\left\Vert y^{\prime
}\right\Vert )}{\rho _{\nu _{n},m}\int_{1}^{b}U_{\nu _{n}}^{2}(\rho _{\nu
_{n},m},t)~t~dt}e^{-\rho _{\nu _{n},m}\left\vert x_{N}-y_{N}\right\vert }%
\text{ \ \ \ }(N=3).
\end{equation*}%
Then $u_{n,m,y}$\newline
(i) is harmonic on $\left( \mathbb{R}^{N-1}\backslash \{0^{\prime }\}\right)
\times \left( \mathbb{R}\backslash \{y_{N}\}\right) $;\newline
(ii) continuously vanishes on $\partial A_{b}^{\prime }\times \mathbb{R}$;%
\newline
(iii) has distributional Laplacian on $\left( \mathbb{R}^{N-1}\backslash
\{0^{\prime }\}\right) \times \mathbb{R}$ given by 
\begin{equation*}
-2\rho _{\nu _{n},m}u_{n,m,y}(z^{\prime },y_{N})dz^{\prime }d\eta _{y_{N}}.
\end{equation*}
\end{lemma}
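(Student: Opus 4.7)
The plan is to treat $u_{n,m,y}$ as a function obtained by separation of variables in cylindrical--polar coordinates $(r,\xi,x_N)$ on $\mathbb{R}^{N-1}\times\mathbb{R}$, where $r=\left\Vert x^{\prime}\right\Vert$ and $\xi=x^{\prime}/r$. The three parts of the statement then correspond to: verifying the separated ODE system, reading off boundary vanishing from the defining property of $\rho_{\nu_n,m}$, and identifying the single distribution produced by the exponential factor in the $x_N$ direction.

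For (i), I would write $u_{n,m,y}(x) = F_n(x')\, e^{-\rho_{\nu_n,m}|x_N-y_N|}$, where $F_n(x')$ is the product of the $P_n^{((N-3)/2)}$-- (respectively $T_n$--)\,factor, the radial factor $\|x'\|^{(3-N)/2}U_{\nu_n}(\rho_{\nu_n,m},\|x'\|)$ (respectively $U_{\nu_n}(\rho_{\nu_n,m},\|x'\|)$ when $N=3$), and the constants depending on $y$. For fixed unit vector $y'/\|y'\|$, the factor $\xi\mapsto P_n^{((N-3)/2)}(\xi\cdot y'/\|y'\|)$ is a zonal spherical harmonic of degree $n$ on $S^{N-2}$ (likewise $T_n$ when $N=3$), with eigenvalue $-n(n+N-3)$ under $\Delta_{S^{N-2}}$. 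In the $(N-1)$-dimensional Laplacian
\[
\Delta_{N-1}=\partial_r^{2}+\tfrac{N-2}{r}\partial_r+\tfrac{1}{r^{2}}\Delta_{S^{N-2}},
\]
the substitution $R(r)=r^{(3-N)/2}u(r)$ with $u(r)=U_{\nu_n}(\rho_{\nu_n,m},r)$ converts the separated radial equation into Bessel's equation of order $\nu_n=n+(N-3)/2$ in the variable $\rho_{\nu_n,m}r$, which is satisfied by Lemma \ref{JV}(vi) (using $\nu_n^{2}-(N-3)^{2}/4=n(n+N-3)$). Hence $\Delta_{N-1}F_n=-\rho_{\nu_n,m}^{2}F_n$ in the classical sense on $\mathbb{R}^{N-1}\setminus\{0'\}$, and since $\partial_{x_N}^{2}e^{-\rho|x_N-y_N|}=\rho^{2}e^{-\rho|x_N-y_N|}$ for $x_N\neq y_N$, harmonicity on $(\mathbb{R}^{N-1}\setminus\{0'\})\times(\mathbb{R}\setminus\{y_N\})$ follows.

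Part (ii) is immediate: by the definition of $\rho_{\nu_n,m}$ we have $U_{\nu_n}(\rho_{\nu_n,m},1)=U_{\nu_n}(\rho_{\nu_n,m},b)=0$, so $F_n$ vanishes whenever $\|x'\|\in\{1,b\}$. All other factors are bounded and continuous on the closure, so $u_{n,m,y}$ is continuous on $\overline{A_b^{\prime}}\times\mathbb{R}$ and vanishes on $\partial A_b^{\prime}\times\mathbb{R}$.

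For (iii), the only obstruction to classical harmonicity on $(\mathbb{R}^{N-1}\setminus\{0'\})\times\mathbb{R}$ is the kink of $e^{-\rho|x_N-y_N|}$ at $x_N=y_N$. In one variable the distributional identity
\[
\partial_t^{2}e^{-\rho|t|}=\rho^{2}e^{-\rho|t|}-2\rho\,\delta_{0}
\]
combined with $\Delta_{N-1}F_n=-\rho_{\nu_n,m}^{2}F_n$ gives
\[
\Delta u_{n,m,y}=-2\rho_{\nu_n,m}\,F_n(x')\,\delta_{y_N}(x_N),
\]
and since $u_{n,m,y}(z',y_N)=F_n(z')$, this is exactly the stated measure $-2\rho_{\nu_n,m}u_{n,m,y}(z',y_N)\,dz'\,d\eta_{y_N}$. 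The only step requiring care is justifying the product rule for the distributional $x_N$-derivatives against the smooth $x'$-dependence on $\mathbb{R}^{N-1}\setminus\{0'\}$, which is routine because $F_n$ is smooth away from the axis and the distribution is supported on the smooth hyperplane $\{x_N=y_N\}$; no step in this lemma depends on the deeper estimates of Section \ref{Prep}, so I do not anticipate a substantial obstacle.
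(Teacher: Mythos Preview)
Your proposal is correct and follows essentially the same approach as the paper, which simply cites the identical separation-of-variables argument from Lemma~11 of \cite{GR1} for parts (i) and (iii) and invokes (\ref{RE}) for part (ii). Your write-up is in fact a fully spelled-out version of that referenced argument: the zonal spherical harmonic identification, the reduction of the radial equation to Bessel's equation of order $\nu_n$, and the distributional identity $\partial_t^{2}e^{-\rho|t|}=\rho^{2}e^{-\rho|t|}-2\rho\delta_0$ are exactly the ingredients used there.
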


\begin{proof}
Parts (i) and (iii) are proved by the same arguments as were used to
establish parts (i) and (iv) of Lemma 11 in \cite{GR1}. Part (ii) follows
from (\ref{RE}).
\end{proof}

\bigskip

The binomial coefficient $\binom{n+N-4}{n}$, which appears in several
estimates below, should be interpreted as $1$ when $N=3$. We denote the
distance from $y^{\prime }$ to $\mathbb{R}^{N-1}\backslash A_{b}^{\prime }$
by%
\begin{equation*}
d(y^{\prime })=\min \{\left\Vert y^{\prime }\right\Vert -1,b-\left\Vert
y^{\prime }\right\Vert \}\text{ \ \ \ }(y^{\prime }\in \overline{%
A_{b}^{\prime }}).
\end{equation*}%
Also, we will write $C(\alpha ,\beta ,...)$ for a positive constant
depending at most on $\alpha ,\beta ,...$, not necessarily the same on any
two occurrences.

\begin{lemma}
\label{est}Let $n\geq 0,m\geq 1,y\in \Omega _{b}$, and let $u_{n,m,y}$ be as
in Lemma \ref{hc}. Then\newline
(i) $\left\vert u_{n,m,y}(x)\right\vert \leq C(b)\dbinom{n+N-4}{n}\rho _{\nu
_{n},m}^{3}d(y^{\prime })e^{-\rho _{\nu _{n},m}\left\vert
x_{N}-y_{N}\right\vert }$ \ $(1\leq \left\Vert x^{\prime }\right\Vert \leq
b) $;\newline
(ii) $\left\vert u_{n,m,y}(x)\right\vert \leq C(b)\dbinom{n+N-4}{n}\dfrac{%
\rho _{\nu _{n},m}^{2}}{m}d(y^{\prime })e^{-\rho _{\nu _{n},m}\left\vert
x_{N}-y_{N}\right\vert }$ \ $(b<\left\Vert x^{\prime }\right\Vert )$;\newline
(iii) $\left\vert u_{n,m,y}(x)\right\vert \leq C(b)\dbinom{n+N-4}{n}\dfrac{%
\rho _{\nu _{n},m}^{3}}{\left\Vert x^{\prime }\right\Vert ^{\nu _{n}+(N-1)/2}%
}d(y^{\prime })e^{-\rho _{\nu _{n},m}\left\vert x_{N}-y_{N}\right\vert }$ \ $%
(0<\left\Vert x^{\prime }\right\Vert <1)$.
\end{lemma}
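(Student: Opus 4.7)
The plan is to bound $|u_{n,m,y}(x)|$ as a product of factors and treat the three regimes by changing only the estimate for the $x'$-dependent factor $\|x'\|^{(3-N)/2}|U_{\nu_n}(\rho_{\nu_n,m},\|x'\|)|$. Three factors admit bounds uniform in $\|x'\|$. The ultraspherical polynomial is at most $\binom{n+N-4}{n}$ on $[-1,1]$ by (\ref{pnl}) (or $|T_n|\le1$ when $N=3$). Since $U_{\nu_n}(\rho_{\nu_n,m},\cdot)$ vanishes at both $1$ and $b$, the mean value theorem together with the derivative bound (\ref{CSb}) from Proposition \ref{CS} (applied with $a=1$) yields
\[
|U_{\nu_n}(\rho_{\nu_n,m},\|y'\|)|\le d(y')\sup_{[1,b]}|\partial_t U_{\nu_n}(\rho_{\nu_n,m},\cdot)|\le C(b)\,\rho_{\nu_n,m}\,d(y').
\]
Proposition \ref{Unu}(iv) (with (\ref{zero}) handling $\nu_n=0$) bounds the reciprocal denominator $(\rho_{\nu_n,m}\int_1^b U_{\nu_n}^2\,t\,dt)^{-1}$ by $C(b)\,\rho_{\nu_n,m}$. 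Multiplying these three uniform bounds produces the common prefactor $C(b)\binom{n+N-4}{n}\rho_{\nu_n,m}^{2}\,d(y')\,e^{-\rho_{\nu_n,m}|x_N-y_N|}$.

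Three regime-dependent estimates for $\|x'\|^{(3-N)/2}|U_{\nu_n}(\rho,\|x'\|)|$ then finish the proof. For (i), $\|x'\|\in[1,b]$ gives $\|x'\|^{(3-N)/2}\le 1$, and (\ref{CSa}) from Proposition \ref{CS} bounds $|U_{\nu_n}(\rho,\|x'\|)|$ by $C(b)\rho$, supplying the missing $\rho$. For (ii), $\|x'\|>b$: since $t\mapsto U_{\nu_n}(\rho,t)$ is the cylinder function $\alpha J_{\nu_n}+\beta Y_{\nu_n}$ with $\alpha=Y_{\nu_n}(\rho b)$, $\beta=-J_{\nu_n}(\rho b)$, we have $\alpha^{2}+\beta^{2}=N_{\nu_n}(\rho b)$, and the Cauchy--Schwarz estimate (as at the start of the proof of Proposition \ref{Unu}(ii)) gives $|U_{\nu_n}(\rho,t)|^{2}\le N_{\nu_n}(\rho b)\,N_{\nu_n}(\rho t)$; by the monotonicity of $N_{\nu_n}$ (Lemma \ref{JV}(v)) this is at most $N_{\nu_n}(\rho b)^{2}$ when $\|x'\|>b$, and the universal estimate $\rho b\ge\nu_n+m/4$ from (\ref{lower}) combined with Lemma \ref{JV}(iv) forces $N_{\nu_n}(\rho b)\le 8/(\pi m)$, supplying the factor $1/m$. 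For (iii), $0<\|x'\|<1$: Proposition \ref{Unu}(v), applied with $a\le 1$ the least positive zero of $U_{\nu_n}(\rho,\cdot)$ (and $\nu_n>0$), gives $|U_{\nu_n}(\rho,\|x'\|)|\le\|x'\|^{-\nu_n}/(\pi\nu_n)$ for $\|x'\|<a$; since $\|x'\|^{(3-N)/2-\nu_n}=\|x'\|\cdot\|x'\|^{-\nu_n-(N-1)/2}\le C(b)\rho\,\|x'\|^{-\nu_n-(N-1)/2}$ using $\|x'\|<1$ and the uniform lower bound $\rho\ge c(b)>0$ from (\ref{lower}), the required estimate follows.

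The principal obstacle is completing case (iii), where two subtleties arise. First, if the least positive zero $a$ is strictly less than $1$, Proposition \ref{Unu}(v) does not cover the range $[a,1)$; this range can be controlled by a Cauchy--Schwarz argument in the spirit of Proposition \ref{CS}, applied to the subinterval $[a,1]$ (with $U_{\nu_n}(\rho,\cdot)$ vanishing at both endpoints), yielding a bound with the same scaling in $\rho$ and $\|x'\|$. Second, when $\nu_n=0$ (only $N=3$ and $n=0$, so the polynomial factor is absent), Proposition \ref{Unu}(v) does not apply, but the required $|U_0(\rho,\|x'\|)|\le C(b)\rho/\|x'\|$ can still be obtained from the logarithmic asymptotic $Y_0(z)=O(|\log z|)$ near $0$, the estimate $\sqrt{N_0(\rho b)}\le C(b)/\sqrt{\rho}$ (from Lemma \ref{JV}(iv)), and the elementary inequality $|\log t|\le 1/t$ on $(0,1]$.
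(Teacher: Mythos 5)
Your proof is correct and follows essentially the same route as the paper: the same three uniform factors (the bound (\ref{pnl}) on the polynomial, the mean value theorem with (\ref{CSb}) for the $y'$-factor, and Proposition \ref{Unu}(iv) for the normalising integral), with Proposition \ref{CS} handling region (i) and the bound $|U_{\nu_n}|\leq N_{\nu_n}(\rho_{\nu_n,m}b)\leq C/m$ handling region (ii) (the paper gets the first inequality by the arithmetic--geometric means inequality rather than Cauchy--Schwarz, which is immaterial; note only that for $\nu_n=0$ you must invoke (\ref{mcm}) rather than (\ref{lower}) to get $\rho_{0,m}\gtrsim m$). The one place where you overcomplicate is region (iii): your ``first subtlety'' is not actually an obstacle, because Proposition \ref{CS} is stated for an arbitrary zero $a$ of $U_{\nu}(\rho_{\nu,m},\cdot)$ as the left endpoint, so taking $a$ to be the least positive zero it already gives $|U_{\nu_n}(\rho_{\nu_n,m},s)|\leq \rho_{\nu_n,m}b/\pi$ on all of $[a,b]\supseteq[a,1]$ with no new Cauchy--Schwarz argument needed; and for $\nu_n=0$ the paper avoids the logarithmic asymptotics of $Y_0$ by simply writing $|U_0(\rho_{0,m},t)|\leq N_0(\rho_{0,m}t)\leq 2/(\pi\rho_{0,m}t)$, using Lemma \ref{JV}(iv). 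Both of your proposed workarounds would nevertheless succeed, so there is no gap.
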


\begin{proof}
Since either $\nu _{n}\geq 
%TCIMACRO{\U{bd}}%
%BeginExpansion
{\frac12}%
%EndExpansion
$ or $\nu _{n}=0$, we see from Proposition \ref{Unu}(iv) that 
\begin{equation*}
\frac{1}{\rho _{\nu _{n},m}\int_{1}^{b}\left\{ U_{\nu _{n}}(\rho _{\nu
_{n},m},t)\right\} ^{2}t~dt}\leq C(b)\rho _{\nu _{n},m}.
\end{equation*}%
Further, by (\ref{CSb}), (\ref{RE}) and the mean value theorem,%
\begin{equation*}
\left\vert U_{\nu _{n}}(\rho _{\nu _{n},m},\left\Vert y^{\prime }\right\Vert
)\right\vert \leq \frac{\rho _{\nu _{n},m}b}{\pi }d(y^{\prime }).
\end{equation*}%
In view of (\ref{pnl}) it only remains to establish appropriate estimates
for $U_{\nu _{n}}(\rho _{\nu _{n},m},\left\Vert x^{\prime }\right\Vert )$ in
each of the three stated regions.

Proposition \ref{CS} (with $a=1$) shows that%
\begin{equation*}
\left\vert U_{\nu _{n}}(\rho _{\nu _{n},m},\left\Vert x^{\prime }\right\Vert
)\right\vert \leq C(b)\rho _{\nu _{n},m}\text{ \ \ \ }(1\leq \left\Vert
x^{\prime }\right\Vert \leq b),
\end{equation*}%
so part (i) is established.

When $\left\Vert x^{\prime }\right\Vert >b$, we use the arithmetic-geometric
means inequality and then Lemma \ref{JV}(v) to see that 
\begin{equation*}
\left\vert U_{\nu _{n}}(\rho _{\nu _{n},m},\left\Vert x^{\prime }\right\Vert
)\right\vert \leq \frac{N_{\nu _{n}}(\rho _{\nu _{n},m}\left\Vert x^{\prime
}\right\Vert )+N_{\nu _{n}}(\rho _{\nu _{n},m}b)}{2}\leq N_{\nu _{n}}(\rho
_{\nu _{n},m}b),
\end{equation*}%
and hence that%
\begin{equation*}
\left\vert U_{\nu _{n}}(\rho _{\nu _{n},m},\left\Vert x^{\prime }\right\Vert
)\right\vert \leq \frac{C}{m},
\end{equation*}%
by Proposition \ref{Unu}(i) and Lemma \ref{JV}(iv). Part (ii) now follows.

To prove part (iii), let $a$ denote the least positive zero of $U_{\nu
_{n}}(\rho _{\nu _{n},m},\cdot )$. Thus $a\in (0,1]$. If $\nu _{n}\geq 
%TCIMACRO{\U{bd}}%
%BeginExpansion
{\frac12}%
%EndExpansion
$, then we see from Proposition \ref{CS} that 
\begin{equation*}
\left\vert U_{\nu _{n}}(\rho _{\nu _{n},m},\left\Vert x^{\prime }\right\Vert
)\right\vert \leq \frac{\rho _{\nu _{n},m}}{\pi }b\text{ \ \ \ }(a\leq
\left\Vert x^{\prime }\right\Vert \leq 1),
\end{equation*}%
and from Proposition \ref{Unu}(v) that%
\begin{equation*}
\left\vert U_{\nu _{n}}(\rho _{\nu _{n},m},\left\Vert x^{\prime }\right\Vert
)\right\vert \leq \frac{2\left\Vert x^{\prime }\right\Vert ^{-\nu _{n}}}{\pi 
}\text{ \ \ \ }(0<\left\Vert x^{\prime }\right\Vert <a),
\end{equation*}%
whence%
\begin{equation*}
\left\vert U_{\nu _{n}}(\rho _{\nu _{n},m},\left\Vert x^{\prime }\right\Vert
)\right\vert \leq C(b)\rho _{\nu _{n},m}\left\Vert x^{\prime }\right\Vert
^{-\nu _{n}}\text{ \ \ \ }(0<\left\Vert x^{\prime }\right\Vert <1).
\end{equation*}%
If $\nu _{n}=0$, we instead observe that 
\begin{eqnarray*}
\left\vert U_{0}(\rho _{0,m},\left\Vert x^{\prime }\right\Vert )\right\vert
&\leq &\frac{N_{0}(\rho _{0,m}\left\Vert x^{\prime }\right\Vert )+N_{0}(\rho
_{0,m}b)}{2}\leq N_{0}(\rho _{0,m}\left\Vert x^{\prime }\right\Vert ) \\
&\leq &\frac{2}{\pi \rho _{0,m}\left\Vert x^{\prime }\right\Vert }\text{ \ \
\ }(0<\left\Vert x^{\prime }\right\Vert <1),
\end{eqnarray*}%
by Lemma \ref{JV}(iv).
\end{proof}

\bigskip

Let $n\geq 0$, $\delta \in (0,1)$ and $y^{\prime }\in A_{b}^{\prime }$. We
define%
\begin{equation*}
h_{n,y^{\prime }}^{\delta }(x^{\prime })=\left\{ 
\begin{array}{cc}
\dfrac{\left\Vert x^{\prime }\right\Vert ^{\frac{3-N}{2}}}{\nu _{n}}%
P_{n}^{\left( \frac{N-3}{2}\right) }\mu _{y^{\prime },\delta }\left(
x^{\prime }\right) \dfrac{\psi _{\nu _{n}}(\left\Vert x^{\prime }\right\Vert
)\psi _{\nu _{n}}\left( b/\left\Vert y^{\prime }\right\Vert \right) }{\psi
_{\nu _{n}}(b)} & (1<\left\Vert x^{\prime }\right\Vert \leq \left\Vert
y^{\prime }\right\Vert ) \\ 
\text{ } & \text{ } \\ 
\dfrac{\left\Vert x^{\prime }\right\Vert ^{\frac{3-N}{2}}}{\nu _{n}}%
P_{n}^{\left( \frac{N-3}{2}\right) }\mu _{y^{\prime },\delta }\left(
x^{\prime }\right) \dfrac{\psi _{\nu _{n}}(\left\Vert y^{\prime }\right\Vert
)\psi _{\nu _{n}}\left( b/\left\Vert x^{\prime }\right\Vert \right) }{\psi
_{\nu _{n}}(b)} & (\left\Vert y^{\prime }\right\Vert <\left\Vert x^{\prime
}\right\Vert <b)%
\end{array}%
\right.
\end{equation*}%
if $N\geq 4$, 
\begin{equation*}
h_{n,y^{\prime }}^{\delta }(x^{\prime })=\left\{ 
\begin{array}{cc}
\dfrac{1}{n}T_{n}\mu _{y^{\prime },\delta }\left( x^{\prime }\right) \dfrac{%
\psi _{n}(\left\Vert x^{\prime }\right\Vert )\psi _{n}\left( b/\left\Vert
y^{\prime }\right\Vert \right) }{\psi _{n}(b)} & (1<\left\Vert x^{\prime
}\right\Vert \leq \left\Vert y^{\prime }\right\Vert ) \\ 
\text{ } & \text{ } \\ 
\dfrac{1}{n}T_{n\mu _{y^{\prime },\delta }}\left( x^{\prime }\right) \dfrac{%
\psi _{n}(\left\Vert y^{\prime }\right\Vert )\psi _{n}\left( b/\left\Vert
x^{\prime }\right\Vert \right) }{\psi _{n}(b)} & (\left\Vert y^{\prime
}\right\Vert <\left\Vert x^{\prime }\right\Vert <b)%
\end{array}%
\right.
\end{equation*}%
if $N=3$ and $n\geq 1$, and when $N=3$ and $n=0$ we write%
\begin{equation*}
h_{0,y^{\prime }}^{\delta }(x^{\prime })=\left\{ 
\begin{array}{cc}
2\dfrac{\log \left( b/\left\Vert y^{\prime }\right\Vert \right) }{\log b}%
\log \left\Vert x^{\prime }\right\Vert & (1<\left\Vert x^{\prime
}\right\Vert \leq \left\Vert y^{\prime }\right\Vert ) \\ 
\text{ } & \text{ } \\ 
2\dfrac{\log \left( b/\left\Vert x^{\prime }\right\Vert \right) }{\log b}%
\log \left\Vert y^{\prime }\right\Vert & (\left\Vert y^{\prime }\right\Vert
<\left\Vert x^{\prime }\right\Vert <b)%
\end{array}%
\right. .
\end{equation*}%
Further, let $u_{n,m,y}^{\delta }$ have the same definition as $u_{n,m,y}$,
except that we use $P_{n}^{(\frac{N-3}{2})}\mu _{y^{\prime },\delta
}(x^{\prime })$ and $T_{n}\mu _{y^{\prime },\delta }(x^{\prime })$ in place
of $P_{n}^{\left( \frac{N-3}{2}\right) }\left( \frac{\left\langle x^{\prime
},y^{\prime }\right\rangle }{\left\Vert x^{\prime }\right\Vert \left\Vert
y^{\prime }\right\Vert }\right) $ and $T_{n}\left( \frac{\left\langle
x^{\prime },y^{\prime }\right\rangle }{\left\Vert x^{\prime }\right\Vert
\left\Vert y^{\prime }\right\Vert }\right) $, respectively.

\begin{remark}
\label{del}Lemmas \ref{hc} and \ref{est}\ clearly remain true if we replace $%
u_{n,m,y}$ by $u_{n,m,y}^{\delta }$ throughout.
\end{remark}

We define 
\begin{equation*}
a_{N}=\sigma _{N}(N-2)\text{ \ when \ }N\geq 3,\text{ \ and }a_{2}=\sigma
_{2},
\end{equation*}%
where $\sigma _{N}$ denotes the surface area of the unit sphere in $\mathbb{R%
}^{N}$.

\begin{lemma}
\label{La}Let $y\in \Omega _{b}$ and $n\geq 0$.\newline
(i) The series $\sum_{m=1}^{\infty }\rho _{\nu _{n},m}^{-2}u_{n,m,y}^{\delta
}$ converges uniformly on $\overline{\Omega _{b}}$ to a function $%
v_{n,y}^{\delta }$ which is harmonic on $A_{b}^{\prime }\times (\mathbb{R}%
\backslash \{y_{N}\})$ and continuously vanishes on $\partial \Omega _{b}$.%
\newline
(ii) $(-\Delta v_{n,y}^{\delta })(z)=h_{n,y^{\prime }}^{\delta }(z^{\prime
})dz^{\prime }d\eta _{y_{N}}$ on $\Omega _{b}$, in the sense of
distributions.
\end{lemma}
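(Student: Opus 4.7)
The harmonicity of each $u_{n,m,y}^\delta$ on $(\mathbb{R}^{N-1}\setminus\{0'\})\times(\mathbb{R}\setminus\{y_N\})$ and its vanishing on $\partial A_b'\times\mathbb{R}$ are already supplied by Lemma \ref{hc} together with Remark \ref{del}, so part (i) reduces to the uniform convergence claim. The first step is to combine Lemma \ref{est}(i) with the linear lower bound $\rho_{\nu_n,m}\geq c(\nu_n,b)\,m$ from Proposition \ref{Unu}(i) and Theorem \ref{app} to obtain
\[
|\rho_{\nu_n,m}^{-2}\,u_{n,m,y}^\delta(x)|\;\leq\; C(b,n)\,d(y')\,\rho_{\nu_n,m}\,e^{-\rho_{\nu_n,m}|x_N-y_N|}
\]
on $\overline{\Omega_b}$. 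On any slab $\overline{\Omega_b}\cap\{|x_N-y_N|\geq\epsilon\}$ this bound is summable uniformly in $x$, so the series converges absolutely and uniformly, its limit is harmonic there by Harnack's convergence theorem, and Lemma \ref{hc}(ii) forces the limit to vanish on $\partial A_b'\times(\mathbb{R}\setminus\{y_N\})$.

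The main obstacle is extending uniform convergence across the slice $\{x_N=y_N\}$, where the bound above yields only the divergent majorant $\sum\rho_{\nu_n,m}$. Restricting to this slice and factoring out $\|z'\|^{(3-N)/2}P_n^{((N-3)/2)}\mu_{y',\delta}(z')$ (or its $N=3$ analogue), the series takes the form $\sum_m a_m/\rho_{\nu_n,m}$, where
\[
a_m = \frac{U_{\nu_n}(\rho_{\nu_n,m},\|z'\|)\,U_{\nu_n}(\rho_{\nu_n,m},\|y'\|)}{\rho_{\nu_n,m}^{2}\int_{1}^{b}U_{\nu_n}(\rho_{\nu_n,m},\tau)^{2}\tau\,d\tau}.
\]
Proposition \ref{Uexp} furnishes the uniform convergence of the partial sums $\sum_{m\leq M}a_m$ in $\|z'\|\in[1,b]$ to an explicit closed form. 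Since $(\rho_{\nu_n,m}^{-1})$ is a decreasing positive null sequence, Abel summation then upgrades this to uniform convergence of $\sum_m a_m/\rho_{\nu_n,m}$ on the slice. Together with the first step, this gives uniform convergence on all of $\overline{\Omega_b}$; continuity of each term then ensures continuous vanishing of $v_{n,y}^\delta$ on the whole of $\partial\Omega_b$.

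For part (ii), uniform convergence on compact subsets of $\Omega_b$ permits termwise application of the distributional Laplacian. Lemma \ref{hc}(iii) gives
\[
-\Delta\bigl(\rho_{\nu_n,m}^{-2}u_{n,m,y}^\delta\bigr)=2\rho_{\nu_n,m}^{-1}\,u_{n,m,y}^\delta(z',y_N)\,dz'\,d\eta_{y_N},
\]
and summation (after factoring out the angular piece $\|z'\|^{(3-N)/2}P_n^{((N-3)/2)}\mu_{y',\delta}(z')$ in the case $N\geq 4$) reduces the density to $2\sum_m a_m$. By Proposition \ref{Uexp}(a) this equals $\nu_n^{-1}f_{\nu_n,\|y'\|}(\|z'\|)$, which, compared with the definition of $h_{n,y'}^\delta$, yields the claim. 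The cases $N=3,\,n\geq 1$ and $N=3,\,n=0$ are handled identically, using $T_n$ in place of $P_n^{((N-3)/2)}$ and invoking Proposition \ref{Uexp}(b) when $n=0$.
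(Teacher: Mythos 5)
Your route is genuinely different from the paper's. The paper observes that, inside $\Omega_b$, each term $\rho_{\nu_n,m}^{-2}u_{n,m,y}^{\delta}$ is the Green potential in $\Omega_b$ of the measure $2a_N^{-1}\rho_{\nu_n,m}^{-1}u_{n,m,y}^{\delta}(z',y_N)\,dz'\,d\eta_{y_N}$, notes that the potential $x\mapsto\int_{A_b'}G_{\Omega_b}(x,(z',y_N))\,dz'$ is bounded on $\Omega_b$, and then deduces uniform convergence of the series, the boundary vanishing, and the identification of $-\Delta v_{n,y}^{\delta}$ all at once from the uniform convergence of the densities supplied by Proposition \ref{Uexp}. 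You instead work directly with the series via pointwise estimates and Abel's test; this is more elementary but obliges you to control the convergence by hand, and it splits part (i) from part (ii) rather than getting both from a single potential-theoretic identity.

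There is one genuine gap in your version of part (i): uniform convergence on each slab $\{|x_N-y_N|\ge\epsilon\}$ together with uniform convergence on the slice $\{x_N=y_N\}$ does \emph{not} imply uniform convergence on $\overline{\Omega_b}$. The slab majorant $\sum_m\rho_{\nu_n,m}e^{-\rho_{\nu_n,m}\epsilon}$ degenerates as $\epsilon\to0$, and nothing in your argument controls the tails uniformly for $0<|x_N-y_N|<\epsilon$; so the sentence ``together with the first step, this gives uniform convergence on all of $\overline{\Omega_b}$'' is an invalid inference. Fortunately your own tool closes the gap: apply Abel's test not on the slice alone but with the full multiplier $b_m(x)=\rho_{\nu_n,m}^{-1}e^{-\rho_{\nu_n,m}|x_N-y_N|}$, which for each fixed $x$ is nonincreasing in $m$ (both factors decrease as $\rho_{\nu_n,m}$ increases) and is uniformly bounded by $\rho_{\nu_n,1}^{-1}$. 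Since $\sum_m a_m(z')$ converges uniformly for $\|z'\|\in[1,b]$ by Proposition \ref{Uexp}, the series $\sum_m a_m b_m$ converges uniformly on all of $\overline{\Omega_b}$ in one step, making the separate slab computation unnecessary. With that repair, the remainder of your argument --- harmonicity of the locally uniform limit of harmonic functions, continuous vanishing on $\partial\Omega_b$, and the termwise distributional Laplacian identified through Proposition \ref{Uexp} and the definition of $h_{n,y'}^{\delta}$ --- is sound and reaches the same conclusion as the paper.
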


\begin{proof}
We know from Lemma \ref{hc} and the above remark that, inside $\Omega _{b}$,
the function $\rho _{\nu _{n},m}^{-2}u_{n,m,y}^{\delta }$ is the (Green)
potential of the measure 
\begin{equation*}
2a_{N}^{-1}\rho _{\nu _{n},m}^{-1}u_{n,m,y}^{\delta }(z^{\prime
},y_{N})dz^{\prime }d\eta _{y_{N}}.
\end{equation*}%
Since the potential 
\begin{equation*}
x\mapsto \int_{A_{b}^{\prime }}G_{\Omega _{b}}(x,(z^{\prime
},y_{N}))~dz^{\prime }\text{ \ \ \ }(x\in \Omega _{b})
\end{equation*}%
is bounded on $\Omega _{b}$, it only remains to note from Proposition \ref%
{Uexp} that the series 
\begin{equation*}
z^{\prime }\mapsto 2a_{N}^{-1}\sum_{m=1}^{\infty }\rho _{\nu
_{n},m}^{-1}u_{n,m,y}^{\delta }(z^{\prime },y_{N})
\end{equation*}%
converges uniformly on $A_{b}^{\prime }$ to $a_{N}^{-1}h_{n,y^{\prime
}}^{\delta }(z^{\prime })$.
\end{proof}

\begin{lemma}
\label{Lb}Let $y\in \Omega _{b}$ and $\delta \in (0,1)$. Then the series%
\begin{equation*}
x\mapsto \left\Vert y^{\prime }\right\Vert ^{\frac{3-N}{2}%
}\sum_{n=0}^{\infty }\nu _{n}v_{n,y}^{\delta }(x)\text{ \ \ \ }(N\geq 4),
\end{equation*}%
\begin{equation*}
x\mapsto \frac{1}{2}v_{0,y}^{\delta }(x)+\sum_{n=1}^{\infty }v_{n,y}^{\delta
}(x)\text{ \ \ \ }(N=3)
\end{equation*}%
converges uniformly on $\Omega _{b}$ to a function $g_{y}^{\delta }$ which
is the Green potential in $\Omega _{b}$ of the measure $G_{A_{b}^{\prime
}}\mu _{y^{\prime },\delta }(z^{\prime })dz^{\prime }d\eta _{y_{N}}$.
\end{lemma}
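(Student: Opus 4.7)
The plan is to interpret each $v_{n,y}^\delta$ as a Green integral, interchange the sum over $n$ with that integral using the uniform convergence of the expansion of $G_{A_b^{\prime}}\mu_{y^{\prime},\delta}$ supplied by Remark \ref{BR}, and then recognise the resulting integral as the Green potential of the claimed measure.

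By Lemma \ref{La}, $v_{n,y}^\delta$ is continuous on $\overline{\Omega_b}$, vanishes on $\partial\Omega_b$, and has distributional Laplacian $-h_{n,y^{\prime}}^\delta(z^{\prime})\,dz^{\prime}\,d\eta_{y_N}$. With the Green-function normalisation employed in the proof of Lemma \ref{La}, this yields
\begin{equation*}
v_{n,y}^\delta(x) \;=\; a_N^{-1}\int_{A_b^{\prime}} G_{\Omega_b}\bigl(x,(z^{\prime},y_N)\bigr)\,h_{n,y^{\prime}}^\delta(z^{\prime})\,dz^{\prime} \qquad (x\in\Omega_b).
\end{equation*}
Let $c_n$ denote the coefficient multiplying $v_{n,y}^\delta$ in the definition of $g_y^\delta$, so that $c_n=\left\Vert y^{\prime}\right\Vert ^{(3-N)/2}\nu_n$ when $N\ge 4$, while $c_0=\tfrac{1}{2}$ and $c_n=1$ for $n\ge 1$ when $N=3$. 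A direct comparison of the definition of $h_{n,y^{\prime}}^\delta$ with the expansions in Proposition \ref{B} integrated against $\mu_{y^{\prime},\delta}$ gives the identity
\begin{equation*}
G_{A_b^{\prime}}\mu_{y^{\prime},\delta}(z^{\prime}) \;=\; \sum_{n=0}^{\infty} c_n\,h_{n,y^{\prime}}^\delta(z^{\prime}) \qquad (z^{\prime}\in A_b^{\prime}),
\end{equation*}
and the convergence is uniform on each of the subregions $\{1<\left\Vert z^{\prime}\right\Vert \le\left\Vert y^{\prime}\right\Vert \}$ and $\{\left\Vert y^{\prime}\right\Vert \le\left\Vert z^{\prime}\right\Vert <b\}$ by Remark \ref{BR}.

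Multiplying this identity by $a_N^{-1}G_{\Omega_b}(x,(z^{\prime},y_N))$ and integrating over $A_b^{\prime}$, the uniform convergence in $z^{\prime}$ together with the integrability of the Green kernel permits interchange of sum and integral, yielding
\begin{equation*}
\sum_{n=0}^{\infty} c_n\,v_{n,y}^\delta(x) \;=\; a_N^{-1}\int_{A_b^{\prime}} G_{\Omega_b}\bigl(x,(z^{\prime},y_N)\bigr)\,G_{A_b^{\prime}}\mu_{y^{\prime},\delta}(z^{\prime})\,dz^{\prime},
\end{equation*}
which is precisely the Green potential in $\Omega_b$ of the measure $G_{A_b^{\prime}}\mu_{y^{\prime},\delta}(z^{\prime})\,dz^{\prime}\,d\eta_{y_N}$. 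The full uniform convergence of the $n$-series on $\Omega_b$ is secured by the tail bound
\begin{equation*}
\Bigl|\sum_{n>N_0} c_n\,v_{n,y}^\delta(x)\Bigr| \;\le\; a_N^{-1}\Bigl(\sup_{z^{\prime}\in A_b^{\prime}}\Bigl|\sum_{n>N_0} c_n\,h_{n,y^{\prime}}^\delta(z^{\prime})\Bigr|\Bigr)\sup_{x\in\Omega_b}\int_{A_b^{\prime}} G_{\Omega_b}\bigl(x,(z^{\prime},y_N)\bigr)\,dz^{\prime},
\end{equation*}
whose final supremum is finite because $G_{\Omega_b}$ decays exponentially in $|x_N-y_N|$ and vanishes on $\partial\Omega_b$.

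The principal obstacle is precisely the uniform convergence of $\sum_n c_n h_{n,y^{\prime}}^\delta(z^{\prime})$ in $z^{\prime}$ right up to the inner and outer annular boundaries; this is what Remark \ref{BR} delivers, leveraging the $C^\infty$ smoothness of the mollifier $\mu_{y^{\prime},\delta}$ (through Proposition \ref{F}) together with the vanishing of the $\psi_{\nu_n}$ factors on $\partial A_b^{\prime}$ to force decay of the terms in $n$.
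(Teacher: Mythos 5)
Your argument is correct and follows essentially the same route as the paper: represent each $v_{n,y}^{\delta }$ as the Green potential of $a_{N}^{-1}h_{n,y^{\prime }}^{\delta }(z^{\prime })dz^{\prime }d\eta _{y_{N}}$ (Lemma \ref{La}), invoke Proposition \ref{B} and Remark \ref{BR} for the uniform convergence of $\sum_{n}c_{n}h_{n,y^{\prime }}^{\delta }$ to $G_{A_{b}^{\prime }}\mu _{y^{\prime },\delta }$ on $A_{b}^{\prime }$, and use the boundedness of the Green potential of Lebesgue measure on the cross-section to pass the sum through the integral. The only difference is that you spell out the tail estimate explicitly, which the paper leaves implicit.
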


\begin{proof}
We know from Lemma \ref{La} that, inside $\Omega _{b}$, the function $%
v_{n,y}^{\delta }$ is the Green potential of the measure $%
a_{N}^{-1}h_{n,y^{\prime }}^{\delta }(z^{\prime })dz^{\prime }d\eta _{y_{N}}$%
. Further, by Proposition \ref{B} and Remark \ref{BR}, the series%
\begin{equation*}
z^{\prime }\mapsto \left\Vert y^{\prime }\right\Vert ^{\frac{3-N}{2}%
}\sum_{n=0}^{\infty }\nu _{n}h_{n,y}^{\delta }(z^{\prime })\text{ \ \ \ }%
(N\geq 4),
\end{equation*}%
\begin{equation*}
z^{\prime }\mapsto \frac{1}{2}h_{0,y}^{\delta }(z^{\prime
})+\sum_{n=1}^{\infty }h_{n,y}^{\delta }(z^{\prime })\text{ \ \ \ }(N=3)
\end{equation*}%
converge uniformly on $A_{b}^{\prime }$ to $G_{A_{b}^{\prime }}\mu
_{y^{\prime },\delta }(z^{\prime })$. This establishes the result.
\end{proof}

\bigskip

\begin{theorem}
\label{fin}If $y\in \Omega _{b}$ and $x\in A_{b}^{\prime }\times (\mathbb{R}%
\backslash \{y_{N}\})$, then 
\begin{equation}
G_{\Omega _{b}}(x,y)=\left\{ 
\begin{array}{cc}
\dfrac{a_{N}}{a_{N-1}}\left\Vert y^{\prime }\right\Vert ^{\frac{3-N}{2}%
}\dsum\limits_{n=0}^{\infty }\nu _{n}\dsum\limits_{m=1}^{\infty }u_{n,m,y}(x)
& (N\geq 4) \\ 
\text{ } & \text{ } \\ 
\dsum\limits_{m=1}^{\infty }u_{0,m,y}(x)+2\dsum\limits_{n=1}^{\infty
}\dsum\limits_{m=1}^{\infty }u_{n,m,y}(x) & (N=3)%
\end{array}%
\right. .  \label{GOb}
\end{equation}
\end{theorem}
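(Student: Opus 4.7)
Our approach verifies that the right-hand side of \eqref{GOb}, call it $H(x,y)$, coincides with $G_{\Omega_b}(\cdot, y)$ by matching the defining properties of the Green function: harmonicity off $y$, vanishing on $\partial \Omega_b$, decay as $|x_N| \to \infty$, and the correct distributional Laplacian at $y$. The first step is convergence of the series on $A_b' \times (\mathbb{R} \setminus \{y_N\})$. By Lemma \ref{est}(i) each term is dominated by $C(b)\binom{n+N-4}{n}\rho_{\nu_n,m}^3\, d(y')\, e^{-\rho_{\nu_n,m}|x_N - y_N|}$, while Proposition \ref{Unu}(i) together with Theorem \ref{app} yields $\rho_{\nu_n,m} \geq (\nu_n + m/4)/b$; so for $|x_N - y_N|$ bounded below the exponential damping dominates the polynomial growth, giving absolute uniform convergence on compacta. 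Lemma \ref{hc}(i)--(ii) then yields harmonicity of $H(\cdot, y)$ on $A_b' \times (\mathbb{R}\setminus\{y_N\})$ and continuous vanishing on $\partial \Omega_b$, and decay at infinity in $x_N$ is immediate.

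The main step is to compute $-\Delta_x H(\cdot, y)$ distributionally on $\Omega_b$. Testing against $\phi \in C_c^\infty(\Omega_b)$ and using Lemma \ref{hc}(iii), with uniform convergence justifying the interchange of sum and integration, we obtain (for $N \geq 4$)
\begin{equation*}
\int_{\Omega_b} H\,(-\Delta \phi)\,dx = \frac{2a_N}{a_{N-1}}\|y'\|^{(3-N)/2} \int_{A_b'} \phi(z', y_N) \sum_n \nu_n \sum_m \rho_{\nu_n,m}\, u_{n,m,y}(z', y_N)\, dz'.
\end{equation*}
The factor $\rho_{\nu_n,m}$ cancels the $1/\rho_{\nu_n,m}$ built into $u_{n,m,y}$, leaving the reproducing kernel $U_{\nu_n}(\rho_{\nu_n,m}, \|z'\|) U_{\nu_n}(\rho_{\nu_n,m}, \|y'\|)/\int_1^b U_{\nu_n}^2\, t\,dt$; by Proposition \ref{SL}, its $m$-sum acts on smooth radial test functions as $\delta(\|z'\| - \|y'\|)/\|y'\|$, localizing the integral to the sphere $S_{y'}$. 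The remaining angular sum $2\sum_n \nu_n P_n^{((N-3)/2)}(\langle\omega, y'/\|y'\|\rangle)$ then equals the surface delta on $S^{N-2}$ via the addition formula for ultraspherical polynomials, and the identity $a_{N-1} = (N-3)\sigma_{N-1}$ arranges for every factor to cancel except $a_N \phi(y)$. The case $N = 3$ is analogous with Chebyshev polynomials, the coefficient $\tfrac{1}{2}$ on the $n=0$ term reflecting the usual Fourier-series convention.

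Having established $-\Delta_x H(\cdot, y) = a_N \delta_y$, the difference $H(\cdot, y) - G_{\Omega_b}(\cdot, y)$ extends to a bounded harmonic function on $\Omega_b$ vanishing on $\partial \Omega_b$ and at infinity, hence vanishes by the maximum principle. The principal obstacle is the rigorous distributional identification: Proposition \ref{SL} provides only pointwise convergence for continuous BV functions, so extending it to act on smooth test functions requires either an approximation argument or an appeal to the $L^2$-completeness of $\{U_{\nu_n}(\rho_{\nu_n,m}, \cdot)\}_m$ on $([1,b], t\,dt)$; the angular ultraspherical series must likewise be interpreted distributionally.
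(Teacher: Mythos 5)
Your overall strategy (show that the right-hand side has the defining properties of $G_{\Omega_b}(\cdot,y)$ and conclude by the maximum principle) is reasonable in outline, but the step that carries all of the analytic content is exactly the one you leave as a formal computation, and as written it does not go through. The difficulty is that the double series $\sum_n\sum_m u_{n,m,y}$ is \emph{not} absolutely or uniformly convergent on any neighbourhood of the hyperplane $x_N=y_N$: the bound from Lemma \ref{est}(i) is $C\binom{n+N-4}{n}\rho_{\nu_n,m}^3 e^{-\rho_{\nu_n,m}|x_N-y_N|}$, which degenerates as $x_N\to y_N$, and the $m$-sum alone already behaves like $|x_N-y_N|^{-4}$. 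Since any test function detecting the singularity at $y$ must have support meeting $\{x_N=y_N\}$, "uniform convergence justifying the interchange of sum and integration" is not available, and the subsequent identification of the $m$-sum with $\delta(\|z'\|-\|y'\|)/\|y'\|$ and of the angular sum with a surface delta via the addition formula is purely formal (the angular series $\sum_n\nu_n P_n^{((N-3)/2)}$ diverges pointwise). You acknowledge this yourself in your closing paragraph, but acknowledging the obstacle is not the same as overcoming it: Proposition \ref{SL} gives only pointwise convergence for continuous BV functions, and no approximation or completeness argument is actually supplied.

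The paper's proof is organized precisely to avoid these issues, by two devices you do not use. First, instead of the raw double series it forms the $\rho_{\nu_n,m}^{-2}$-weighted series $g_y^{\delta}=\sum_n c_n\sum_m\rho_{\nu_n,m}^{-2}u_{n,m,y}^{\delta}$, which \emph{does} converge uniformly on $\overline{\Omega_b}$ (Lemmas \ref{La} and \ref{Lb}); its distributional Laplacian is computed not by delta-function heuristics but by summing the series in closed form, using the explicit eigenfunction expansion of Proposition \ref{Uexp} for the radial sum and the known expansion of $G_{A_b'}$ (Proposition \ref{B}) for the angular sum, so that $g_y^{\delta}$ is identified as the Green potential of the measure $G_{A_b'}\mu_{y',\delta}(z')\,dz'\,d\eta_{y_N}$. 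The unweighted series of the theorem is then recovered by differentiating $g_y^{\delta}$ twice in $x_N$, term by term, on the region $x_N\neq y_N$ where each term is smooth and the differentiated series still converges locally uniformly; the relation $-\Delta\,\partial^2 g_y^{\delta}/\partial x_N^2=a_{N-1}\,d\mu_{y',\delta}\,d\eta_{y_N}$ then pins down the sum as $\tfrac{a_{N}}{a_{N-1}}^{-1}G_{\Omega_b}(\mu_{y',\delta}\eta_{y_N})$ without ever testing the divergent series against functions supported near $y$. Second, the mollifying measure $\mu_{y',\delta}$ on the sphere $S_{y'}$ is essential: it is what makes the angular (ultraspherical/Chebyshev) series converge uniformly (Remark \ref{BR} and Proposition \ref{F}), and the theorem is obtained only in the final limit $\delta\to 0$. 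To repair your argument you would need to supply substitutes for both of these devices; as it stands, the distributional identity $-\Delta H=a_N\delta_y$ is asserted rather than proved.
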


\begin{proof}
Let $g_{y}^{\delta }$ be as in Lemma \ref{Lb}. By Lemma \ref{est}(i), Remark %
\ref{del} and Proposition \ref{Unu}(i) we can differentiate term-by-term to
see that 
\begin{equation}
\frac{\partial ^{2}g_{y}^{\delta }}{\partial x_{N}^{2}}=\left\{ 
\begin{array}{cc}
\left\Vert y^{\prime }\right\Vert ^{\frac{3-N}{2}}\sum\limits_{n=0}^{\infty
}\nu _{n}\sum\limits_{m=1}^{\infty }u_{n,m,y}^{\delta }(x) & (N\geq 4) \\ 
\text{ } & \text{ } \\ 
\frac{1}{2}\sum\limits_{m=1}^{\infty }u_{0,m,y}^{\delta
}(x)+\sum\limits_{n=1}^{\infty }\sum\limits_{m=1}^{\infty }u_{n,m,y}^{\delta
}(x) & (N=3)%
\end{array}%
\right.  \label{dg}
\end{equation}%
on $A_{b}^{\prime }\times (\mathbb{R}\backslash \{y_{N}\})$. The same
estimates show that the above function has limit $0$ on approach to $\infty $
within $\Omega _{b}$. By Lemma \ref{Lb}, we know that $\partial
^{2}g_{y}^{\delta }/\partial x_{N}^{2}$ is harmonic on $A_{b}^{\prime
}\times (\mathbb{R}\backslash \{y_{N}\})$ and vanishes on $\partial
A_{b}^{\prime }\times (\mathbb{R}\backslash \{y_{N}\})$. Further, 
\begin{equation*}
-\Delta \frac{\partial ^{2}g_{y}^{\delta }}{\partial x_{N}^{2}}=(-\Delta
G_{A_{b}^{\prime }}\mu _{y^{\prime },\delta })d\eta _{y_{N}}=a_{N-1}d\mu
_{y^{\prime },\delta }d\eta _{y_{N}}
\end{equation*}%
in the sense of distributions, so 
\begin{equation*}
\frac{a_{N}}{a_{N-1}}\frac{\partial ^{2}g_{y}^{\delta }}{\partial x_{N}^{2}}%
=G_{\Omega _{b}}(\mu _{y^{\prime },\delta }\eta _{y_{N}})\text{ \ \ on \ \ }%
\Omega _{b}.
\end{equation*}%
Finally, as $\delta \rightarrow 0$, we note that $G_{\Omega _{b}}(\mu
_{y^{\prime },\delta }\eta _{y_{N}})\rightarrow G_{\Omega _{b}}(\cdot ,y)$,
and from (\ref{dg}) we see that $(a_{N}/a_{N-1})\partial ^{2}g_{y}^{\delta
}/\partial x_{N}^{2}$ converges to the right hand side of (\ref{GOb})
locally uniformly on $A_{b}^{\prime }\times (\mathbb{R}\backslash \{y_{N}\})$%
, since $a_{3}/a_{2}=2$.
\end{proof}

\begin{corollary}
\label{C}Let $y\in \Omega _{b}$, $b^{\prime }>b$, $\varepsilon >0$ and $%
\delta \in (0,1)$. Then $G_{\Omega _{b}}(\cdot ,y)$ has a harmonic extension 
$\widetilde{G}_{\Omega _{b}}(\cdot ,y)$ to the set%
\begin{equation*}
L=\left\{ (x^{\prime },x_{N}):x_{N}\neq y_{N},\left\Vert x^{\prime
}\right\Vert >e^{-\left\vert x_{N}-y_{N}\right\vert /b^{\prime }}\right\}
\end{equation*}%
which satisfies%
\begin{equation}
\left\vert \widetilde{G}_{\Omega _{b}}(x,y)\right\vert \leq C(N,b,b^{\prime
},\delta ,\varepsilon )d(y^{\prime })\text{ \ \ \ }(x\in L,\left\vert
x_{N}-y_{N}\right\vert >\varepsilon ,\left\Vert x^{\prime }\right\Vert
>\delta ).  \label{bdg}
\end{equation}
\end{corollary}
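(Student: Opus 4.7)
The plan is to define $\widetilde{G}_{\Omega _{b}}(\cdot ,y)$ on $L$ by the same double series that appears on the right-hand side of (\ref{GOb}) in Theorem \ref{fin}. Each individual term $u_{n,m,y}$ is already harmonic on $(\mathbb{R}^{N-1}\setminus \{0'\})\times (\mathbb{R}\setminus \{y_N\})\supseteq L$ by Lemma \ref{hc}(i), so once uniform convergence on compact subsets of $L$ is established, harmonicity of the limit is automatic, and agreement with $G_{\Omega _{b}}(\cdot ,y)$ on $A_{b}^{\prime }\times (\mathbb{R}\setminus \{y_N\})$ is immediate from Theorem \ref{fin}.

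The main work is verifying convergence, and the main obstacle is the inner region $\{\Vert x'\Vert <1\}$, where Lemma \ref{est}(iii) contributes the dangerous factor $\Vert x'\Vert ^{-\nu _{n}}$. Here I would use the defining inequality of $L$ to write $\Vert x'\Vert ^{-\nu _{n}}\leq e^{\nu _{n}|x_N-y_N|/b'}$, and combine it with the lower bound $\rho _{\nu _{n},m}\geq (\nu _{n}+m/4)/b$ from Proposition \ref{Unu}(i) to obtain
\begin{equation*}
\Vert x'\Vert ^{-\nu _{n}}e^{-\rho _{\nu _{n},m}|x_N-y_N|}\leq \exp\bigl(-\nu _{n}|x_N-y_N|(1/b-1/b')\bigr)\,\exp\bigl(-m|x_N-y_N|/(4b)\bigr).
\end{equation*}
Since $b'>b$, the coefficient $1/b-1/b'$ is strictly positive, so this expression decays exponentially in both $n$ and $m$ whenever $|x_N-y_N|>\varepsilon$. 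The remaining polynomial factors $\binom{n+N-4}{n}\rho _{\nu _{n},m}^{3}$ from Lemma \ref{est}(iii) grow only polynomially in $n$ and $m$ (using any crude upper bound $\rho _{\nu _{n},m}\leq C(\nu _{n}+m)$, or simply absorbing $\rho ^{3}$ into half of the exponential via $\rho ^{3}e^{-\rho \varepsilon /2}\leq C(\varepsilon )$), and $\Vert x'\Vert ^{-(N-1)/2}\leq \delta ^{-(N-1)/2}$ when $\Vert x'\Vert >\delta $. So the series converges absolutely and uniformly on the prescribed set, with bound proportional to $d(y')$ (the $d(y')$ factor being present in every estimate of Lemma \ref{est}).

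The regions $\{1\leq \Vert x'\Vert \leq b\}$ and $\{\Vert x'\Vert >b\}$ are handled more easily by Lemma \ref{est}(i) and (ii) respectively, using the same exponential decay in $\rho _{\nu _{n},m}$ from Proposition \ref{Unu}(i) to dominate the polynomial prefactors (no dangerous $\Vert x'\Vert ^{-\nu _{n}}$ appears there). Assembling the three cases yields uniform convergence of the series on compact subsets of $L\cap \{|x_N-y_N|>\varepsilon ,\Vert x'\Vert >\delta \}$, hence a well-defined harmonic extension $\widetilde{G}_{\Omega _{b}}(\cdot ,y)$ to $L$ (exhausting $L$ by such sets as $\varepsilon ,\delta \downarrow 0$ and $b'$ is chosen freely larger than $b$), together with the stated bound (\ref{bdg}). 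The crux of the argument is therefore the harmonious interaction between the exponent $1/b'$ built into $L$ and the lower bound $1/b$ on $\rho _{\nu _{n},m}/\nu _{n}$ coming from Theorem \ref{app}; strict inequality $b'>b$ is what buys the exponential gain in $n$.
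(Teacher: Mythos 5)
Your proposal is correct and follows essentially the same route as the paper: the extension is defined by the double series of Theorem \ref{fin}, with convergence on $L$ controlled by Lemma \ref{est}(i)--(iii) together with the lower bound $\rho _{\nu _{n},m}\geq (\nu _{n}+m/4)/b$ of Proposition \ref{Unu}(i), the key point being exactly the one you identify, namely that $b^{\prime }>b$ turns the factor $\left\Vert x^{\prime }\right\Vert ^{-\nu _{n}}e^{-\rho _{\nu _{n},m}\left\vert x_{N}-y_{N}\right\vert }$ into exponential decay in both $n$ and $m$. The paper states this in two sentences; your write-up is the same argument with the estimates made explicit.
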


\begin{proof}
Theorem \ref{fin}, and the estimates in Lemma \ref{est}(ii) and Proposition %
\ref{Unu}(i), together show that $G_{\Omega _{b}}(\cdot ,y)$ has a harmonic
extension to the set $\left( \mathbb{R}^{N-1}\backslash \overline{B}^{\prime
}\right) \times (\mathbb{R}\backslash \{y_{N}\})$ that satisfies $\left\vert 
\widetilde{G}_{\Omega _{b}}(x,y)\right\vert \leq C(N,b,\varepsilon
)d(y^{\prime })$\ when $\left\vert x_{N}-y_{N}\right\vert >\varepsilon $.
Further, by Lemma \ref{est}(iii), we see that $G_{\Omega _{b}}(\cdot ,y)$
has a harmonic extension to $L$ that satisfies (\ref{bdg}).
\end{proof}

\bigskip

Theorem \ref{main} is an immediate consequence of the following result,
subject to the verification below of Theorem \ref{app}.

\begin{theorem}
Let $c>0$ and $h$ be a harmonic function on $A_{b}^{\prime }\times (-c,c)$
which continuously vanishes on $\partial A_{b}^{\prime }\times (-c,c)$. Then 
$h$ has a harmonic extension to the set 
\begin{equation*}
\left\{ (x^{\prime },x_{N}):\left\vert x_{N}\right\vert <c,\left\Vert
x^{\prime }\right\Vert >e^{(\left\vert x_{N}\right\vert -c)/b}\right\} .
\end{equation*}
\end{theorem}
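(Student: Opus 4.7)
The strategy is to realize $h$ locally as a Green potential on $\Omega_b$ whose source measure is supported away from the slice where we want to extend, and then replace $G_{\Omega_b}(\cdot,y)$ by the harmonic extension $\widetilde G_{\Omega_b}(\cdot,y)$ furnished by Corollary \ref{C}. Fix any $a\in(0,c)$ and any $a''\in(a,c)$, and pick a cut-off $\chi\in C_c^{\infty}(\mathbb{R})$ with $\chi\equiv 1$ on $[-a,a]$ and $\mathrm{supp}\,\chi\subset[-a'',a'']$. Define
\[
H(x)=\chi(x_N)h(x)\qquad(x\in \overline{\Omega_b}),
\]
extended by $0$ outside $A_b'\times(-a'',a'')$. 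Because $h$ is harmonic on $A_b'\times(-c,c)$ and vanishes continuously on $\partial A_b'\times(-c,c)$, the function $H$ is $C^{2}$ on $\Omega_b$, continuous on $\overline{\Omega_b}$, vanishes on $\partial\Omega_b$ and at infinity, and
\[
-\Delta H \;=\; -(\chi''(x_N)\,h + 2\chi'(x_N)\,\partial_N h)
\]
is supported in the two slabs $A_b'\times([-a'',-a]\cup[a,a''])$, since $\chi\Delta h=0$.

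Next, by Green's identity and the vanishing properties of $H$, one has the representation
\[
H(x)=\int_{\Omega_b} G_{\Omega_b}(x,y)\,d\mu(y)\qquad(x\in\Omega_b),
\]
where $d\mu(y)=a_N^{-1}(-\Delta H)(y)\,dy$ is a bounded signed measure supported in $K:=A_b'\times([-a'',-a]\cup[a,a''])$. In particular, for $|x_N|<a$ the cut-off equals $1$, so that
\[
h(x)=\int_{K}G_{\Omega_b}(x,y)\,d\mu(y)\qquad(x\in A_b'\times(-a,a)).
\]
Now invoke Corollary \ref{C}: for each $b'>b$, $G_{\Omega_b}(\cdot,y)$ extends to a harmonic function $\widetilde G_{\Omega_b}(\cdot,y)$ on $L(b',y)=\{(x',x_N):x_N\neq y_N,\ \|x'\|>e^{-|x_N-y_N|/b'}\}$ with $|\widetilde G_{\Omega_b}(x,y)|\le C\,d(y')$ on compacta avoiding $\{x_N=y_N\}$ and $\{x'=0'\}$. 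Define
\[
\widetilde h_{a}(x)=\int_{K}\widetilde G_{\Omega_b}(x,y)\,d\mu(y)
\]
on the set of $x$ where the integrand is harmonic in $x$ for $\mu$-a.e.\ $y$ and the bound of Corollary \ref{C} is uniform enough to permit differentiation under the integral.

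For $|x_N|<a$ and $y_N\in[a,a'']\cup[-a'',-a]$ one has $|x_N-y_N|\ge a-|x_N|>0$, so the condition $\|x'\|>e^{-|x_N-y_N|/b'}$ holds simultaneously for all $y\in K$ as soon as $\|x'\|>e^{-(a-|x_N|)/b'}$. Hence, using the uniform bound from Corollary \ref{C} to justify harmonicity via Fubini and differentiation under the integral, $\widetilde h_{a}$ is harmonic on
\[
\bigcup_{b'>b}\bigl\{(x',x_N):|x_N|<a,\ \|x'\|>e^{-(a-|x_N|)/b'}\bigr\}
\;=\;\bigl\{(x',x_N):|x_N|<a,\ \|x'\|>e^{-(a-|x_N|)/b}\bigr\},
\]
and agrees with $h$ on the connected open set $A_b'\times(-a,a)$, so it is \emph{the} harmonic extension of $h|_{A_b'\times(-a,a)}$. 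By uniqueness on connected open sets, the functions $\widetilde h_{a}$ obtained for different choices of $a$ (and auxiliary $a'',\chi$) all agree on overlaps, so they glue to a single harmonic extension on
\[
\bigcup_{0<a<c}\bigl\{|x_N|<a,\ \|x'\|>e^{-(a-|x_N|)/b}\bigr\}
=\bigl\{|x_N|<c,\ \|x'\|>e^{(|x_N|-c)/b}\bigr\},
\]
which is the claimed extension domain.

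The main technical obstacle is the passage from the formal identity $h=\int G_{\Omega_b}(\cdot,y)\,d\mu$ to the harmonic extension $\int\widetilde G_{\Omega_b}(\cdot,y)\,d\mu$: one must show that the integral against $\widetilde G_{\Omega_b}$ is locally uniformly convergent and that partial derivatives can be passed inside. This is exactly where the quantitative bound $|\widetilde G_{\Omega_b}(x,y)|\le C(N,b,b',\delta,\varepsilon)d(y')$ of Corollary \ref{C} is used, together with the fact that $\mu$ has compact support bounded away from $x_N$ (uniformly on compact subsets of the target region) and bounded away from $\partial A_b'$ after absorbing the factor $d(y')$ into the $L^1$ density of $\mu$. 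Once this routine but somewhat delicate verification is in place, the remainder of the argument is a clean geometric unpacking of the exponential bound as described above.
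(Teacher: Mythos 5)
Your argument is correct and its skeleton is the same as the paper's: represent $h$ on a shorter sub-cylinder as a Green potential $G_{\Omega _{b}}\mu $ with $\mu $ supported in two slabs bounded away from the slice where the extension is sought, replace $G_{\Omega _{b}}(\cdot ,y)$ by $\widetilde{G}_{\Omega _{b}}(\cdot ,y)$ from Corollary \ref{C}, and then let the auxiliary parameters tend to their limits. Where you genuinely differ is in the construction of $\mu $: the paper first splits $h=h_{1}-h_{2}$ into positive harmonic functions and modifies each $h_{i}$ by solving Dirichlet problems in two transition slabs, so that the resulting measures are concentrated on the four annular cross-sections $\overline{A_{b}^{\prime }}\times \{\pm c^{\prime },\pm c^{\prime \prime }\}$ (jumps of the normal derivative), whereas you use a smooth cut-off $\chi (x_{N})$ and obtain an absolutely continuous measure with density $a_{N}^{-1}(\chi ^{\prime \prime }h+2\chi ^{\prime }\partial _{N}h)$. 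Your version is arguably cleaner, but it imports one ingredient you should make explicit: finiteness of $\mu $ (indeed of $\int d(y^{\prime })\,d\left\vert \mu \right\vert $) requires $\partial _{N}h$ to be integrable up to $\partial A_{b}^{\prime }$, which follows from boundary regularity for harmonic functions vanishing continuously on a smooth boundary portion (for instance, $\left\vert h\right\vert \leq Cd(y^{\prime })$ by a barrier combined with the interior gradient estimate on balls of radius comparable to $d(y^{\prime })$). Both your write-up and the paper's are equally terse about the identity $H=G_{\Omega _{b}}\mu $ itself, which needs a Phragm\'{e}n--Lindel\"{o}f/maximum-principle step on the unbounded cylinder to show that the bounded harmonic difference vanishing on $\partial \Omega _{b}$ and at infinity is zero; this is standard but deserves a sentence. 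The remaining bookkeeping --- uniformity of the lower bound on $\left\vert x_{N}-y_{N}\right\vert $ over $y$ in the support of $\mu $, the unions over $b^{\prime }>b$ and over $a<c$, and gluing via uniqueness of harmonic continuation on connected overlaps --- is correct and matches the paper's final step.
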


\begin{proof}
Let $0<c^{\prime \prime }<c^{\prime }<c$ and $b^{\prime }>b$. On $%
A_{b}^{\prime }\times (-c^{\prime },c^{\prime })$ we can write $h$ as the
difference $h_{1}-h_{2}$ of two positive harmonic functions that vanish on $%
\partial A_{b}^{\prime }\times (-c^{\prime },c^{\prime })$. (We can write $h$
as the difference of two Dirichlet solutions there with non-negative
boundary data.) Next, let $h_{i}^{\ast }$ $(i=1,2)$ be defined as $h_{i}$ on 
$A_{b}^{\prime }\times \lbrack -c^{\prime \prime },c^{\prime \prime }]$, as $%
0$ on $\left( A_{b}^{\prime }\times (-\infty ,-c^{\prime }]\right) \cup
\left( A_{b}^{\prime }\times \lbrack c^{\prime },\infty )\right) \cup
\partial \Omega _{b}$, and extended to $\Omega _{b}$ by solving the
Dirichlet problem in $A_{b}^{\prime }\times (-c^{\prime },-c^{\prime \prime
})$ and in $A_{b}^{\prime }\times (c^{\prime \prime },c^{\prime })$. Then $%
h_{i}^{\ast }$ is subharmonic on $A_{b}^{\prime }\times \left( (-\infty
,-c^{\prime \prime })\cup (c^{\prime \prime },\infty )\right) $ and
superharmonic on $A_{b}^{\prime }\times (-c^{\prime },c^{\prime })$, and
continuously vanishes on $\partial \Omega _{b}$. We\ can write $h_{i}^{\ast
} $ as $G_{\Omega _{b}}\mu _{i}$, where $\mu _{i}$ is a signed measure on $%
\overline{A_{b}^{\prime }}\times \{\pm c^{\prime },\pm c^{\prime \prime }\}$
satisfying $\int d(y^{\prime })d\left\vert \mu _{i}\right\vert (y)<\infty $.

It follows from Corollary \ref{C} that the formula 
\begin{equation*}
\widetilde{h}(x)=\int_{\overline{A_{b}^{\prime }}\times \{\pm c^{\prime
},\pm c^{\prime \prime }\}}\widetilde{G}_{\Omega _{b}}(x,y)d(\mu _{1}-\mu
_{2})(y)
\end{equation*}%
defines a harmonic extension of $h$ from $A_{b}^{\prime }\times (-c^{\prime
\prime },c^{\prime \prime })$\ to the set 
\begin{equation*}
\left\{ x_{N}<c^{\prime \prime },\left\Vert x^{\prime }\right\Vert
>e^{(x_{N}-c^{\prime \prime })/b^{\prime }}\right\} \cap \left\{
x_{N}>-c^{\prime \prime },\left\Vert x^{\prime }\right\Vert
>e^{-(x_{N}+c^{\prime \prime })/b^{\prime }}\right\} .
\end{equation*}%
Since $c^{\prime \prime }$ can be arbitrarily close to $c$, and $b^{\prime }$
can be arbitrarily close to $b$, the result follows.
\end{proof}

\section{Proof of Theorem \protect\ref{app}\label{appbegin}}

Let 
\begin{equation*}
u_{\nu }\left( x,y\right) =x^{1/2}J_{\nu }\left( y\right) Y_{\nu }\left(
xy\right) -x^{1/2}J_{\nu }\left( xy\right) Y_{\nu }\left( y\right) \text{ \
\ \ }(x>0,y>0).
\end{equation*}%
We know that the cylinder function $x\longmapsto u_{\nu }\left( x,y\right) $
has infinitely many positive zeros which are all simple (see Sections 15.21,
15.24 of \cite{Wat}). Let $x_{\nu ,k}\left( y\right) $ denote the $k$th zero
of this function in $(1,\infty )$. By Lemma \ref{JV}(vi) and Sturm's
comparison theorem \cite{Dur},%
\begin{equation}
x_{\nu ,k+1}\left( y\right) -x_{\nu ,k}\left( y\right) \geq \frac{\pi }{y}%
\text{ \ \ \ }\left( \nu \geq \frac{1}{2}\right) .  \label{eqZeroDiff}
\end{equation}%
Further, by Sturm's convexity theorem \cite{Dur},%
\begin{equation}
x_{\nu ,k+1}\left( y\right) -x_{\nu ,k}\left( y\right) <x_{\nu ,k}\left(
y\right) -x_{\nu ,k-1}\left( y\right) \text{ \ \ \ }\left( k\geq 2,\nu \geq 
\frac{1}{2}\right) .  \label{SCT}
\end{equation}

We collect together some useful facts about $u_{\nu }\left( x,y\right) $
below.

\begin{lemma}
\label{uxy}If $u_{\nu }\left( x_{0},y_{0}\right) =0$, where $x_{0}>1$, then 
\begin{equation}
\frac{\partial u_{\nu }}{\partial x}\left( x_{0},y_{0}\right) \frac{\partial
u_{\nu }}{\partial y}\left( x_{0},y_{0}\right)
=2y_{0}\int_{1}^{x_{0}}\left\{ u\left( x,y_{0}\right) \right\} ^{2}dx>0,
\label{equxuy}
\end{equation}%
\begin{equation}
\frac{\partial u_{\nu }/\partial x}{\partial u_{\nu }/\partial y}\left(
x_{0},y_{0}\right) >\frac{y_{0}}{x_{0}},  \label{eqbasic1}
\end{equation}%
\begin{equation}
\left( \dfrac{\partial u_{\nu }}{\partial x}\left( x\dfrac{\partial u_{\nu }%
}{\partial x}-y\dfrac{\partial u_{\nu }}{\partial y}\right) \right) \left(
x_{0},y_{0}\right) =\frac{4}{\pi ^{2}},  \label{wronk}
\end{equation}%
\begin{equation}
2x_{0}\frac{\partial ^{2}u_{\nu }}{\partial y\partial x}\left(
x_{0},y_{0}\right) =2\dfrac{\partial u_{\nu }}{\partial y}\left(
x_{0},y_{0}\right) +y_{0}\frac{\partial ^{2}u_{\nu }}{\partial y^{2}}\left(
x_{0},y_{0}\right) .  \label{P13}
\end{equation}
\end{lemma}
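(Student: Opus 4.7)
\medskip

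\noindent\textbf{Proof plan for Lemma \ref{uxy}.}

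\medskip

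The plan is to prove the four assertions in the order (\ref{equxuy}), (\ref{wronk}), (\ref{eqbasic1}), (\ref{P13}), using Sturm--Liouville bookkeeping for the first, direct manipulation of Bessel identities (Lemma \ref{JV}(iii) and Bessel's equation) for the second and fourth, and elementary algebra for the third. Throughout I abbreviate $J=J_{\nu}(y_{0})$, $Y=Y_{\nu}(y_{0})$, $\tilde J=J_{\nu}(x_{0}y_{0})$, $\tilde Y=Y_{\nu}(x_{0}y_{0})$, with primes denoting derivatives with respect to the argument; note that $u_{\nu}(1,y)\equiv 0$ because the bracket in the definition vanishes at $t=1$.

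For (\ref{equxuy}), the first step is to notice that, with $y$ fixed, $u_{\nu}(\cdot ,y)$ is of the form $\sqrt{x}\,C_{\nu}(yx)$, so by Lemma \ref{JV}(vi) it satisfies
\[
v''+\Bigl(y^{2}+\tfrac{1/4-\nu^{2}}{x^{2}}\Bigr)v=0.
\]
For distinct values $y$ and $\tilde y$, write $v(x)=u_{\nu}(x,y_{0})$ and $\tilde v(x)=u_{\nu}(x,\tilde y)$; the Lagrange identity gives $(\tilde v v'-v\tilde v')'=(\tilde y^{2}-y_{0}^{2})\,v\tilde v$. Integrating from $1$ to $x_{0}$ and using the vanishing $v(1)=\tilde v(1)=0=v(x_{0})$ leaves $\tilde v(x_{0})\,v'(x_{0})=(\tilde y^{2}-y_{0}^{2})\int_{1}^{x_{0}}v\tilde v\,dx$. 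Dividing by $\tilde y-y_{0}$ and letting $\tilde y\to y_{0}$ yields (\ref{equxuy}); positivity is automatic since $u_{\nu}(\cdot ,y_{0})$ is not identically zero on $(1,x_{0})$.

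For (\ref{wronk}), direct differentiation shows that at the zero $(x_{0},y_{0})$
\[
\tfrac{\partial u_{\nu}}{\partial x}(x_{0},y_{0})=\sqrt{x_{0}}\,y_{0}(J\tilde Y'-\tilde J'Y),
\]
and a short manipulation (subtracting $\tfrac{x_{0}}{y_{0}}u_{x}$ from $u_{y}$ kills the terms with $\tilde Y',\tilde J'$ and produces the factor $J'\tilde Y-Y'\tilde J$) yields
\[
\bigl(x_{0}u_{x}-y_{0}u_{y}\bigr)(x_{0},y_{0})=\sqrt{x_{0}}\,y_{0}(Y'\tilde J-J'\tilde Y).
\]
The product of these two quantities is
\[
x_{0}y_{0}^{2}(J\tilde Y'-\tilde J'Y)(Y'\tilde J-J'\tilde Y),
\]
which I simplify by using the zero condition $J\tilde Y=Y\tilde J$ to substitute $Y=J\tilde Y/\tilde J$ (treating separately the degenerate cases where one of $J,Y,\tilde J,\tilde Y$ vanishes); after cancellation the two factors each reduce by Lemma \ref{JV}(iii) to $\tfrac{2}{\pi y_{0}\,\tilde J/\text{etc}}$ times a sign factor, and the product comes out to $\tfrac{4}{\pi^{2}x_{0}y_{0}^{2}}$, proving (\ref{wronk}).

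Then (\ref{eqbasic1}) follows by combining (\ref{wronk}) with (\ref{equxuy}): (\ref{wronk}) rearranges as $u_{x}\cdot(x_{0}u_{x}-y_{0}u_{y})>0$, while (\ref{equxuy}) gives $u_{x}u_{y}>0$, so $u_{x}$ and $u_{y}$ share a sign; dividing the first inequality by $u_{y}\cdot u_{x}>0$ gives $x_{0}u_{x}/u_{y}>y_{0}$, i.e.\ (\ref{eqbasic1}). Finally, for (\ref{P13}), I expand $u_{y}$, $u_{yy}$ and $u_{xy}$ at $(x_{0},y_{0})$ in the $J,\tilde J,Y,\tilde Y$ notation, then use Bessel's equation to reduce $y_{0}J''$, $y_{0}Y''$, $y_{0}\tilde J''$, $y_{0}\tilde Y''$ to combinations of $J,J',\tilde J,\tilde J'$ etc. (the relations $y_{0}J''(y_{0})=-J'-(y_{0}-\nu^{2}/y_{0})J$ and $y_{0}\tilde J''=-\tilde J'-x_{0}^{2}y_{0}\tilde J+(\nu^{2}/y_{0})\tilde J$, and similarly for the $Y$-versions). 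Every resulting term containing the factor $(J\tilde Y-\tilde J Y)$ drops out by the zero condition, and the surviving terms in $2x_{0}u_{xy}-2u_{y}-y_{0}u_{yy}$ cancel identically.

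The main obstacle is the second step: in (\ref{wronk}) one must track signs carefully when substituting $J\tilde Y=Y\tilde J$ into a product of differences, and in (\ref{P13}) the expansion of $u_{yy}$ and $u_{xy}$ involves several cross terms whose cancellation is only visible after the Bessel reductions are applied to both the $y_{0}$-argument and the $x_{0}y_{0}$-argument derivatives. The remaining identities are then either immediate consequences or routine Sturm--Liouville integrations.
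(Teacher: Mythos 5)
Your proposal is correct, but it reaches the four identities by routes that are genuinely different from the paper's. The paper organizes everything around the single observation that, for fixed $y$, both $u$ and $w=xu_{x}-yu_{y}$ solve $v_{xx}=qv$ with $q=(\nu ^{2}-\tfrac{1}{4})x^{-2}-y^{2}$: it gets (\ref{equxuy}) by integrating $\partial _{x}(u_{xy}u-u_{x}u_{y})=q_{y}u^{2}=-2yu^{2}$ over $[1,x_{0}]$; it gets (\ref{wronk}) from the constancy in $x$ of $uw_{x}-u_{x}w$, evaluated at $x=1$ via Lemma \ref{JV}(iii); it gets (\ref{P13}) by differentiating that same constant identity in $y$; and it gets (\ref{eqbasic1}) from a separate first integral $f(x)=2y^{2}\int_{1}^{x}u^{2}\,dt+xqu^{2}-xu_{x}^{2}+u_{x}u\equiv -4/\pi ^{2}$. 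You instead prove (\ref{equxuy}) by the two-parameter Lagrange identity and the limit $\tilde{y}\rightarrow y_{0}$ (equivalent in substance; do note explicitly that $u(\cdot ,y_{0})$ is a nontrivial solution of a second-order ODE, hence not identically zero, so the integral is strictly positive). You prove (\ref{wronk}) by direct expansion: I checked that $x_{0}y_{0}^{2}\,(J\tilde{Y}'-\tilde{J}'Y)(\tilde{J}Y'-J'\tilde{Y})$ does collapse, after substituting $J\tilde{Y}=\tilde{J}Y$, to $x_{0}y_{0}^{2}\,(JY'-J'Y)(\tilde{J}\tilde{Y}'-\tilde{J}'\tilde{Y})=4/\pi ^{2}$, with the degenerate cases giving the same value, so this works. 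Your deduction of (\ref{eqbasic1}) as an immediate corollary of (\ref{wronk}) and (\ref{equxuy}) is valid and arguably cleaner than the paper's auxiliary $f$; indeed the paper's identity $2y_{0}^{2}\int_{1}^{x_{0}}u^{2}-x_{0}u_{x}^{2}=-4/\pi ^{2}$ is exactly (\ref{wronk}) rewritten via (\ref{equxuy}), so the two arguments carry the same information. Finally, your computation for (\ref{P13}) also checks out: writing $u=\sqrt{x}\,\varphi $ with $\varphi =J\tilde{Y}-\tilde{J}Y$, the combination $2x\varphi _{xy}-\varphi _{y}-y\varphi _{yy}$ reduces, after applying Bessel's equation to both the $y$- and $xy$-argument second derivatives, to $(y-x^{2}y)\varphi $, which vanishes on the zero set. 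What the paper's route buys is uniformity and brevity (two conserved quantities of the ODE do all the work, with only one Wronskian evaluation at $x=1$); what your route buys is that (\ref{wronk}) and (\ref{P13}) become self-contained verifications needing no clever auxiliary functions, at the cost of longer computations whose sign bookkeeping you should write out in full.
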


\begin{proof}
Inequality (\ref{equxuy}) is known \cite{Coc}, but we will give a short
alternative proof. We abbreviate $u_{\nu }$ to $u$, define $q(x,y)=\left(
\nu ^{2}-\frac{1}{4}\right) x^{-2}-y^{2}$, and note from Lemma \ref{JV}(vi)
that $u_{xx}=qu$. Hence $u_{xxy}=qu_{y}+q_{y}u$, and so 
\begin{equation*}
\frac{\partial }{\partial x}\left( u_{xy}u-u_{x}u_{y}\right)
=u_{xxy}u-u_{xx}u_{y}=(q_{y}u+qu_{y})u-quu_{y}=q_{y}u^{2}=-2yu^{2}.
\end{equation*}%
Since $u(1,\cdot )\equiv 0$, we can set $y=y_{0}$ and integrate the above
equation with respect to $x$ over $[1,x_{0}]$ to obtain (\ref{equxuy}).

We fix $y$ and define 
\begin{equation*}
f\left( x\right) =2y^{2}\int_{1}^{x}\left\{ u\left( t,y\right) \right\}
^{2}dt+xqu^{2}-x\left( u_{x}\right) ^{2}+u_{x}u.
\end{equation*}%
Using the fact that $u_{xx}=qu$, we obtain%
\begin{equation*}
f^{\prime }\left( x\right) =u^{2}\left( 2y^{2}+2q+xq_{x}\right)
=u^{2}\left\{ 2y^{2}+2\left( \frac{\nu ^{2}-\frac{1}{4}}{x^{2}}-y^{2}\right)
-2\frac{\nu ^{2}-\frac{1}{4}}{x^{2}}\right\} =0.
\end{equation*}%
Since $u(1,\cdot )\equiv 0$, we see that $f\equiv f(1)=-\left(
u_{x}(1,y)\right) ^{2}$. Further, 
\begin{equation}
u_{x}(1,y)=y\left( J_{\nu }\left( y\right) Y_{\nu }^{\prime }\left( y\right)
-J_{\nu }^{\prime }\left( y\right) Y_{\nu }\left( y\right) \right) =\frac{2}{%
\pi },  \label{ux}
\end{equation}%
by Lemma \ref{JV}(iii). Since $u\left( x_{0},y_{0}\right) =0$, we conclude
that 
\begin{equation*}
2y_{0}^{2}\int_{1}^{x_{0}}\left\{ u\left( t,y_{0}\right) \right\}
^{2}dt-x_{0}\left( \frac{\partial u}{\partial x}\left( x_{0},y_{0}\right)
\right) ^{2}=f(x_{0})=f(1)=-\frac{4}{\pi ^{2}}<0.
\end{equation*}%
Thus, by (\ref{equxuy}), 
\begin{equation*}
y_{0}\frac{\partial u}{\partial x}\left( x_{0},y_{0}\right) \frac{\partial u%
}{\partial y}\left( x_{0},y_{0}\right) =2y_{0}^{2}\int_{1}^{x_{0}}\left\{
u\left( t,y_{0}\right) \right\} ^{2}dt<x_{0}\left( \frac{\partial u}{%
\partial x}\left( x_{0},y_{0}\right) \right) ^{2},
\end{equation*}%
and (\ref{eqbasic1}) follows.

Let $w=xu_{x}-yu_{y}$. Direct computation shows that $w_{xx}=qw$, since $%
2q+xq_{x}-yq_{y}=0$. For any fixed value of $y$ the expression $%
uw_{x}-u_{x}w $ thus has a constant value. Since $u(1,\cdot )\equiv 0$, and $%
w\left( 1,y\right) =u_{x}\left( 1,y\right) =2/\pi $ by (\ref{ux}), we
conclude that%
\begin{equation}
uw_{x}-u_{x}w=-\frac{4}{\pi ^{2}},  \label{ai}
\end{equation}%
which yields (\ref{wronk}) because $u(x_{0},y_{0})=0$.

Differentiation of (\ref{ai}) with respect to $y$ yields 
\begin{eqnarray*}
0 &=&u_{y}w_{x}+uw_{xy}-u_{xy}w-u_{x}w_{y} \\
&=&u_{y}(u_{x}+xu_{xx}-yu_{xy})+uw_{xy}-u_{xy}\left( xu_{x}-yu_{y}\right)
-u_{x}\left( xu_{xy}-u_{y}-yu_{yy}\right) \\
&=&2u_{y}u_{x}+xu_{xx}u_{y}+uw_{xy}-2u_{xy}xu_{x}+u_{x}yu_{yy} \\
&=&u\left( xqu_{y}+w_{xy}\right) +u_{x}\left( 2u_{y}-2xu_{xy}+yu_{yy}\right)
,
\end{eqnarray*}%
since $u_{xx}=qu$. This simplifies to (\ref{P13}) because $u\left(
x_{0},y_{0}\right) =0$ and $u_{x}(x_{0},y_{0})\neq 0$.
\end{proof}

\bigskip

If $x>1$, then we see from (\ref{equxuy}) that the function $y\longmapsto
u_{\nu }\left( x,y\right) $ has only simple zeros on $(0,\infty )$. We
define $y_{\nu ,k}\left( x\right) >0$ to be the $k$th positive zero. (When $%
x=b$ these correspond to the zeros $\rho _{\nu ,k}$ defined in Section 2.)
Further, in view of (\ref{equxuy}), the implicit function theorem can be
applied to the function $u_{\nu }:\left( 1,\infty \right) \times \left(
0,\infty \right) \rightarrow \mathbb{R}$ to see that $y_{\nu ,k}$ is
differentiable on $\left( 1,\infty \right) $, and so we can differentiate
the equation $u_{\nu }\left( x,y_{\nu ,k}\left( x\right) \right) =0$ to
obtain 
\begin{equation}
y_{\nu ,k}^{\prime }\left( x\right) =-\dfrac{\partial u_{\nu }}{\partial x}%
\left( x,y_{\nu ,k}\left( x\right) \right) /\dfrac{\partial u_{\nu }}{%
\partial y}\left( x,y_{\nu ,k}\left( x\right) \right) <0,  \label{eqykprime}
\end{equation}%
by (\ref{equxuy}) again, whence $y_{\nu ,k}$ is strictly decreasing on $%
\left( 1,\infty \right) $. The following simple observation will help us to
show that $y_{\nu ,k}$ is also convex.

\begin{lemma}
\label{simple}Suppose that $u\left( x,y\right) $ is a function such that $%
u_{xx}=qu$, and $y_{k}$ is a differentiable function such that $u\left(
x,y_{k}\left( x\right) \right) =0$. If $%
u_{x}u_{y}(2u_{xy}u_{y}-u_{yy}u_{x})>0$ on the zero set of $u$, then $y_{k}$
is convex.
\end{lemma}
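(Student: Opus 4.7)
The plan is to compute $y_k''$ explicitly by implicitly differentiating the identity $u(x,y_k(x)) = 0$ twice, then to substitute the hypothesis $u_{xx} = qu$ (which vanishes on the zero set of $u$) and rearrange to match the sign condition.

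First I would differentiate $u(x,y_k(x)) = 0$ once with respect to $x$ to get $u_x + u_y y_k' = 0$, so
\begin{equation*}
y_k'(x) = -\frac{u_x}{u_y}\bigl(x,y_k(x)\bigr),
\end{equation*}
which is well defined because on the zero set $u_y \neq 0$ (the factor $u_x u_y(\cdots)$ in the hypothesis is nonzero there). Differentiating once more yields
\begin{equation*}
u_{xx} + 2 u_{xy}\, y_k' + u_{yy}\, (y_k')^2 + u_y\, y_k'' = 0.
\end{equation*}

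Now I would use $u_{xx} = qu = 0$ at points where $u = 0$, then substitute $y_k' = -u_x/u_y$ and multiply through by $u_y^2$ to obtain
\begin{equation*}
u_y^3\, y_k'' = 2 u_{xy} u_x u_y - u_{yy} u_x^2 = u_x\bigl(2 u_{xy} u_y - u_{yy} u_x\bigr).
\end{equation*}
Multiplying numerator and denominator by $u_y$ gives
\begin{equation*}
y_k''(x) = \frac{u_x u_y\bigl(2 u_{xy} u_y - u_{yy} u_x\bigr)}{u_y^4}\bigg|_{(x,y_k(x))}.
\end{equation*}
Since $u_y^4 > 0$ and the numerator is positive on the zero set by hypothesis, we conclude $y_k'' > 0$, so $y_k$ is convex.

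The whole argument is a two-line implicit-differentiation calculation, so there is no real obstacle; the only subtlety is remembering to invoke $u_{xx} = qu = 0$ on $\{u=0\}$ before rearranging, and multiplying through by $u_y$ to recast the denominator as $u_y^4$ so that the sign of $y_k''$ is controlled purely by the hypothesized quantity $u_x u_y(2 u_{xy} u_y - u_{yy} u_x)$.
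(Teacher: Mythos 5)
Your proof is correct and follows essentially the same route as the paper: implicit differentiation of $u(x,y_k(x))=0$, using $u_{xx}=qu=0$ on the zero set, and substituting $y_k'=-u_x/u_y$ to express $y_k''$ as $u_xu_y(2u_{xy}u_y-u_{yy}u_x)/u_y^4$. The paper differentiates the quotient $-u_x/u_y$ directly rather than differentiating the identity twice, but the computation and conclusion are identical.
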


\begin{proof}
We know that $y_{k}^{\prime }u_{y}\left( x,y_{k}\left( x\right) \right)
=-u_{x}\left( x,y_{k}\left( x\right) \right) $, and $u_{xx}=qu=0$ on the
zero set of $u$, so%
\begin{eqnarray*}
y_{k}^{\prime \prime }\left( x\right) &=&-\frac{d}{dx}\left( \frac{u_{x}}{%
u_{y}}\left( x,y_{k}\left( x\right) \right) \right) =-\frac{%
u_{xy}y_{k}^{\prime }u_{y}-u_{x}\left( u_{xy}+u_{yy}y_{k}^{\prime }\right) }{%
u_{y}^{2}}\left( x,y_{k}\left( x\right) \right) \\
&=&\left( \frac{u_{x}}{u_{y}}\frac{2u_{xy}u_{y}-u_{yy}u_{x}}{u_{y}^{2}}%
\right) \left( x,y_{k}\left( x\right) \right) >0.
\end{eqnarray*}
\end{proof}

\bigskip

The following result will be proved in Section \ref{append}.

\begin{proposition}
\label{ThmDE} Let $\nu \geq 
%TCIMACRO{\U{bd}}%
%BeginExpansion
{\frac12}%
%EndExpansion
$. Then, for each $x>1$ the cross product $y\longmapsto u_{\nu }\left(
x,y\right) $ satisfies a second order differential equation, $\widetilde{P}%
F^{\prime \prime }-\widetilde{P}^{\prime }F^{\prime }+\widetilde{Q}F=0$,
where $\widetilde{P}\left( x,y\right) >0$ and $\widetilde{P}^{\prime }\left(
x,y\right) <0$.
\end{proposition}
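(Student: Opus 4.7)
The plan is to produce $\widetilde{P}$ and $\widetilde{Q}$ explicitly by passing to the amplitude-phase representation of Bessel functions. Introduce a phase $\phi_\nu$ through the polar decomposition $J_\nu(t)=\sqrt{N_\nu(t)}\cos\phi_\nu(t)$ and $Y_\nu(t)=\sqrt{N_\nu(t)}\sin\phi_\nu(t)$; a short calculation using Lemma~\ref{JV}(iii) gives $\phi_\nu'(t)=2/(\pi tN_\nu(t))>0$. Applying the sine subtraction formula to the cross product yields
\begin{equation*}
u_\nu(x,y)=R(y)\sin\Phi(y),\qquad R(y):=x^{1/2}\sqrt{N_\nu(y)N_\nu(xy)},\ \Phi(y):=\phi_\nu(xy)-\phi_\nu(y).
\end{equation*}
A direct substitution of $F=R\sin\Phi$ into the expression $\widetilde{P}F''-\widetilde{P}'F'$ shows that, with the choice $\widetilde{P}:=R^2\Phi'$, the $\cos\Phi$ contribution cancels identically, so $F$ satisfies an equation of the advertised form $\widetilde{P}F''-\widetilde{P}'F'+\widetilde{Q}F=0$ for a uniquely determined $\widetilde{Q}$.

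Inserting $\phi_\nu'(t)=2/(\pi tN_\nu(t))$ into $\Phi'(y)=x\phi_\nu'(xy)-\phi_\nu'(y)$ and multiplying by $R^2=xN_\nu(y)N_\nu(xy)$, the amplitude factors telescope into the compact formula
\begin{equation*}
\widetilde{P}(x,y)=\frac{2x}{\pi y}\bigl[N_\nu(y)-N_\nu(xy)\bigr].
\end{equation*}
Positivity $\widetilde{P}(x,y)>0$ is then immediate from $x>1$, $y>0$, and the strict decrease of $N_\nu$ on $(0,\infty)$ supplied by Lemma~\ref{JV}(v). For the strict negativity of $\widetilde{P}'$, a quotient-rule computation groups the $y$-derivative naturally as
\begin{equation*}
\frac{\partial\widetilde{P}}{\partial y}(x,y)=\frac{2x}{\pi y^2}\bigl[\Psi(y)-\Psi(xy)\bigr],\qquad \Psi(t):=tN_\nu'(t)-N_\nu(t).
\end{equation*}
Since $xy>y$, the required inequality becomes $\Psi(y)<\Psi(xy)$, which is equivalent to strict monotonicity of $\Psi$; and as $\Psi'(t)=tN_\nu''(t)$, this is in turn equivalent to the strict convexity of $N_\nu$ on $(0,\infty)$.

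The convexity of $N_\nu$ is the step I expect to be the main obstacle, since it goes beyond the elementary monotonicity facts collected in Lemma~\ref{JV}. I would settle it via Nicholson's classical integral representation
\begin{equation*}
N_\nu(t)=\frac{8}{\pi^2}\int_0^{\infty}K_0(2t\sinh u)\cosh(2\nu u)\,du,
\end{equation*}
where $K_0$ is the modified Bessel function of the second kind of order zero. Differentiating twice under the integral (justified by the exponential decay of $K_0$ and $K_1$) exhibits $N_\nu''(t)$ as a positive multiple of $\int_0^{\infty}K_0''(2t\sinh u)\sinh^2u\,\cosh(2\nu u)\,du$, so strict convexity of $N_\nu$ reduces to $K_0''(z)>0$ on $(0,\infty)$. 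But the modified Bessel equation $z^2K_0''+zK_0'-z^2K_0=0$ combined with $K_0'=-K_1$ gives $K_0''(z)=K_0(z)+K_1(z)/z$, which is strictly positive because $K_0$ and $K_1$ are positive on $(0,\infty)$. This closes the chain $N_\nu''>0\Rightarrow \Psi\text{ increasing}\Rightarrow \widetilde{P}'<0$ and finishes the proposition.
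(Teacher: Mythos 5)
Your argument is correct, but it reaches the conclusion by a genuinely different route from the paper's. The paper derives the differential equation purely algebraically: it first proves a general lemma expressing $F'$ and $F''$ for $F=af+bg$ in terms of the Wronskians $W=fg'-f'g$, $w=ab'-a'b$ and the moduli $N=f^{2}+g^{2}$, $n=a^{2}+b^{2}$, eliminates $f,g$ to obtain $\widetilde{P}F''-\widetilde{P}'F'+\widetilde{Q}F=0$ with $\widetilde{P}=Wn-Nw$, and only then specialises to Bessel functions to find $\widetilde{P}=\tfrac{2}{\pi y}\left(N_{\nu }(y)-N_{\nu }(xy)\right)$ --- the same function you obtain, up to the harmless factor $x$ (constant in $y$). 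Your Pr\"{u}fer-type decomposition $u_{\nu }=R\sin \Phi $ with $\widetilde{P}=R^{2}\Phi '$ arrives at the same point more directly, and the cancellation of the $\cos \Phi $ terms that you invoke does check out. For the sign of $\widetilde{P}'$ the two proofs genuinely diverge: the paper writes $\widetilde{P}'=-\widetilde{P}/y+\tfrac{2}{\pi y^{2}}\left(yN_{\nu }'(y)-xyN_{\nu }'(xy)\right)$ and quotes Watson/Landau for the fact that $t\mapsto tN_{\nu }'(t)$ is increasing when $\nu \geq \tfrac{1}{2}$ (deduced from monotonicity of $(d/dt)(tN_{\nu }(t))$ together with $N_{\nu }'\leq 0$), whereas you group the terms into $\Psi (y)-\Psi (xy)$ and reduce to convexity of $N_{\nu }$, which you prove from Nicholson's integral via $K_{0}''=K_{0}+K_{1}/z>0$. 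Your route is self-contained modulo Nicholson's formula and, notably, never uses $\nu \geq \tfrac{1}{2}$, so it actually yields the proposition for all $\nu \geq 0$. Two small points to tighten: when differentiating under Nicholson's integral, the delicate issue is near $u=0$, where $K_{0}''(z)=O(z^{-2})$ is exactly tamed by the factor $\sinh ^{2}u$ (the exponential decay only controls $u\rightarrow \infty $); and you should remark that the phase $\phi _{\nu }$ admits a smooth single-valued branch precisely because $N_{\nu }>0$ on $(0,\infty )$.
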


We now prove a result that contains Theorem \ref{app}.

\begin{theorem}
\label{ThmMain2}If $\nu \geq 
%TCIMACRO{\U{bd}}%
%BeginExpansion
{\frac12}%
%EndExpansion
$,\ then the zero curves $y_{\nu ,k}:\left( 1,\infty \right) \rightarrow
\left( 0,\infty \right) $ are convex, and 
\begin{equation}
y_{\nu ,k+1}\left( x\right) -y_{\nu ,k}\left( x\right) >\frac{\pi }{2x-1}%
\text{ \ \ \ }\left( k\geq 2\right) .  \label{spac}
\end{equation}
\end{theorem}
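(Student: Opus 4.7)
The proof splits naturally into establishing convexity of each $y_{\nu,k}$, and then the spacing bound~(\ref{spac}).

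\textbf{Convexity.} The plan is to apply Lemma~\ref{simple}, so the task reduces to verifying $u_x u_y(2u_{xy}u_y - u_{yy}u_x) > 0$ at every zero $(x_0,y_0)$ of $u_\nu$ with $x_0>1$. At such a zero, Proposition~\ref{ThmDE} applied to $F(y) = u_\nu(x_0,y)$ collapses its differential equation to $\widetilde{P}\,u_{yy} = \widetilde{P}'\,u_y$, so $u_y u_{yy} < 0$ because $\widetilde{P} > 0$ and $\widetilde{P}' < 0$; equation~(\ref{equxuy}) supplies $u_x u_y > 0$. Eliminating $u_{xy}$ by means of~(\ref{P13}) and $x_0 u_x - y_0 u_y$ by means of~(\ref{wronk}), the quantity of interest reduces to
\[
u_x u_y\,(2u_{xy}u_y - u_{yy}u_x) \;=\; \frac{1}{x_0}\Bigl(2\,u_x u_y^3 \;-\; \frac{4\,u_y u_{yy}}{\pi^{2}}\Bigr),
\]
both of whose summands on the right are positive. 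Lemma~\ref{simple} then yields strict convexity of $y_{\nu,k}$.

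\textbf{Spacing.} Fix $k\geq 2$ and $x_0 > 1$, and set $y_0 = y_{\nu,k}(x_0)$, $y_0' = y_{\nu,k+1}(x_0)$. Let $1 < x^{(1)} < x^{(2)} < \cdots$ be the zeros of $x \mapsto u_\nu(x,y_0)$ in $(1,\infty)$, and let $j$ be the index with $x^{(j)} = x_0$. The large-$y$ Bessel asymptotic
\[
u_\nu(x,y) \;\sim\; \frac{2}{\pi y}\,\sin\bigl((x-1)y\bigr) \qquad (y \to \infty)
\]
implies $y_{\nu,\ell}(x) \to \infty$ as $x \to 1^{+}$ for every $\ell$; in particular the decreasing function $y_{\nu,k-1}$ must cross the level $y_0$ at some point of $(1,x_0)$, so $j \geq 2$. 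Sturm's convexity inequality~(\ref{SCT}) then gives $x^{(j+1)} - x_0 < x_0 - x^{(j-1)} < x_0 - 1$, i.e.\ $x_2 := x^{(j+1)} < 2x_0 - 1$, while~(\ref{eqZeroDiff}) gives $x_2 - x_0 \geq \pi/y_0$. Since $(x_2,y_0)$ lies on the (strictly convex and strictly decreasing) curve $y = y_{\nu,k+1}(x)$, the tangent-line inequality at $x_2$ combined with~(\ref{eqbasic1}) yields
\[
y_0' - y_0 \;>\; \bigl|y_{\nu,k+1}'(x_2)\bigr|(x_2-x_0) \;>\; \frac{y_0}{x_2}(x_2-x_0) \;\geq\; \frac{\pi}{x_2} \;>\; \frac{\pi}{2x_0-1},
\]
which is~(\ref{spac}).

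\textbf{Main obstacle.} The most delicate step is the algebraic verification of the sign condition in Lemma~\ref{simple}: Proposition~\ref{ThmDE} provides only the qualitative information $\widetilde{P}>0$, $\widetilde{P}'<0$, so the identities~(\ref{P13}) and~(\ref{wronk}) must be combined in just the right way to convert this into a tractable positive expression. A secondary subtlety is the bridge from the $y$-index $k \geq 2$ to the $x$-index $j \geq 2$, which is what the boundary behaviour $y_{\nu,\ell}(1^{+}) = \infty$ is used for.
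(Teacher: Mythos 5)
Your proposal is correct and follows essentially the same route as the paper: convexity via Lemma \ref{simple} after combining (\ref{P13}), (\ref{wronk}) and (\ref{equxuy}) with the sign information from Proposition \ref{ThmDE} (your displayed identity is exactly the paper's), and the spacing bound via the tangent line to $y_{\nu,k+1}$ at the next $x$-zero together with (\ref{eqbasic1}), (\ref{eqZeroDiff}) and (\ref{SCT}). Your explicit justification that the relevant $x$-zero has a predecessor (via $y_{\nu,\ell}(x)\rightarrow\infty$ as $x\rightarrow 1^{+}$) is a point the paper passes over silently, and is a welcome, if slightly informally argued, addition.
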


\begin{proof}
On the zero set of $u_{\nu }$ we have, by (\ref{P13}) and then (\ref{wronk}%
), 
\begin{eqnarray*}
x\left( 2\frac{\partial ^{2}u_{\nu }}{\partial x\partial y}\frac{\partial
u_{\nu }}{\partial y}-\frac{\partial ^{2}u_{\nu }}{\partial y^{2}}\frac{%
\partial u_{\nu }}{\partial x}\right) &=&2\left( \frac{\partial u_{\nu }}{%
\partial y}\right) ^{2}+\frac{\partial ^{2}u_{\nu }}{\partial y^{2}}\left( y%
\frac{\partial u_{\nu }}{\partial y}-x\frac{\partial u_{\nu }}{\partial x}%
\right) \\
&=&2\left( \frac{\partial u_{\nu }}{\partial y}\right) ^{2}-\frac{\partial
^{2}u_{\nu }}{\partial y^{2}}\frac{4}{\pi ^{2}}\left\{ \frac{\partial u_{\nu
}}{\partial x}\right\} ^{-1},
\end{eqnarray*}%
whence%
\begin{equation*}
\frac{\partial u_{\nu }}{\partial x}\frac{\partial u_{\nu }}{\partial y}%
\left( 2\frac{\partial ^{2}u_{\nu }}{\partial x\partial y}\frac{\partial
u_{\nu }}{\partial y}-\frac{\partial ^{2}u_{\nu }}{\partial y^{2}}\frac{%
\partial u_{\nu }}{\partial x}\right) =\frac{2}{x}\frac{\partial u_{\nu }}{%
\partial x}\left( \frac{\partial u_{\nu }}{\partial y}\right) ^{3}-\frac{%
\partial ^{2}u_{\nu }}{\partial y^{2}}\frac{\partial u_{\nu }}{\partial y}%
\frac{4}{\pi ^{2}x}.
\end{equation*}%
The first term on the right hand side is positive, by (\ref{equxuy}), and
the second is negative, by Proposition \ref{ThmDE}. Hence $y_{k}$ is convex,
by Lemma \ref{simple}.

Let $y_{0}=y_{\nu ,k}\left( x_{0}\right) $ be given, where $x_{0}>1$. Then $%
x_{0}$ is the $k$th zero of $x\longmapsto u_{\nu }\left( x,y_{0}\right) $ in 
$(1,\infty )$, so $x_{0}=x_{\nu ,k}\left( y_{0}\right) .$ We now consider
the next zero, $x_{\nu ,k+1}\left( y_{0}\right) $. By the convexity of $%
y_{\nu ,k+1}$,%
\begin{equation*}
y_{\nu ,k}\left( x_{0}\right) +\left\{ x_{\nu ,k}(y_{0})-x_{\nu
,k+1}(y_{0})\right\} y_{\nu ,k+1}^{\prime }\left( x_{\nu ,k+1}(y_{0})\right)
\leq y_{\nu ,k+1}\left( x_{0}\right) .
\end{equation*}%
We use (\ref{eqykprime}), (\ref{eqbasic1}) and (\ref{eqZeroDiff}) to deduce
that 
\begin{eqnarray*}
y_{\nu ,k+1}\left( x_{0}\right) -y_{\nu ,k}\left( x_{0}\right) &\geq &\left(
x_{\nu ,k+1}\left( y_{0}\right) -x_{\nu ,k}\left( y_{0}\right) \right) \frac{%
\dfrac{\partial u_{\nu }}{\partial x}\left( x_{\nu ,k+1}\left( y_{0}\right)
,y_{0}\right) }{\dfrac{\partial u_{\nu }}{\partial y}\left( x_{\nu
,k+1}\left( y_{0}\right) ,y_{0}\right) } \\
&\geq &\left( x_{\nu ,k+1}\left( y_{0}\right) -x_{\nu ,k}\left( y_{0}\right)
\right) \frac{y_{0}}{x_{\nu ,k+1}(y_{0})} \\
&\geq &\frac{\pi }{x_{\nu ,k+1}(y_{0})}.
\end{eqnarray*}%
Finally, by (\ref{SCT}), 
\begin{equation*}
x_{\nu ,k+1}\left( y_{0}\right) <2x_{\nu ,k}\left( y_{0}\right) -x_{\nu
,k-1}\left( y_{0}\right) <2x_{\nu ,k}\left( y_{0}\right) -1=2x_{0}-1\text{\
\ \ }(k\geq 2),
\end{equation*}%
so we arrive at (\ref{spac}).
\end{proof}

\section{Proof of Proposition \protect\ref{ThmDE}\label{append}}

Let 
\begin{equation}
F\left( y\right) =a\left( y\right) f\left( y\right) +b\left( y\right)
g\left( y\right) ,  \label{eqFF}
\end{equation}%
where 
\begin{equation}
f\left( y\right) =Y_{\nu }\left( xy\right) ,\text{ \ }g\left( y\right)
=-J_{\nu }\left( xy\right) ,\text{ \ }a\left( y\right) =J_{\nu }\left(
y\right) ,\text{ \ }b\left( y\right) =Y_{\nu }\left( y\right) ,  \label{Bes}
\end{equation}%
and $x>1$ is fixed. We will show that functions of the form (\ref{eqFF})
satisfy a certain second order differential equation, and that when (\ref%
{Bes}) holds the signs of the coefficients in this equation are as described
in Proposition \ref{ThmDE}.

Let $f,g,a,b$ be differentiable functions defined on $\left( c,d\right) $,
and let%
\begin{equation*}
W=fg^{\prime }-f^{\prime }g,\text{ \ \ }N=f^{2}+g^{2},\text{ \ \ }%
w=ab^{\prime }-a^{\prime }b\text{, \ \ }n=a^{2}+b^{2}\text{.}
\end{equation*}

\begin{lemma}
\label{PropABCD}If $F=af+bg$, then%
\begin{equation*}
F^{\prime }=Af+Bg\text{ \ and \ }F^{\prime \prime }=Cf+Dg,
\end{equation*}%
where 
\begin{eqnarray*}
A &=&a^{\prime }+\frac{aN^{\prime }}{2N}+\frac{bW}{N}\text{ \ and }%
B=b^{\prime }+\frac{bN^{\prime }}{2N}-\frac{aW}{N}, \\
C &=&A^{\prime }+\frac{AN^{\prime }}{2N}+\frac{BW}{N}\text{ \ and \ }%
D=B^{\prime }+\frac{BN^{\prime }}{2N}-\frac{AW}{N}.
\end{eqnarray*}
\end{lemma}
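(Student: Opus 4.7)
The proof is a direct calculation that hinges on expressing $f'$ and $g'$ as linear combinations of $f$ and $g$ themselves; the coefficients $A,B,C,D$ are then obtained by two applications of the same substitution.

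First I would solve the two-equation linear system
\begin{align*}
f\,f' + g\,g' &= \tfrac{1}{2}N', \\
-g\,f' + f\,g' &= W,
\end{align*}
whose coefficient determinant is $f^{2}+g^{2}=N$. Cramer's rule (or direct elimination) immediately yields
\begin{equation*}
f' = \frac{fN'}{2N} - \frac{gW}{N}, \qquad g' = \frac{gN'}{2N} + \frac{fW}{N}.
\end{equation*}
These identities are the one nontrivial observation in the proof; they hold on the open set where $N>0$, which is no restriction in the applications the authors have in mind.

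Next I would differentiate $F=af+bg$ in the obvious way, $F'=a'f+b'g+af'+bg'$, and substitute the displayed formulas for $f',g'$. Collecting the coefficients of $f$ and $g$ gives exactly
\begin{equation*}
F' = \left(a'+\tfrac{aN'}{2N}+\tfrac{bW}{N}\right)f + \left(b'+\tfrac{bN'}{2N}-\tfrac{aW}{N}\right)g = Af+Bg,
\end{equation*}
which is the first stated identity. This calculation is the heart of the lemma and is entirely routine.

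Finally, since $F'=Af+Bg$ has the same structural form as $F=af+bg$ (with $A,B$ in place of $a,b$), I would apply the identical argument to $F'$, differentiating once more and substituting the same formulas for $f',g'$. This produces
\begin{equation*}
F'' = \left(A'+\tfrac{AN'}{2N}+\tfrac{BW}{N}\right)f + \left(B'+\tfrac{BN'}{2N}-\tfrac{AW}{N}\right)g = Cf+Dg,
\end{equation*}
which is the second stated identity. There is no real obstacle here — the only point requiring a moment of thought is recognizing that the pair $(W,N')$ contains precisely the information needed to invert the system for $(f',g')$; once that is written down, the rest is bookkeeping.
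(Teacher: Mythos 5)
Your proposal is correct and follows essentially the same route as the paper: both proofs rest on the identities $f'=\frac{N'}{2N}f-\frac{W}{N}g$ and $g'=\frac{N'}{2N}g+\frac{W}{N}f$ (the paper obtains them by direct algebraic manipulation of $2Nf'-N'f$ and $2Ng'-N'g$ rather than by Cramer's rule, but this is only a cosmetic difference), followed by substitution into $F'=a'f+b'g+af'+bg'$ and iteration of the same argument on $F'=Af+Bg$.
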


\begin{proof}
Since 
\begin{equation*}
2Nf^{\prime }-N^{\prime }f=2g^{2}f^{\prime }-2g^{\prime }gf=-2gW\text{ \ and
\ }2Ng^{\prime }-N^{\prime }g=2f^{2}g^{\prime }-2f^{\prime }fg=2fW,
\end{equation*}%
we see that%
\begin{equation*}
f^{\prime }=\frac{N^{\prime }}{2N}f-\frac{W}{N}g\text{ \ \ and \ \ }%
g^{\prime }=\frac{N^{\prime }}{2N}g+\frac{W}{N}f,
\end{equation*}%
and so 
\begin{equation*}
F^{\prime }=a^{\prime }f+af^{\prime }+b^{\prime }g+bg^{\prime }=f\left(
a^{\prime }+\frac{aN^{\prime }}{2N}+\frac{bW}{N}\right) +g\left( b^{\prime }+%
\frac{bN^{\prime }}{2N}-\frac{aW}{N}\right) .
\end{equation*}%
Thus $F^{\prime }=Af+Bg$. The same reasoning, applied to $F^{\prime }$,
shows that$\ F^{\prime \prime }=Cf+Dg$.
\end{proof}

\begin{proposition}
Let $f,g,a,b$ be smooth functions. Then the function $F=af+bg$ satisfies the
differential equation%
\begin{equation}
\widetilde{P}F^{\prime \prime }-\widetilde{P}^{\prime }F^{\prime }+%
\widetilde{Q}F=0,  \label{eqDEQ}
\end{equation}%
where $\widetilde{P}=Wn-Nw$ and $\widetilde{Q}=N\left( CB-DA\right) $.
\end{proposition}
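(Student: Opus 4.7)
The plan is to verify the proposition as a direct algebraic consequence of Lemma \ref{PropABCD}. Substituting $F = af + bg$, $F' = Af + Bg$, $F'' = Cf + Dg$ into the claimed ODE and equating the coefficients of $f$ and $g$ separately to zero produces the pair
\begin{align*}
\widetilde{P}C - \widetilde{P}'A + \widetilde{Q}a &= 0, \\
\widetilde{P}D - \widetilde{P}'B + \widetilde{Q}b &= 0.
\end{align*}
Forming $b\cdot(\text{first}) - a\cdot(\text{second})$ eliminates $\widetilde{Q}$, while $B\cdot(\text{first}) - A\cdot(\text{second})$ eliminates $\widetilde{P}'$, reducing the proposition to the two identities $\widetilde{P}(bC - aD) = \widetilde{P}'(bA - aB)$ and $\widetilde{Q}(bA - aB) = \widetilde{P}(BC - AD)$. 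The second of these is automatic from the definition $\widetilde{Q} = N(CB - DA)$ once the relation $bA - aB = \widetilde{P}/N$ has been established. Hence the whole proposition reduces to proving the two closed-form identities
\begin{equation*}
bA - aB = \widetilde{P}/N \quad\text{and}\quad bC - aD = \widetilde{P}'/N.
\end{equation*}

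The first identity is a short check: substituting the formulas for $A$ and $B$ from Lemma \ref{PropABCD}, the terms $ba' - ab' = -w$ survive, the $N'/(2N)$ contributions cancel antisymmetrically, and the $W/N$ contributions combine into $W(a^2 + b^2)/N = Wn/N$. Thus $bA - aB = -w + Wn/N = (Wn - Nw)/N = \widetilde{P}/N$.

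The second identity is the algebraic heart of the argument and is where I expect the main obstacle. From $C = A' + AN'/(2N) + BW/N$ and $D = B' + BN'/(2N) - AW/N$ I would compute
\begin{equation*}
bC - aD = (bA' - aB') + \frac{N'}{2N}(bA - aB) + \frac{W}{N}(aA + bB),
\end{equation*}
after observing that the $BW/N$ and $-AW/N$ cross-terms produce $W(aA + bB)/N$. The inner product $aA + bB$ simplifies to $(aa' + bb') + (a^2 + b^2)N'/(2N) = (nN)'/(2N)$, using $aa' + bb' = n'/2$. The term $bA' - aB'$ is treated by writing it as $(bA - aB)' - (b'A - a'B) = (\widetilde{P}/N)' - (N'w + Wn')/(2N)$, with the second piece obtained by the same symmetry-cancellation pattern as in the first identity. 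Substituting all of these back and expanding $(\widetilde{P}/N)' = \widetilde{P}'/N - \widetilde{P}N'/N^2$, the $Wn'/(2N)$ contributions cancel, and the residual coefficient of $N'/(2N^2)$ simplifies to $-\widetilde{P} - Nw + Wn$, which vanishes by the very definition of $\widetilde{P}$. This yields $bC - aD = \widetilde{P}'/N$ and completes the proof. The calculation is purely mechanical but requires careful bookkeeping of several competing terms, which is why I anticipate it to be the most error-prone step.
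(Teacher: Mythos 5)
Your proposal is correct and takes essentially the same route as the paper: both arguments reduce the claim to the two identities $N(bA-aB)=\widetilde{P}$ and $N(bC-aD)=\widetilde{P}^{\prime}$ (the paper's $N(Ab-Ba)=\widetilde{P}$ and $N(Cb-Da)=\widetilde{P}^{\prime}$) and verify them by direct computation from Lemma \ref{PropABCD}, and your bookkeeping for the second identity checks out. The only loose point is that your ``elimination'' step runs in the wrong logical direction as stated, but this is harmless, since the two closed-form identities together with the definition of $\widetilde{Q}$ imply both coefficient equations identically, via the same trivial determinant relation the paper invokes.
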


\begin{proof}
We know from Lemma \ref{PropABCD} that $F^{\prime }=Af+Bg$ and $F^{\prime
\prime }=Cf+Dg.$ Hence 
\begin{equation}
\left( Ab-Ba\right) F^{\prime \prime }-\left( Cb-Da\right) F^{\prime
}+\left( CB-DA\right) F=0,  \label{eqDEG2}
\end{equation}%
because (trivially) 
\begin{eqnarray*}
\left( Ab-Ba\right) C-\left( Cb-Da\right) A+\left( CB-DA\right) a &=&0, \\
\left( Ab-Ba\right) D-\left( Cb-Da\right) B+\left( CB-DA\right) b &=&0.
\end{eqnarray*}%
Since%
\begin{equation}
Ab-Ba=a^{\prime }b-b^{\prime }a+\frac{W}{N}\left( b^{2}+a^{2}\right) ,
\label{eqAbBa}
\end{equation}%
we see that 
\begin{equation*}
N\left( Ab-Ba\right) =W\left( a^{2}+b^{2}\right) -N\left( ab^{\prime
}-a^{\prime }b\right) =\widetilde{P},
\end{equation*}%
and we can multiply across (\ref{eqDEG2}) by $N$ to get 
\begin{equation*}
\widetilde{P}F^{\prime \prime }-N\left( Cb-Da\right) F^{\prime }+N\left(
CB-DA\right) F=0.
\end{equation*}

It remains to check that $N\left( Cb-Da\right) =\widetilde{P}^{\prime }$.
Since 
\begin{equation*}
N\left( Cb-Da\right) =N\left( A^{\prime }b-B^{\prime }a\right) +\frac{%
N^{\prime }}{2}\left( Ab-Ba\right) +W\left( Bb+Aa\right)
\end{equation*}%
and 
\begin{equation*}
\widetilde{P}^{\prime }=N^{\prime }\left( Ab-Ba\right) +N\left( A^{\prime
}b-B^{\prime }a\right) +N\left( Ab^{\prime }-Ba^{\prime }\right) ,
\end{equation*}%
we see that $N\left( Cb-Da\right) -\widetilde{P}^{\prime }=-\Delta _{1}$,
where 
\begin{equation*}
\Delta _{1}=\frac{N^{\prime }}{2}\left( Ab-Ba\right) +N\left( Ab^{\prime
}-Ba^{\prime }\right) -W\left( Bb+Aa\right) ,
\end{equation*}%
and it suffices to show that $\Delta _{1}=0$. We compute 
\begin{eqnarray*}
Ab^{\prime }-Ba^{\prime } &=&\frac{N^{\prime }}{2N}\left( ab^{\prime
}-a^{\prime }b\right) +\frac{W}{N}\left( a^{\prime }a+b^{\prime }b\right) ,
\\
Bb+Aa &=&b^{\prime }b+a^{\prime }a+\frac{N^{\prime }}{2N}\left(
b^{2}+a^{2}\right) ,
\end{eqnarray*}%
and use these identities along with (\ref{eqAbBa})\ to obtain 
\begin{eqnarray*}
\Delta _{1} &=&\frac{N^{\prime }}{2}\left( a^{\prime }b-b^{\prime }a\right) +%
\frac{N^{\prime }}{2}\frac{W}{N}\left( a^{2}+b^{2}\right) +\frac{N^{\prime }%
}{2}\left( ab^{\prime }-a^{\prime }b\right) +W\left( a^{\prime }a+b^{\prime
}b\right) \\
&&\text{ }-W\left( b^{\prime }b+a^{\prime }a+\frac{N^{\prime }}{2N}\left(
b^{2}+a^{2}\right) \right) =0.
\end{eqnarray*}
\end{proof}

\bigskip

\begin{proof}[Proof of Proposition \protect\ref{ThmDE}]
We apply the preceding proposition to the case where (\ref{Bes}) holds. Then 
$N(y)=N_{\nu }\left( xy\right) $, $n\left( y\right) =N_{\nu }\left( y\right) 
$, 
\begin{equation*}
W\left( y\right) =xJ_{\nu }\left( xy\right) Y_{\nu }^{\prime }\left(
xy\right) -xJ_{\nu }^{\prime }\left( xy\right) Y_{\nu }\left( xy\right) =x%
\frac{2}{\pi xy}=\frac{2}{\pi y}
\end{equation*}%
by Lemma \ref{JV}(iii), and similarly $w\left( y\right) =2/(\pi y)$.\
Further, 
\begin{equation*}
\widetilde{P}\left( x,y\right) =Wn-wN=\frac{2}{\pi y}\left( N_{\nu }\left(
y\right) -N_{\nu }\left( xy\right) \right) >0
\end{equation*}%
by Lemma \ref{JV}(v), and 
\begin{equation*}
\widetilde{P}^{\prime }\left( x,y\right) =-\frac{\widetilde{P}\left(
x,y\right) }{y}+\frac{2}{\pi y^{2}}\left( yN_{\nu }^{\prime }\left( y\right)
-yxN_{\nu }^{\prime }\left( xy\right) \right) <0,
\end{equation*}%
since $y\longmapsto yN_{\nu }^{\prime }(y)$ is increasing. (It is clear from
p.446 of \cite{Wat} that $(d/dt)(tN_{\nu }(t))$ is increasing when $\nu \geq 
\frac{1}{2}$, and we also know that $N_{\nu }$ is decreasing.) Proposition %
\ref{ThmDE} is now established, because $u_{\nu }(x,y)=\sqrt{x}F(y)$.
\end{proof}

\bigskip

\bigskip

\noindent \textit{Stephen J. Gardiner}

\noindent School of Mathematics and Statistics,

University College Dublin,

Belfield, Dublin 4, Ireland.

email: stephen.gardiner@ucd.ie

\bigskip

\noindent \textit{Hermann Render}

\noindent School of Mathematics and Statistics,

University College Dublin,

Belfield, Dublin 4, Ireland.

email: hermann.render@ucd.ie


\begin{thebibliography}{99}
\bibitem{AG} D. H. Armitage and S. J. Gardiner, \textit{Classical potential
theory}, Springer, London, 2001.

\bibitem{Car} H. S. Carslaw, \textit{Integral equations and the
determination of Green's functions in the theory of potential}, Proc. Edinb.
Math. Soc. \textbf{31} (1913) 71--89.

\bibitem{Coc} J.A. Cochran, \textit{Remarks on the zeros of cross-product
Bessel functions}, J. Soc. Indust. Appl. Math. \textbf{12} (1964), 580--587.

\bibitem{Dur} P. Duren, \textit{Invitation to classical analysis,} Amer.
Math. Soc., Providence, RI, 2012.

\bibitem{EK} P. Ebenfelt and D. Khavinson, \textit{On point to point
reflection of harmonic functions across real-analytic hypersurfaces in }$%
\mathbb{R}^{n}$, J. Anal. Math. \textbf{68} (1996) 145--182.

\bibitem{GR1} S. J. Gardiner and H. Render, \textit{Harmonic functions which
vanish on a cylindrical surface}, J. Math. Anal. Appl. \textbf{433} (2016),
1870-1882.

\bibitem{GR2} S. J. Gardiner and H. Render, \textit{A reflection result for
harmonic functions which vanish on a cylindrical surface}, J. Math. Anal.
Appl. \textbf{443} (2016), 81--91.

\bibitem{GMM} A. Gray, G.B. Mathews and T.M. MacRobert, \textit{A Treatise
on Bessel Functions and their Applications to Physics.} 2nd Ed. MacMillan,
London, 1922.

\bibitem{GT} M. Grossi and F. Takahashi, \textit{On the location of two
blowup points on an annulus for the mean field equation}, C. R. Math. Acad.
Sci. Paris \textbf{352} (2014), no. 7-8, 615--619.

\bibitem{GV} M. Grossi and D. Vujadinovic, \textit{On the Green function of
the annulus}, Anal. Theory Appl. \textbf{32 }(2016), 52-64.

\bibitem{Hic} D. M. Hickey, \textit{The equilibrium point of Green's
function for an annular region,} Ann. of Math. (2) \textbf{30} (1928/29),
373--383.

\bibitem{Kal} H. Kalf, \textit{On the expansion of a function in terms of
spherical harmonics in arbitrary dimensions}, Bull. Belg. Math. Soc. Simon
Stevin \textbf{2} (1995) 361--380.

\bibitem{Kh96} D. Khavinson, \textit{Holomorphic partial differential
equations and classical potential theory}, Universidad de La Laguna,
Departamento de An\'{a}lisis Matem\'{a}tico, La Laguna, 1996.

\bibitem{KS} D. Khavinson and H. S. Shapiro, \textit{Remarks on the
reflection principle for harmonic functions}, J. Analyse Math. \textbf{54}
(1990), 60--76.

\bibitem{Lan} L. J. Landau, \textit{Bessel functions: monotonicity and bounds%
}, J. London Math. Soc. (2) \textbf{61} (2000) 197--215.

\bibitem{McM} J. McMahon, \textit{On the roots of the Bessel and certain
related functions}, Ann. of Math. \textbf{9} (1895), 23--30.

\bibitem{SMM} E. Sorolla, J. R. Mosig and M. Mattes, \textit{Algorithm to
calculate a large number of roots of the cross-product of Bessel functions},
IEEE Trans. on Antennas and Propagation, \textbf{61} (2013), 2180--2187.

\bibitem{SW} E. M. Stein and G. Weiss, \textit{Introduction to Fourier
Analysis on Euclidean spaces}, Princeton University Press, Princeton, N.J.,
1971.

\bibitem{Sze} G. Szeg\"{o}, \textit{Orthogonal Polynomials}, 4th Edition,
Amer. Math. Soc., Providence, RI, 1975.

\bibitem{Tit} E. C. Titchmarsh, \textit{Eigenfunction expansions associated
with second-order differential equations}, Part I. 2nd Ed. Clarendon Press,
Oxford, 1962.

\bibitem{Wat} G. N. Watson, \textit{A Treatise on the Theory of Bessel
Functions}, Cambridge Univ. Press, 1922.
\end{thebibliography}
\end{document}